\DeclareMathAlphabet{\mathpzc}{OT1}{pzc}{m}{it}
\newcounter{comptage}[part]
\newtheorem{lem}[comptage]{Lemma}
\newtheorem{theo}[comptage]{Theorem}
\newtheorem{defin}[comptage]{Definition}
\newtheorem{prop}[comptage]{Proposition}
\newtheorem{preremark}[comptage]{Remark}
\newenvironment{remark}{\begin{preremark}\rm}{\medskip \end{preremark}}
\numberwithin{equation}{section}
\newcommand{\Hh}{\mathcal H}
\newcommand{\R}{\mathbb R}
\newcommand{\N}{\mathbb N}
\newcommand{\diam}{{\rm Diam}}
\newcommand{\dx} {\; \mathrm{d} x}
\newcommand{\dd} {\; \mathrm{d}}
\newcommand{\h}{\mathcal H}
\newcommand{\meanbar}[1]{%
\setbox0 = \hbox{$#1 \int$}
\hbox to 0pt{%
\thinspace
\hskip 0.1\wd0
\raise 0.5\ht0
\hbox{%
\lower 0.5\dp0
\hbox{\rule{0.8\wd0}{2\linethickness}}
}%
\hss
}%
}
\author{Antoine Lemenant, Emmanouil Milakis and Laura V. Spinolo}
\title[Spectral Stability Estimates in rough domains]{Spectral Stability Estimates for the Dirichlet and Neumann Laplacian in rough domains.}
\begin{document}

\maketitle

\begin{abstract} In this paper we establish new quantitative stability estimates with respect to domain perturbations for all the eigenvalues of both the Neumann and the Dirichlet Laplacian. Our main results follow from an abstract lemma stating that it is actually sufficient to provide an estimate on suitable projection operators. Whereas this lemma could be applied under different regularity assumptions on the domain, here we use it to estimate the spectrum in Lipschitz and in so-called Reifenberg-flat domains. Our argument also relies on suitable extension techniques and on an estimate on the decay of the eigenfunctions at the boundary which could be interpreted as a boundary regularity result. 

\end{abstract}

{\bf AMS classification.} 49G05, 35J20, 49Q20

{\bf Key words.} Eigenvalue problem, Reifenberg-flat domains, Extension domains, Boundary Regularity, Elliptic problems, Variational methods, Quantitative Stability

\tableofcontents

\section{Introduction}
\subsection{Main results}
In this paper we deal with the eigenvalues of the Laplace operator for the Dirichlet and the Neumann problems in rough domains. Let $\Omega \subseteq \R^N$ be an open and bounded set. If $\partial \Omega$ satisfies suitable mild regularity assumptions, then we can apply the classical results concerning the spectrum of compact operators and infer that both the Dirichlet and the Neumann problems admit a sequence of nonnegative eigenvalues, which we denote by 
$$
    0 < \lambda_1 (\Omega) \leq \lambda_2 (\Omega) \leq \dots \leq \lambda_k  (\Omega)
\leq \dots \uparrow + \infty
$$
and 
$$
    0 = \mu_1 (\Omega) \leq \mu_2 (\Omega) \leq \dots \leq \mu_k  (\Omega) \leq \dots \uparrow + \infty, 
$$
respectively. Each eigenvalue is counted according to its multiplicity. 

The problem of investigating the way that the eigenvalues $\lambda_k$ and $\mu_k$ depend on the domain $\Omega$ has been widely studied. We refer to the books by Bucur and Buttazzo~\cite{BucurButtazzo} and by Henrot~\cite{Henrot} and to references therein for an extended discussion. See also the expository work by Hale~\cite{Hale}. In the present paper we establish new stability estimates concerning the dependence of the eigenvalues on domain perturbations. The most relevant features of the paper are the following.

First, our approach applies to both the Dirichlet and the Neumann problem, while many of the previous results were concerned with the Dirichlet problem only. The key argument in the present paper is based on an abstract lemma (Lemma~\ref{l:ab} in Section~\ref{s:estpro}) which is sufficiently general to apply to both Dirichlet and Neumann problems. Lemma~\ref{l:ab} has an elementary proof which uses ideas due to Birkhoff, de Boor, Swartz and Wendroff~\cite{BBSW}. Although in this paper we choose to mainly focus on domains satisfying a specific regularity condition, first introduced by E. R. Reifenberg~\cite{r},  Lemma~\ref{l:ab} can be applied to other classes of domains. As an example, we consider the case of Lipschitz domains, see Theorem~\ref{mainLip}.

Second, we impose very weak regularity conditions on the domains $\Omega_a$ and $\Omega_b$. The exact definition of the regularity assumptions we impose is given later, here we just mention that Reifenberg flatness is a property weaker than Lipschitz continuity and that Reifenberg-flat domains are relevant for the study of minimal surfaces~\cite{r} and of other problems, see Toro~\cite{t} for an overview.  

Third, in the present paper we establish quantitative estimates, while much of the analysis discussed in \cite{BucurButtazzo,Henrot} aimed at proving existence and convergence results.  More precisely, we will obtain estimates of the following type:  
\begin{equation}
\label{e:intro:me}
\begin{split}
& \qquad \qquad  |\lambda_k(\Omega_a)-\lambda_k(\Omega_b)| \leq C d_H(\Omega_a^c , \Omega_b^c)^\alpha \\
& |\mu_k(\Omega_a)-\mu_k(\Omega_b)| \leq C \max(d_H(\Omega_a^c , \Omega_b^c),d_H( \Omega_a ,  \Omega_b ))^\alpha. \\
 \end{split}
\end{equation}
In the previous expressions, $\Omega^c:=\R^N \setminus \Omega$  stands for the complement of the set $\Omega$ 
and $d_H$ for the Hausdorff distance, namely 
\begin{equation}
\label{e:h}
    d_H(  X ,  Y ) : = \max \big\{ \sup_{x \in X } d(x,  Y),
   \sup_{y \in  Y}d(y,  X) \big\}.
\end{equation}
In the following we will discuss both the admissible values for the exponent $\alpha$ and the reasons why the quantity $d_H( \Omega_a ,  \Omega_b )$ only appears in the stability estimate for the Neumann eigenvalues.

Note that quantitative stability results were established in a series of papers by Burenkov, Lamberti, Lanza De Cristoforis and collaborators (see~\cite{BLL} for an overview). However, the regularity assumptions we impose on the domains are different than those in~\cite{BLL} and, moreover, our approach relies on different techniques. Indeed, the analysis in~\cite{BLL} is based on the notion of  transition operators, while as mentioned before our argument combines real analysis techniques with an abstract lemma whose proof is based on elementary tools.  

We also refer to a very recent work by Colbois, Girouard and Iversen \cite{cgi} for other quantitative stability results concerning the Dirichlet problem.

As a final remark, we point out that Lemma 19 in~\cite{GeomPart} ensures that, given two sufficiently close Reifenberg-flat domains $\Omega_a, \Omega_b \subseteq \R^N$, the Hausdorff distance is controlled by the Lebesgue measure of the symmetric difference, more precisely 
$$
     d_H( \Omega_a^c,  \Omega_b^c)  \leq C |\Omega_a \triangle \Omega_b|^{\frac{1}{N}}, \quad  d_H(  \Omega_a,  \Omega_b  )  \leq C |\Omega_a \triangle \Omega_b|^{\frac{1}{N}}, 
$$
where the constant $C$ only depends on the dimension $N$ and on a regularity parameter of the domains. 
Hence, an immediate consequence of~\eqref{e:intro:me} is that the corresponding estimates holds in terms of $|\Omega_a \triangle \Omega_b|^{\frac{1}{N}}$. 
Before introducing our results, we specify the exact regularity assumptions we impose on the sets $\Omega_a$ and $\Omega_b$. 

\begin{defin}\label{defreif} Let $\varepsilon, r_0$ be two real numbers satisfying $0 < \varepsilon<1/2$ and $r_0 >0$. An $(\varepsilon,r_0)$-Reifenberg-flat domain $\Omega \subseteq \R^N$ is a nonempty open set satisfying the following two conditions:

\noindent i) for every $x \in \partial \Omega$ and for every $r\leq r_0$,  there is a hyperplane $P(x,r)$ containing $x$ which satisfies
\begin{eqnarray}
\frac{1}{r}d_H( \partial \Omega \cap B(x,r), P(x,r)\cap B(x,r))\leq \varepsilon. \label{reif}
\end{eqnarray}

\noindent ii) For every $x \in \partial \Omega$,  one of the connected component of  
$$B(x,r_0)\cap \big\{x : \;  dist(x,P(x,r_0))\geq 2\varepsilon r_0\big\}$$ 
is contained in   $\Omega$ and the other one is contained in $\R^N \setminus \Omega$.
\end{defin}
A direct consequence of the definition is that if $\varepsilon_1 < \varepsilon_2$, then any $(\varepsilon_1,r_0)$-Reifenberg-flat domain is also an $(\varepsilon_2,r_0)$-Reifenberg-flat domain. Note also that, heuristically speaking, condition $i)$ ensures that the boundary is well approximated by  hyperplanes at every small scale, while condition $ii)$ is a separating requirement equivalent to those in~\cite{hm3,hm2, hm1,mt}. The notion of Reifenberg flatness is strictly weaker than Lipschtiz continuity and appears in many areas like free boundary regularity problems and geometric measure theory (see \cite{d1,d2,ddpt,hm3,hm2,hm1,l1,l2,lm2,lm,mt,r,t,hm4} and the references therein).   

We now state our main result concerning the Dirichlet problem. We denote by $\mathcal H^{N-1}$ the Hausdorff $(N-1)$-dimensional measure.

\begin{theo}[Dirichlet Problem]\label{main} Let $B_0, D \subseteq \R^N$ be two given balls satisfying $B_0 \subseteq D$ and denote by  $(\gamma_n)_{n\in \N}$ the spectrum of the Dirichlet Laplacian in $B_0$ and by $R$ the radius of $D$. 

For any $\alpha\in ]0,1[$ there is $\varepsilon=\varepsilon(\alpha)$ such that the following holds. For any $n \in \N$, ${r_0>0}$ and $L_0>0$ there are constants $\delta_0=\delta_0(\gamma_n, n, \alpha, r_0, N,L_0, R )$ and $C=C( \alpha, r_0, N)$ such that whenever $\Omega_a$ and $\Omega_b$ are two $(\varepsilon,r_0)$-Reifenberg-flat domains in $\R^N$ such that
\begin{itemize}
\item $B_0 \subseteq \Omega_a \cap  \Omega_b$ and $\Omega_a \cup \Omega_b \subseteq D$;
\item  $L:=\max(\Hh^{N-1}(\partial \Omega_a),\Hh^{N-1}(\partial \Omega_b))\leq L_0;$
\item $d_H(\Omega_a^c, \Omega_b^c )\leq \delta_0,$
\end{itemize}
then 
\begin{equation}
\label{e:dir}
|\lambda_n ^a -\lambda_n^b|\leq C n \gamma_n(1+\gamma_n^{\frac{N}{2}})L d_H(\Omega_a^c, \Omega_b^c)^\alpha,
\end{equation}
where $\{ \lambda_n^a \}$ and $\{ \lambda_n ^b \}$ denote the sequences of eigenvalues of the Dirichlet Laplacian in $\Omega_a$ and $\Omega_b$, respectively. 
\end{theo}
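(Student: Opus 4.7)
The plan is to use the abstract Lemma~\ref{l:ab} to reduce the eigenvalue comparison to an approximation problem: for each $k \leq n$, construct a competitor $v_k \in H^1_0(\Omega_b)$ that approximates the $k$-th Dirichlet eigenfunction $u_k^a$ of $\Omega_a$ well in both the $L^2$-norm and the energy form, quantitatively in $\delta := d_H(\Omega_a^c, \Omega_b^c)$. The ambient Hilbert space is $L^2(D)$ with functions extended by zero, and a symmetric construction handles the comparison $\Omega_b \to \Omega_a$. The prefactor $n$ in \eqref{e:dir} records the dimension of the spectral subspace on which the projection lemma is applied, while $\gamma_n$ enters via the a~priori control $\lambda_k^a,\lambda_k^b \leq \gamma_k \leq \gamma_n$ (by monotonicity of Dirichlet eigenvalues under $B_0 \subseteq \Omega_a, \Omega_b$), hence $\|\nabla u_k^a\|_{L^2} \leq \gamma_n^{1/2}$ and, by interior elliptic regularity, $\|u_k^a\|_{L^\infty} \lesssim (1+\gamma_n)^{N/4}$.

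The natural candidate is $v_k := \eta\,u_k^a$ for a Lipschitz cutoff $\eta$ at an intermediate scale $\rho = \delta^\beta$ with $\beta \in (0,1)$ to be optimized: $\eta \equiv 0$ on a $\rho$-neighborhood of $\partial\Omega_b$, $\eta \equiv 1$ outside a $2\rho$-neighborhood, and $|\nabla \eta| \leq C/\rho$. For $\delta_0$ small enough one has $\delta \leq \rho$, so the support of $\eta$ sits at distance $\geq \rho - \delta$ from $\partial\Omega_a$ (using the Hausdorff closeness of $\partial\Omega_a$ and $\partial\Omega_b$), and hence $v_k \in H^1_0(\Omega_b)$ after trivial extension. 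Writing
\begin{equation*}
u_k^a - v_k = (1-\eta)u_k^a, \qquad \nabla(u_k^a-v_k) = (1-\eta)\nabla u_k^a - u_k^a\,\nabla\eta,
\end{equation*}
both the $L^2$ and the energy errors reduce to controlling the $L^2$ masses of $u_k^a$ and of $\nabla u_k^a$ in the tube $T_\rho$ of width $\simeq\rho$ around $\partial\Omega_a \cup \partial\Omega_b$, the second piece carrying a potentially harmful factor $\rho^{-1}$.

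The decisive analytic input is therefore the quantitative boundary decay estimate mentioned in the abstract: in an $(\varepsilon, r_0)$-Reifenberg-flat domain, Dirichlet eigenfunctions should satisfy a concentration bound of the form
\begin{equation*}
\|u_k^a\|_{L^2(T_\rho)}^2 + \rho^2\|\nabla u_k^a\|_{L^2(T_\rho)}^2 \lesssim L\,(1+\gamma_n)^{N/2+1}\,\rho^{1+2\sigma(\varepsilon)},
\end{equation*}
where $\sigma(\varepsilon)\to 1$ as $\varepsilon \to 0$; the $L$ factor reflects the volume bound $|T_\rho|\lesssim L\rho$, which follows from the $\mathcal H^{N-1}$-control on $\partial\Omega$ combined with the flatness at scale $\rho$. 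Inserting this into the two pieces of the error and optimizing $\beta$, the total discrepancy becomes $\lesssim L^{1/2}(1+\gamma_n^{N/4})\,\delta^{\alpha}$, provided $\varepsilon$ is chosen small enough (in terms of $\alpha$) that $\sigma(\varepsilon)$ exceeds the threshold needed to absorb the $\rho^{-1}$ loss coming from $\nabla\eta$. Feeding the resulting per-eigenfunction estimate into Lemma~\ref{l:ab} on the $n$-dimensional spectral subspace, and symmetrizing in $\Omega_a \leftrightarrow \Omega_b$, produces \eqref{e:dir} with the stated prefactor. I expect the main obstacle to be precisely the boundary decay step: standard Schauder estimates and the Hopf lemma do not apply verbatim in Reifenberg-flat domains, so the argument has to proceed by a scaling/iteration procedure that exploits the fact that at every small scale the boundary is close to a hyperplane — where the decay is linear — combined with an extension operator tailored to Reifenberg-flat domains and a Caccioppoli-type energy estimate in boundary layers.
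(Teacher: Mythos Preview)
Your architecture matches the paper's: apply Lemma~\ref{l:ab} with $H = H^1_0(D)$, build a competitor $\tilde u = \theta_0 u$ via a cutoff supported near the boundary, and feed in a boundary energy decay for the eigenfunction; then symmetrize. Two technical points differ from what the paper actually does, and both simplify your outline.

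First, the paper works directly at scale $r\simeq\delta$ (covering $\Omega_b\setminus\Omega_a$ by balls $B(x_i,5\delta/2)$ with $x_i\in\partial\Omega_b$), with no intermediate scale $\rho=\delta^\beta$. The term $\|u\,\nabla\theta_0\|_{L^2}^2$ is handled not via an $L^2$-decay of $u$ in the tube but by a Poincar\'e-type inequality (Proposition~12 of~\cite{lm2}) that converts $\int_{B_i}u^2$ into $\delta^2\int_{B_i'}|\nabla u|^2$, absorbing the $\delta^{-2}$ from $|\nabla\theta_0|^2$ and reducing everything to the gradient decay. With your claimed exponents the optimal $\beta$ would be $1$ anyway, so the optimization step is superfluous.

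Second, for the boundary gradient decay the paper does \emph{not} use a scaling/iteration argument against the flat model. It relies instead on the Alt--Caffarelli--Friedman monotonicity formula (Lemma~\ref{monot}): Reifenberg flatness forces the first Dirichlet eigenvalue of the spherical cap $\partial B_r(x_0)\cap\Omega$ to be close to $N-1$, which makes
\[
r \;\longmapsto\; r^{-(2-\eta)}\int_{B_r(x_0)\cap\Omega}\frac{|\nabla u|^2}{|x-x_0|^{N-2}}\,dx \;+\; C_0\,r^{\eta}
\]
nondecreasing and yields $\int_{B_r\cap\Omega}|\nabla u|^2 \lesssim \lambda(1+\lambda^{N/2})\,r^{N-\eta}\|u\|_2^2$ directly (Proposition~\ref{propdecayDir}). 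The compactness/iteration scheme you describe is exactly what the paper uses for the \emph{Neumann} decay (Proposition~\ref{propdecayneum}), where ACF monotonicity is not available.
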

In Section \ref{s:outline} we provide an outline of the proof of Theorem \ref{main}, here we make some remarks. 

First, Theorem \ref{main} is an extension of the main theorem in Lemenant Milakis \cite{lm2}, which establishes a similar result valid for the first eigenvalue only. The extension to the whole spectrum is not trivial and relies on the abstract result provided by Lemma~\ref{l:ab}.

Second, we are confident that our method could be extended to wider classes of linear elliptic equations with no substantial changes.

Third, we only define the ball $B_0$ and the sequence 
$\{ \gamma_n \}$ to simplify the statement of Theorem~\ref{main}. Indeed, we could have provided a sharper statement by letting the constant $C$ and $\delta_0$ directly depend on $\lambda_n^a$ and $\lambda_n^b$, but for sake of simplicity we decided to use the monotonicity of eigenvalues with respect to domain inclusion, which ensures that
$ \max\{ \lambda_n^a,\lambda_n^b \} \leq \gamma_n.$  Note  that in the Neumann case this monotonicity property fails and hence in the statement of  Theorem \ref{main2} the constants  explicitly depend on the eigenvalues. Also, note that $\gamma_n$ only depends on the inner radius of the domains $\Omega_a$ and $\Omega_b$. 

Finally, we make some remarks concerning the exponent $\alpha$ in \eqref{e:dir}. If an inequality like~\eqref{e:dir} holds, then $\alpha \leq 1$. This can be seen by recalling that the Sobolev-Poincar\'e constant  of the ball of radius $R$ in $\R^N$ is proportional to $R^2$. 
Note, however, that in~\eqref{e:dir} we require $\alpha<1$ and hence we do not reach the optimal exponent $\alpha=1$. Also, the regularity parameter $\varepsilon$ depends on $\alpha$, and the closer $\alpha$ is to $1$, the smaller is $\varepsilon$. Those restrictions are due to technical reasons connected to the fact that we are imposing fairly weak regularity assumptions on the domains.

However, if we restrict to Lipschitz domains we obtain sharper estimates since $\alpha$ can attain the optimal value $\alpha =1$. Also, in the case of Lipschitz domains no restriction is imposed on the regularity parameter, i.e. we do not need to require that the domains are ``sufficiently flat" as in the Reifenberg-flat case. 

Before stating the precise result, we have to introduce the following definition. 

\begin{defin} Let $\Omega \subseteq \R^N$ be an open set, then $\Omega$ satisfies a uniform $(\rho,\theta)$-cone condition if for any $x \in \partial \Omega$ there is a unit vector $\nu \in  \R^{N-1}$, possibly depending on $x$, such that
$$B(x, 3\rho)\cap \Omega - C_{\rho,\theta}(\nu) \subseteq \Omega \quad \text{ and }\quad B(x,3 \rho)\setminus \R^N  + C_{\rho,\theta}(\nu) \subseteq \Omega \setminus \R^N,$$
where $B(x,3\rho)$ denotes the ball centered at $x$ with radius $3 \rho$ and $C_{\rho,\theta}(\nu)$ is the cone with height $\rho>0$ and opening $\theta\in ]0,\pi]$, 
$$C_{\rho,\theta}(\nu):=\{ h \in \R^N: \; h\cdot \nu > |h| \cos \theta\} \cap B(\vec 0, \rho).$$ 
\end{defin}
We now state the stability result for Lipschitz domains.
\begin{theo}[Dirichlet Problem in Lipschitz domains]
\label{mainLip} 
Let $B_0, D \subseteq \R^N$ be two given balls satisfying $B_0 \subseteq D$ and denote by  $(\gamma_n)_{n\in \N}$ the spectrum of the Dirichlet Laplacian in $B_0$ and by $R$ the radius of $D$. 

For any $\rho>0, \; \theta \in  ]0,\pi]$ there are constants $C=C(\rho,\theta,n, N,\gamma_n, R)$ and $\delta_0 = \delta_0(\rho,\theta, n, N,\gamma_n, R)$ such that the following holds. Let $\Omega_a$ and $\Omega_b$ be two open sets satisfying a $(\rho, \theta)$-cone condition and the following properties: 
\begin{itemize}
\item $B_0 \subseteq \Omega_a \cap \Omega_b$ and $\Omega_a \cup \Omega_b   \subseteq D$;
\item  
$
    \delta: =d_H (\Omega_a^c, \Omega_b^c) \leq \delta_0.
$
\end{itemize}
Then
$$
|\lambda_n^a -\lambda_n^b |\leq C \delta.
$$
\end{theo}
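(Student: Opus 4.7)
I would follow the same scheme as for Theorem~\ref{main}, but exploit the stronger regularity granted by the $(\rho,\theta)$-cone condition to push the exponent all the way to $\alpha=1$. The starting point is the abstract Lemma~\ref{l:ab}: it reduces the control of $|\lambda_n^a-\lambda_n^b|$ to the following approximation statement. For each Dirichlet eigenfunction $u_k^a$ of $\Omega_a$ with $k\leq n$, I need to exhibit a test function $\tilde u_k^a\in H^1_0(\Omega_b)$ such that the Gram matrices $A_{kl}=\int\tilde u_k^a\tilde u_l^a$ and $B_{kl}=\int\nabla\tilde u_k^a\cdot\nabla\tilde u_l^a$ agree with $\mathrm{Id}$ and $\mathrm{diag}(\lambda_k^a)$ up to an error of order $\delta$. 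The reverse role of $\Omega_a$ and $\Omega_b$ is handled symmetrically; note that the cone condition also furnishes standard $W^{1,2}$-extension operators with norms depending only on $(\rho,\theta)$.

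The construction of $\tilde u_k^a$ is by a sharp cutoff. I would pick a smooth $\eta_\delta:\R^N\to[0,1]$ with $\eta_\delta\equiv 0$ on $\{\dist(\cdot,\Omega_a^c)\leq\delta\}$, $\eta_\delta\equiv 1$ on $\{\dist(\cdot,\Omega_a^c)\geq 2\delta\}$, and $|\nabla\eta_\delta|\leq C/\delta$, and set $\tilde u_k^a:=\eta_\delta u_k^a$. The hypothesis $d_H(\Omega_a^c,\Omega_b^c)\leq\delta$ forces $\{\dist(\cdot,\Omega_a^c)\geq\delta\}\subseteq\Omega_b$, so $\tilde u_k^a\in H^1_0(\Omega_b)$ automatically.

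The two error integrals will be controlled via boundary regularity of the eigenfunctions on Lipschitz domains. I expect to use the linear decay $|u_k^a(x)|\leq C(n,\gamma_n)\dist(x,\partial\Omega_a)$ together with $|\nabla u_k^a|\leq C(n,\gamma_n)$ near $\partial\Omega_a$; both follow from the Hopf lemma at interior cone points and from standard $C^{1,\alpha}$-up-to-the-boundary estimates for the eigenvalue equation in Lipschitz domains. Combined with the uniform bound $\mathcal H^{N-1}(\partial\Omega_a)\leq C(\rho,\theta,R)$ that the cone condition and $\Omega_a\subseteq D$ imply, these deliver $\int_{\{\dist(\cdot,\partial\Omega_a)\leq 2\delta\}}|u_k^a|^2\leq C\delta^3$ and $\int_{\{\dist(\cdot,\partial\Omega_a)\leq 2\delta\}}|\nabla u_k^a|^2\leq C\delta$, which immediately gives $A_{kl}=\delta_{kl}+O(\delta)$.

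The main obstacle lies in securing $B_{kl}-\lambda_k^a\delta_{kl}=O(\delta)$. Expanding $\nabla\tilde u_k^a=\eta_\delta\nabla u_k^a+u_k^a\nabla\eta_\delta$ produces a cross term of the form $\int u_l^a\eta_\delta(\nabla\eta_\delta\cdot\nabla u_k^a)$, on which a direct Cauchy--Schwarz only yields $O(\sqrt\delta)$ and would spoil the exponent. To upgrade to $O(\delta)$ I would integrate by parts using the eigenvalue equation, rewriting $\nabla u_k^a\cdot\nabla\eta_\delta=\dv(\eta_\delta\nabla u_k^a)+\lambda_k^a\eta_\delta u_k^a$ and inserting the identity: the $\nabla\eta_\delta$ then disappears and only integrals of $(1-\eta_\delta^2)\,\nabla u_k^a\cdot\nabla u_l^a$ and $(1-\eta_\delta^2)\,u_k^a u_l^a$ survive, both of size $O(\delta)$ by the previous bounds. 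Feeding $A_{kl}=\delta_{kl}+O(\delta)$ and $B_{kl}=\lambda_k^a\delta_{kl}+O(\delta)$ into Lemma~\ref{l:ab}, and exchanging the roles of $\Omega_a$ and $\Omega_b$, yields $|\lambda_n^a-\lambda_n^b|\leq C\delta$, which is exactly the claim.
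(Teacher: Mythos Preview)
Your overall scheme --- use Lemma~\ref{l:ab} and reduce to a projection estimate --- matches the paper, but the way you propose to obtain the projection estimate has a genuine gap.

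The claim that Dirichlet eigenfunctions on a domain satisfying only a $(\rho,\theta)$-cone condition enjoy ``standard $C^{1,\alpha}$-up-to-the-boundary estimates'' and in particular a pointwise bound $|\nabla u_k^a|\leq C$ near $\partial\Omega_a$ is \emph{false}. A uniform cone condition is equivalent to a Lipschitz boundary, and on Lipschitz domains the gradient of a harmonic (or eigen-) function with zero boundary data can blow up at re-entrant corners. This is exactly the phenomenon discussed in Remark~\ref{grisvard}: in a planar sector of opening $\omega>\pi$ the model solution behaves like $r^{\pi/\omega}$, so $|\nabla u|\sim r^{\pi/\omega-1}\to\infty$ as $r\to0$. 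For any $\theta<\pi$ the cone condition allows such re-entrant corners. Consequently your estimates $\int_{\{\dist(\cdot,\partial\Omega_a)\leq 2\delta\}}|\nabla u_k^a|^2\leq C\delta$ and hence ``$\int(1-\eta_\delta^2)\nabla u_k^a\cdot\nabla u_l^a=O(\delta)$'' are not justified; the cutoff construction by itself does not yield the exponent $1$.

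The paper sidesteps this entirely: instead of a cutoff competitor, it compares $u_k^b$ with the solution $\bar u_b\in H^1_0(\Omega_a)$ of $-\Delta\bar u_b=\lambda_k^b u_k^b$ and invokes the domain-perturbation estimate of Savar\'e and Schimperna~\cite{SavareSchimperna}, which gives directly
\[
\|\nabla u_k^b-\nabla\bar u_b\|_{L^2(D)}^2\leq C(\rho,\theta,N,R)\,\|\lambda_k^b u_k^b\|_{L^2}\,\|\lambda_k^b u_k^b\|_{H^{-1}}\,d_H(\Omega_a^c,\Omega_b^c).
\]
This estimate requires only the cone condition on the domains and no pointwise gradient control of the eigenfunctions. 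Feeding it into Lemma~\ref{l:ab} as in~\eqref{e:space} gives the linear bound. If you want to rescue a direct approach, you would need to replace the false $C^{1,\alpha}$ input by a genuinely available fractional regularity statement for Lipschitz domains (e.g.\ the Ne\v{c}as/Jerison--Kenig type $H^{3/2}$ bound) and check that it indeed yields the boundary-strip energy estimate at order $\delta$; that is a substantially different argument from what you sketched.
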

The proof of Theorem \ref{mainLip} combines the above-mentioned abstract result (Lemma~\ref{l:ab}) with stability estimates for solutions of elliptic equations due to Savar\'e and Schimperna~\cite{SavareSchimperna}. 

We now state the stability result concerning the Neumann problem. 
\begin{theo}[Neumann Problem]
\label{main2}
For any $\alpha \in ]0, 1[$ there is $\varepsilon = \varepsilon (\alpha)$ such that the following holds. Let $\Omega_a$ and $\Omega_b$ be two bounded, connected, $(\varepsilon, r_0)$-Reifenberg-flat domains in $\R^N$ such that 
\begin{itemize}
\item $L:=\max(\Hh^{N-1}(\partial \Omega_a),\Hh^{N-1}(\partial \Omega_b))\leq L_0;$
\item both $\Omega_a$ and $\Omega_b$ are contained in the ball $D$, which has radius $R$. 
\end{itemize} 
 Let $\mu_n^a$ and $\mu_n ^b$ be the corresponding sequences of Neumann Laplacian eigenvalues and denote by $\mu_n^*:=\max \{  \mu_n^a, \mu_n ^b \}$. For any $n \in \N$, there are constants $\delta_0=\delta_0(\mu^*_n, \alpha, r_0, n, N, R, L_0 )$ and  $C=C(N, r_0, \alpha, R)$ such that if 
$$\max \big\{ d_H( \Omega_a^c,  \Omega_b^c ),d_H( \Omega_a,  \Omega_b)\big\}\leq \delta_0,$$
then 
\begin{eqnarray}
|\mu_n ^a -\mu_n^b|\leq C n \big(1+\sqrt{\mu_n^*} \big)^{2 \gamma(N)+2} L(\max(d_H( \Omega_a^c,  \Omega_b^c),d_H( \Omega_a,  \Omega_b))^\alpha, \label{estimaNEUM}
\end{eqnarray}
where $\gamma(N)=\max\left\{ \displaystyle{\frac{N}{2},\frac{2}{N-1} }\right\}$.
\end{theo}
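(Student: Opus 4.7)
The plan is to invoke the abstract Lemma~\ref{l:ab} and reduce the theorem to the construction of suitable test functions across domains. Fix $n$ and let $u_1^a, \ldots, u_n^a$ be an $L^2(\Omega_a)$-orthonormal family of Neumann eigenfunctions of $\Omega_a$ associated to the first $n$ eigenvalues. According to Lemma~\ref{l:ab}, to obtain a bound of the form $\mu_n^b \leq \mu_n^a + \eta$ it suffices to produce, for each $k\leq n$, a function $v_k \in H^1(\Omega_b)$ which approximates $u_k^a$ in the sense that both the $L^2$ mismatch and the difference of Dirichlet energies between $\Omega_b$ and $\Omega_a$ are controlled by $\eta$.

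The natural choice is $v_k := (E u_k^a)|_{\Omega_b}$, where $E : H^1(\Omega_a) \to H^1(\R^N)$ is a Sobolev extension operator. Because $\Omega_a$ is an $(\varepsilon,r_0)$-Reifenberg-flat domain with $\varepsilon$ small, such an extension operator can be built with norm depending only on $\varepsilon$, $r_0$, $N$ and $R$; I would import this fact from a preparatory section on extension properties of Reifenberg-flat sets. With this choice, a direct computation gives
\begin{equation*}
\biggl| \int_{\Omega_b} |\nabla v_k|^2 - \int_{\Omega_a} |\nabla u_k^a|^2 \biggr| + \biggl| \int_{\Omega_b} |v_k|^2 - \int_{\Omega_a} |u_k^a|^2 \biggr| \leq C \int_{T_\delta} \bigl( |\nabla (E u_k^a)|^2 + |E u_k^a|^2 \bigr),
\end{equation*}
where $T_\delta$ is a tubular neighborhood of $\partial \Omega_a \cup \partial \Omega_b$ of width $\delta := \max\{ d_H(\Omega_a^c,\Omega_b^c), d_H(\Omega_a,\Omega_b)\}$. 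It is precisely here that the assumption on $d_H(\Omega_a,\Omega_b)$, and not merely on the complements, enters: unlike in the Dirichlet case, the symmetric difference $\Omega_a \triangle \Omega_b$ may contain pieces on either side of the two boundaries, so both Hausdorff distances must be controlled.

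The core task is then to bound the energy of $E u_k^a$ on the thin tube $T_\delta$. First, Reifenberg flatness combined with the bound $\mathcal H^{N-1}(\partial\Omega) \leq L_0$ ensures Ahlfors regularity of the boundary and hence $|T_\delta| \leq C L \delta$. Second, by H\"older's inequality,
\begin{equation*}
\int_{T_\delta} |E u_k^a|^2 + |\nabla(E u_k^a)|^2 \leq |T_\delta|^{1-\frac{2}{p}} \| E u_k^a \|_{W^{1,p}(D)}^2
\end{equation*}
for some $p>2$. The main obstacle will be establishing a uniform higher-integrability estimate of the form $\| E u_k^a \|_{W^{1,p}} \leq C (1+\sqrt{\mu_k^a})^{\gamma(N)+1}$; this is the boundary decay / regularity statement advertised in the abstract. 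It is substantially more delicate than in the Dirichlet case because Neumann eigenfunctions do not vanish at $\partial\Omega_a$, so there is no way to bypass genuine boundary regularity. I would attack this via a Meyers-type perturbative argument for the Neumann Laplacian combined with Sobolev embedding iterated along a reverse-H\"older chain; the specific exponent $\gamma(N)=\max\{N/2,2/(N-1)\}$ is what comes out of balancing the available Sobolev gain against the thin-tube factor $|T_\delta|^{1-2/p}$ admissible in the Reifenberg class.

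Once this higher-integrability bound is in place, one applies it to each of $u_1^a,\ldots,u_n^a$, substitutes the resulting control into Lemma~\ref{l:ab} with $\eta \sim C n (1+\sqrt{\mu_n^*})^{2\gamma(N)+2} L \delta^\alpha$ (the loss from $\delta$ to $\delta^\alpha$ with $\alpha<1$ reflecting that $p$ cannot be taken arbitrarily large when $\varepsilon$ is fixed), and performs the symmetric argument with the roles of $\Omega_a$ and $\Omega_b$ interchanged. Combining the two resulting inequalities yields \eqref{estimaNEUM}.
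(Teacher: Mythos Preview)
Your overall architecture---apply Lemma~\ref{l:ab} after building test functions in $H^1(\Omega_b)$ from the eigenfunctions of $\Omega_a$, control the discrepancy on a thin tube around the two boundaries, then symmetrize---matches the paper. The technical core, however, is genuinely different. The paper does \emph{not} use an extension operator followed by restriction; instead Proposition~\ref{estimproj2} constructs $\tilde u$ explicitly via the partition of unity of Lemmas~\ref{cov1} and~\ref{thetazero}, replacing $u$ near $\partial\Omega_b$ by local averages $m_i$ on small balls $D_i\subset\Omega_b$. The energy on the tube is then controlled entirely in $L^2$: the zero-order term via the $L^\infty$ bound on Neumann eigenfunctions (Proposition~\ref{uestim})---this is where the exponent $\gamma(N)$ actually comes from, not from balancing a Sobolev gain against a tube factor---and the gradient term via the scale-by-scale decay $\int_{B(x,r)\cap\Omega}|\nabla u|^2\le C r^{N-\eta}$ of Proposition~\ref{propdecayneum}, proved by iterated comparison with the mixed-problem harmonic replacement of Theorem~\ref{thdecay}. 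No $W^{1,p}$ integrability for $p>2$ is invoked anywhere.

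Your route through $W^{1,p}$ has a gap as stated. A Meyers-type reverse-H\"older argument only yields $p$ slightly above $2$; via H\"older this gives $\alpha=1-2/p$ close to $0$, not close to $1$. To obtain every $\alpha<1$ you would need $p\to\infty$ as $\varepsilon\to 0$, i.e.\ the full $W^{1,p}$ theory for the Neumann problem in Reifenberg-flat domains (of Byun--Wang type). That machinery exists and would salvage your approach, but it is substantially heavier than the direct $L^2$ decay the paper uses, and it is not what ``Meyers-type'' delivers. In short: the strategy is reasonable, but the boundary-regularity input you actually need is stronger than what you named, and the paper's $L^2$ decay route avoids this difficulty altogether.
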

Five remarks are here in order:
\begin{enumerate}
\item In the statement of Theorem~\ref{main2} we again impose that $\varepsilon$ is sufficiently small. Note, however, that in the case of the Neumann problem there are counterexamples showing that, if the boundary is not flat enough, then the stability result may fail, see the book by Courant and Hilbert~\cite[p.420]{CourantHilbert}. Also, as in the Dirichlet case we do not reach the optimal exponent $\alpha=1$ but we are arbitrary close to it, up to a small enough $\varepsilon$. 
\item In the statement of Theorem~\ref{main2} we imposed that the domains $\Omega_a$ and $\Omega_b$ are both connected. We did so to simplify the exposition. However, by relying on the analysis in~\cite[Section4]{GeomPart}, one can remove the connectedness assumption: the price one has to pay is that the threshold $\varepsilon$ for the stability estimate to hold not only depends on $\alpha$, but also on the dimension $N$. 
\item We point out that the Neumann problem poses much more severe challenges than the Dirichlet problem.  For instance, given an open set $\Omega \subseteq \R^N$, we can always extend a function in $H^1_0 (\Omega)$ to obtain a Sobolev function defined on the whole $\R^N$ by setting $u=0$  outside $\Omega$.  This leads to the classical monotonicity property of the eigenvalues with respect to domain inclusions.  Conversely, extending a function in $H^1 (\Omega)$ outside $\Omega$ is not trivial and one needs some regularity assumption on $\Omega$. This issue is investigated in the companion paper~\cite{GeomPart}, see Corollary 6 in there for the precise statement concerning the extension of Sobolev functions defined in Reifenberg-flat domains. Moreover, the monotonicity with respect to domain inclusion is in general false in the case of the Neumann problem. This is the reason why in the statement of Theorem~\ref{main2} we have introduced the quantity $\mu^{\ast}_n$. 
\item A consequence of the fact that the Neumann problem is more difficult to handle than the Dirichlet problem is the following. The stability estimate in Theorems~\ref{main} and~\ref{mainLip} (Dirichlet problem) only involves $d_H(\Omega_a^c, \Omega_b^c)$. Conversely, the stability estimate in Theorem~\ref{main2} (Neumann problem) involves both $d_H(\Omega_a, \Omega_b)$ and $d_H(\Omega_a^c, \Omega_b^c)$. From  the heuristic viewpoint, the reason of this discrepancy is the following.

In the case of the Dirichlet problem, a key point in the proof is constructing a function $\tilde u \in H^1_0 (\Omega_a)$ which is ``sufficiently close" to a given a function $u \in H^1_0 (\Omega_b)$. To achieve this, what we do is loosely speaking we modify $u$ in such a way that it takes the value $0$ in the region $\Omega_b \setminus \Omega_a$. It turns out that, to control the difference between the function $u$ and the function $\tilde u$ constructed in this way, we only need to control $d_H(\Omega_a^c, \Omega_b^c)$. This a consequence of the fact that we do not need to control the behavior of $\tilde u$ on $\Omega_a \setminus \Omega_b$ because by construction $\tilde u$ takes the value zero there. The details of this construction are provided in the proof of Proposition~\ref{estimproj}. 

On the other hand, in the case of the Neumann problem one has to construct a function $\tilde u \in H^1(\Omega_a)$ which is ``sufficiently close" to a given function $u \in H^1 (\Omega_b)$. It turns out that in this case, to control the difference $\tilde u - u$, one has to control both $d_H(\Omega_a, \Omega_b)$ and $d_H(\Omega_a^c, \Omega_b^c)$. This a consequence of the fact that one has to control the behavior of the functions on both $\Omega_a \setminus \Omega_b$ and $\Omega_b \setminus \Omega_a$. The details of this construction are provided in the proof of Proposition~\ref{estimproj2}.
\item Finally, we underline that one could determine how the constants in the statements of Theorems~\ref{main},~\ref{mainLip} and~\ref{main2} explicitly depend on the various parameters by examining the proofs. 
\end{enumerate}
\subsection{Outline}
\label{s:outline}
The proofs of  Theorems~\ref{main} and~\ref{main2} both rely on the following steps.
\begin{enumerate}
\item[\rm{(i)}] First, we establish an abstract lemma (Lemma~\ref{l:ab} in Section~\ref{ss:lemma}) which reduces the problem of estimating the difference between the eigenvalues to the problem of estimating the difference between the eigenfunctions and their projections on suitable subspaces. For instance, in the case of the Dirichlet problem it is enough to establish appropriate estimates on the norm of the orthogonal projection from $H^1_0(\Omega_b)$ onto $H^1_0(\Omega_a)$.

\item[\rm{(ii)}] Next, we estimate the difference between the eigenfunctions and their projections. For both the Dirichlet and the Neumann cases we employ the same strategy (which was already used in the previous work \cite{lm2} by the first two authors). Namely, we employ a covering argument which reduces the problem of estimating the eigenfunction projection to the problem of controlling the energy of the eigenfunction close to the boundary. For instance, in the Dirichlet case we show that, given the eigenfunction $u \in H^1_0(\Omega_b)$, there is $\tilde u \in H^1_0(\Omega_a)$ satisfying 
$$\|\nabla u- \nabla \tilde u\|_{L^2 (\R^N)} \leq C \|\nabla u\|_{L^2(W)},$$
where $W$ is a tiny strip close the boundary of $\Omega_b$.

\item[\rm{(iii)}] Finally, we provide an estimate on the energy of the eigenfunctions in proximity of the boundary. In the Dirichlet case, we obtain a decay result by relying on the monotonicity argument by Alt, Caffarelli and Friedman \cite{acf} (see Lemma \ref{monot}). For the Neumann case, we rely on a compactness argument coming from \cite{l1}.
\end{enumerate}

The paper is organized as follows. In Section 2 we go over some properties of Reifenberg-flat domains that we need in the following. Section~\ref{s:estpro} is devoted to the statement and proof of the above-mentioned abstract result (Lemma~\ref{l:ab}). In Section~\ref{s:proofmain} we conclude the proof of Theorem~\ref{main} (Dirichlet problem in Reifenberg-flat domains) and Theorem \ref{mainLip} (Dirichlet problem in Lipschitz domains).  Finally in Section~\ref{s:proofmain2} we establish the proof of Theorem~\ref{main2} (Neumann problem in Reifenberg-flat domains).

\subsection{Notation}
We denote by $C(a_1, \dots, a_h)$ a constant only depending on the variables $a_1, \dots, a_h$. Its precise value can vary from line to line. Also, we use the following notations:  \\
$\Delta$ : the Laplace operator.\\
$\Hh^N$ : the Hausdorff measure of dimension $N$.\\
$|A|$ : the Lebesgue measure of the Borel set $A$. \\
$C^\infty_c(\Omega)$ : $C^\infty$ functions with compact support in the open set $\Omega$.\\
$H^1(\Omega)$ : the Sobolev space of the $L^2(\Omega)$ functions whose distributional gradient is in $L^2(\Omega)$, endowed with the norm 
$$
   \| u \|_{H^1(\Omega)} : =  \sqrt{ \| u \|^2_{L^2(\Omega)} +  \| \nabla u \|^2_{L^2(\Omega)}}. 
$$
$H^1_0(\Omega)$ : the closure of $C^{\infty}_c (\Omega)$ in $H^1 (\Omega)$, endowed with the norm $\| u \|_{H^1_0(\Omega)}: =\| \nabla u \|_{L^2(\Omega)}$.\\
$[x, y]$: the segment joining the points $x, y \in \R^N$. \\
$x \cdot y$: the Euclidian scalar product between the vectors  $x, y \in\R^N$. \\
$|x |$: the Euclidian norm of the vector $x \in \R^N$. \\ 
$d(x, y)$: the distance from the point $x$ to the point $y$, $d(x, y)= |x-y|$. \\
$d(x, A)$: the distance from the point $x$ to the set $A$. \\
$d_H(A,B)$ : the Hausdorff distance from the set $A$ to the set $B$.\\
$\diam (A)$: the diameter of the bounded set $A$,
$$
    \diam (A): = \sup_{x, y \in A} d(x, y)
$$
${\bf 1}_A:$ the characteristic function of the set $A$. \\
$B(x, r)$: the open ball of radius $r$ centered at $x$. \\ 
When no misunderstanding can arise, we write $\| \cdot \|_2$ and $\| \cdot \|_{\infty}$ instead of $\| \cdot \|_{L^2(\Omega)}$ and $\| \cdot \|_{L^{\infty} (\Omega)}$.  \\
\section{Preliminary results}\label{Sprel}
\label{topos}
We start by quoting a covering lemma that we need in the following.
\begin{lem}{\rm \cite[Lemma 11]{lm2}} 
\label{cov1} 
Let $\Omega\subseteq \R^N$ be an $(\varepsilon,r_0)$-Reifenberg-flat domain such that $0<\Hh^{N-1}(\partial \Omega)<+\infty$. Given $r<r_0/2$, consider the family of balls $\{B(x,r)\}_{x \in \partial \Omega}$. We can extract a subfamily $\{ B(x_i, r) \}_{i \in I}$ satisfying the following properties: \begin{itemize}
\item[(i)]$\{ B(x_i, r) \}_{i \in I}$ is a covering of $\displaystyle{ \bigcup_{x \in \partial \Omega}B(x,\frac{4}{5}r)}$; 
\item[(ii)] we have the following bound: $\sharp I \leq C(N) \Hh^{N-1}(\partial \Omega)/ r^{N-1}$; 
\item[(iii)] the covering is bounded, namely  
\begin{equation}
\label{e:disjoint}
        B(x_i, r/10) \cap  B(x_j, r/10) = \emptyset \quad \text{if} \; i \neq j. 
\end{equation}
\end{itemize}
\end{lem}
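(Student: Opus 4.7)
The plan is to combine a standard Vitali-type maximal packing construction with the lower Ahlfors $(N-1)$-regularity of the boundary of a Reifenberg-flat domain. The first ingredient yields a disjoint family with a controlled covering radius, and the second ingredient converts this into the $\Hh^{N-1}$-counting bound.

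First I would use Zorn's lemma (or, since $\Hh^{N-1}(\partial\Omega)<+\infty$, a direct greedy exhaustion in a bounded neighbourhood) to select a maximal family of centres $\{x_i\}_{i\in I}\subseteq\partial\Omega$ with pairwise distances $|x_i-x_j|\geq r/5$ for $i\neq j$. By construction the balls $B(x_i,r/10)$ are pairwise disjoint, which is precisely (iii). Maximality then forces that for every $x\in\partial\Omega$ there exists $i\in I$ with $|x-x_i|<r/5$, so that any $y\in B(x,4r/5)$ satisfies $|y-x_i|<r$ by the triangle inequality and hence lies in $B(x_i,r)$; this gives (i).

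The bound (ii) is the main content and is the step I expect to be the real obstacle. Using the disjointness of the balls $B(x_i,r/10)$ we have
\begin{equation*}
    \sum_{i\in I}\Hh^{N-1}\bigl(\partial\Omega\cap B(x_i,r/10)\bigr)\;\leq\;\Hh^{N-1}(\partial\Omega),
\end{equation*}
so (ii) reduces to the lower Ahlfors estimate
\begin{equation*}
    \Hh^{N-1}\bigl(\partial\Omega\cap B(x_i,s)\bigr)\;\geq\;c(N)\,s^{N-1}\quad\text{for all }x_i\in\partial\Omega,\; s\leq r_0/2,
\end{equation*}
applied with $s=r/10$. To establish this I would project $\partial\Omega\cap B(x_i,s)$ orthogonally onto the approximating hyperplane $P=P(x_i,s)$ supplied by condition i) of Definition \ref{defreif}, and check that this projection is surjective onto a disc in $P$ of radius comparable to $s$. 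Surjectivity follows from a standard intermediate-value argument: the flatness inequality \eqref{reif} traps $\partial\Omega\cap B(x_i,s)$ in a thin slab of width $\varepsilon s$ around $P$, while the separation condition ii) (used at the appropriate scale, which is precisely where the assumption $r<r_0/2$ is needed) ensures that any segment perpendicular to $P$ crossing this slab passes from $\Omega$ to $\R^N\setminus\Omega$ and therefore meets $\partial\Omega$. Since orthogonal projection is $1$-Lipschitz, this yields $\Hh^{N-1}(\partial\Omega\cap B(x_i,s))\geq c(N)\,s^{N-1}$, and summing over $i\in I$ gives (ii).

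The delicate point is this last projection argument, which relies essentially on $\varepsilon$ being small enough for the two sides of $\partial\Omega$ to genuinely separate at the scale $s$; the rest of the proof is essentially combinatorial. I would also expect that the reduction of the scale-$s$ separation statement to the scale-$r_0$ assumption ii) in Definition \ref{defreif} is the place where one has to be most careful, and is likely quoted from earlier work on Reifenberg-flat sets rather than re-derived.
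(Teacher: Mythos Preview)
The paper does not give its own proof of this lemma: it is quoted verbatim as \cite[Lemma 11]{lm2} and only accompanied by a remark explaining why the $r^{N-1}$ bound in (ii) is the nontrivial part. Your proposal is correct and is exactly the standard argument one expects to find in the cited reference: a Vitali-type $5r$-maximal packing gives (i) and (iii) immediately, and (ii) follows from the lower Ahlfors $(N-1)$-regularity of $\partial\Omega$, which in turn is obtained by projecting $\partial\Omega\cap B(x_i,s)$ onto the approximating hyperplane and using the separation condition to see that the projection covers a disc of radius comparable to $s$. Your identification of the projection/separation step as the only genuinely delicate point, and of the scale restriction $r<r_0/2$ as the place where one must pass from the fixed-scale separation hypothesis to the working scale, is accurate; this reduction is indeed typically quoted (in this paper it appears as Lemma~5 of \cite{GeomPart}, cf.\ also \cite[Proposition~2.2]{hm3}) rather than re-derived.
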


\begin{remark} Note that all the balls $B(x_1, r) \dots, B(x_{\sharp I}, r)$ are contained in a sufficiently large ball of radius $2\diam(\Omega)$. By combining this observation with property~\eqref{e:disjoint} we get the estimate $\sharp I \leq \diam(\Omega)^{ N} /r^N$. The reason why property (ii) is not trivial is because we provide a bound in terms of $1/r^{N-1}$ instead of $1/r^N$: we need this sharper estimate to achieve for instance~\eqref{e:ref}.

Note also that, by applying a similar argument, we get the estimate 
\begin{equation}
\label{e:overlap}
    \text{for any} \; x \in \bigcup_{i \in I} B(x_i, r), \qquad
    \sharp\big\{i :  x \in B(x_i, 2r) \big\} \leq C(N). 
\end{equation}

\end{remark}

In the following we need cut-off functions $\theta_0, \dots, \theta_{\sharp I}$ satisfying suitable conditions. The construction of these functions is standard, but for completeness we provide it.  
\begin{lem} \label{thetazero}  Under the same hypothesis as in Lemma \ref{cov1}, there  are Lipschitz continuous cut-off function $\theta_i: \mathbb R^N \to \R$, $i=0, \dots, \sharp I$, that satisfy the following conditions:  
\begin{equation}
\label{e:theta} 
\begin{split}
&        0 \leq \theta_i (x) \leq 1, \; \forall \, x \in \R^N,  
        \qquad |\nabla \theta_i (x) | \leq \frac{C(N)}{r} \; a.e. \, x \in \R^N, \qquad i=0, \dots, 
        \sharp I \\
&        \theta_0  (x) =0  \; \; \mathrm{if} \;
        x \in \bigcup_{i \in I} B \left( x_i, r \right), \qquad
        \theta_0  (x) =1 \; \; \mathrm{if} \; x \in  \R^N \setminus \bigcup_{i \in I} B \left( x_i,   2 r \right) \phantom{\int} \\
&      \theta_i  (x) =0 \; \; \mathrm{if} \; x \in  \R^N \setminus B(x_i, 2 r ), 
      \; i=1, \dots, \sharp I, \qquad \sum_{i=0}^{\sharp I} \theta_i (x) =1 , \; \forall \, x \in \R^N.    \\
  \end{split}
       \end{equation}
\end{lem}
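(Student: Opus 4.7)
The plan is to carry out a standard partition-of-unity construction: build elementary Lipschitz bumps $\phi_i$ supported in $B(x_i,2r)$ for $i=1,\dots,\sharp I$, build an ``outside'' bump $\phi_0$ by hand, and then renormalize by setting $\theta_i:=\phi_i/\Phi$ with $\Phi:=\phi_0+\sum_{i\in I}\phi_i$. All the required pointwise values, supports, and the unit sum will fall out of the definitions; the only quantitative point is controlling the Lipschitz constant of the renormalization, and this is precisely where the overlap estimate \eqref{e:overlap} is used.

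Concretely, for $i=1,\dots,\sharp I$ I would set
$$
\phi_i(x):=\max\bigl(0,\min(1,\,2-|x-x_i|/r)\bigr),
$$
so that $\phi_i\equiv 1$ on $B(x_i,r)$, $\phi_i\equiv 0$ on $\R^N\setminus B(x_i,2r)$, and $|\nabla \phi_i|\le 1/r$ a.e. For the exterior function I would take
$$
\phi_0(x):=\max\Bigl(0,\,1-\max_{1\le i\le \sharp I}\phi_i(x)\Bigr),
$$
which is $(1/r)$-Lipschitz as a pointwise $\max/\min$ of $(1/r)$-Lipschitz functions. One checks immediately that $\phi_0\equiv 0$ on $\bigcup_{i\in I}B(x_i,r)$ (since there some $\phi_i$ equals $1$) and $\phi_0\equiv 1$ on $\R^N\setminus \bigcup_{i\in I}B(x_i,2r)$ (since there all $\phi_i$ vanish).

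Setting $\Phi:=\phi_0+\sum_{i\in I}\phi_i$, the key observation is the two-sided bound
$$
1\le \Phi(x)\le C(N),\qquad |\nabla \Phi(x)|\le C(N)/r \text{ a.e. }x\in\R^N.
$$
The lower bound is immediate from the definition of $\phi_0$, since $\Phi\ge \phi_0+\max_i\phi_i\ge 1$. The upper bound and the gradient bound both follow from the finite-overlap property: by the same volume/packing argument behind \eqref{e:overlap} (applied to the disjoint balls $B(x_i,r/10)$ of \eqref{e:disjoint}), at each $x\in\R^N$ at most $C(N)$ of the balls $B(x_i,2r)$ contain $x$, so only $C(N)$ summands in $\sum_i\phi_i$ (and in $\sum_i\nabla\phi_i$) are nonzero at $x$. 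Defining $\theta_i:=\phi_i/\Phi$ and $\theta_0:=\phi_0/\Phi$, the quotient rule combined with $\Phi\ge 1$, $0\le \phi_i\le 1$ and the bounds above yields $0\le\theta_i\le 1$ and $|\nabla\theta_i|\le C(N)/r$. The support conditions and the partition property $\sum_{i=0}^{\sharp I}\theta_i\equiv 1$ are then inherited directly from those of the $\phi_i$.

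I expect no genuine obstacle: the only delicate point is the gradient estimate on $\Phi$, which without the bounded-overlap property \eqref{e:overlap} would a priori scale with $\sharp I$ and destroy the dimension-only constant in \eqref{e:theta}. Everything else is a mechanical verification.
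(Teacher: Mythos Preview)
Your proposal is correct and follows essentially the same strategy as the paper: build auxiliary bumps, bound their sum from above and below (using the bounded-overlap estimate \eqref{e:overlap}), and renormalize. The only cosmetic difference is in the construction of the outer function: the paper sets $\psi_0(x):=\prod_{i\in I}\ell(d(x,x_i)/r)$ for a suitable piecewise linear $\ell$, whereas you take $\phi_0:=\max(0,\,1-\max_i\phi_i)$; your choice makes the Lipschitz bound on $\phi_0$ slightly more transparent (no product rule is needed), but otherwise the two arguments are interchangeable.
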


\begin{proof} Let $\ell, h: [0, + \infty ) \to [0, 1]$ be the Lipschitz continuous functions defined as follows: 
$$
    \ell(t) : = 
    \left\{
    \begin{array}{lll}
    0 & \text{if} \; 0 \leq t \leq 1 \\
    2 (t- 1) & \text{if} \; 1 \leq t \leq 3/2 \\    
    1 & \text{if} \; t \ge 3/2 \\
    \end{array}    
    \right. \qquad 
     h(t) : = 
    \left\{
    \begin{array}{lll}
    1 & \text{if} \; 0 \leq t \leq 3/2 \\
    -2(t -2 ) & \text{if} \; 3/2  \leq t \leq 2 \\    
    0 & \text{if} \; t \ge 2. \\
    \end{array}    
    \right.
$$
First, we point out that $|\ell '|, |h '| \leq 2$. Next, we set
$$
    \psi_0 (x) : = \prod_{i=1}^{\sharp I} \ell \big(d(x, x_i)/r \big) \qquad \psi_i (x):= h \big(d(x, x_i)/ r \big), \; i=1, \dots, \sharp I.   
$$
The goal is now showing that 
\begin{equation}
\label{e:bound}
         1 \leq \sum_{i=0}^{\sharp I} \psi_i(x) \leq C(N), \quad \forall \, x \in \R^N. 
\end{equation}
To establish the bound from below, we make the following observations: first, all the functions $\psi_i$, $i=0, \dots, \sharp I$ take by construction only nonnegative values. Second, 
$$  
    \psi_0 (x) \equiv 1 \quad \forall \; x \in \R^N \setminus \bigcup_{i=1}^{\sharp I} B(x_i, 3 r /2) 
$$    
and hence the lower bound holds on that set. If $x \in \displaystyle{\bigcup_{i=1}^{\sharp I} B(x_i, 3 r /2) }$, then at least one of the $\psi_i$, $i=1, \dots, \sharp I$, takes the value $1$. This establishes bound from below in~\eqref{e:bound}.

To establish the bound from above, we point out that, for any $i=1, \dots, \sharp I$, 
$\psi_i(x) =0$ if $x \in \R^N \setminus B(x_i, 2r)$. By recalling~\eqref{e:overlap} and that $\psi_i(x) \leq 1$ for every $x \in \R^N$ and $i =0, \dots, \sharp I$, we deduce that
$$
     \sum_{i=0}^{\sharp I} \psi_i(x) \leq C(N) + 1 \leq C(N), \qquad \forall \, x \in \R^N
$$
and this concludes the proof of~\eqref{e:bound}. 

Due to the lower bound in~\eqref{e:bound}, we can introduce the following definitions: 
$$
    \theta_i(x) : = \frac{\psi_i(x)}{ \displaystyle{\sum_{i=0}^{\sharp I} \psi_i(x)}}, \quad i=0, \dots, \sharp I, \; x \in \R^N. 
$$
We now show these functions satisfy~\eqref{e:theta}: the only nontrivial point is establishing the bound on the gradient. To this end, we first point out that 
$$
    |\nabla \psi_i (x) | \leq \frac{2}{r}, \quad \forall \; x \in \R^N, \; i=0, \dots, \sharp I.
$$
Next, we recall that $\psi_i(x) =0$ for every $x \in \R^N \setminus B(x_i, 2r)$ and every $i=1, \dots, \sharp I$. By combining these observations with inequalities~\eqref{e:overlap} and~\eqref{e:bound} we get
\begin{equation*}
\begin{split}
          |\nabla \theta_i (x) | = 
          & 
          \left| 
          \frac{\nabla \psi_i(x)}{ \displaystyle{\sum_{i=0}^{\sharp I} \psi_i(x)}} -
          \frac{\psi_i(x)  \displaystyle{\sum_{i=0}^{\sharp I} \nabla \psi_i(x)}}{\Big( \displaystyle{\sum_{i=0}^{\sharp I} \psi_i(x)} \Big)^2}
          \right| \leq \frac{2}{r} + \frac{C(N)}{r} \leq \frac{C(N)}{r} \quad  \forall \; x \in \R^N, \; i=0, \dots, \sharp I. \\
\end{split}
\end{equation*}
This concludes the proof of the lemma. 
\end{proof}

We now state a result ensuring that the classical Rellich-Kondrachov Theorem applies to Reifenberg-flat domains. The proof is provided in~\cite{GeomPart}. For simplicity, here we only give the statement in the case when the summability index is $p=2$, but the result hold in the general case, see~\cite{GeomPart}. 
\begin{prop}\label{embedding} Let $\Omega \subseteq \mathbb R^N$ be a bounded, connected, $(\varepsilon,r_0)$-Reifenberg-flat domain and assume that $\varepsilon \leq 1/600$. Then the following properties hold:  
\begin{enumerate}
\item[\rm{(i)}] if $N >2$, $H^1(\Omega)$ is continuously embedded in $L^{2^*}(\Omega)$, $2^*:=2N/(N-2)$, and it is compactly embedded in $L^q(\Omega)$ for any $q \in [1, 2^*[$. Also, the norm 
of the embedding operator only depends on $N$, $r_0$, $q$ and on the diameter $\diam (\Omega)$. 
\item[\rm{(ii)}] if $N=2$, $H^1(\Omega)$ is compactly embedded in $L^{q}(\Omega)$ for every $q \in [1, + \infty[$. Also, the norm of the embedding operator only depends on $r_0$, $q$ and $\diam (\Omega)$. 
\end{enumerate}
\end{prop}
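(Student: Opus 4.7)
The plan is to deduce both parts of the statement from the classical Sobolev and Rellich-Kondrachov theorems on $\R^N$ via a bounded extension operator. Under the hypothesis $\varepsilon \leq 1/600$, I would first invoke from the companion paper \cite{GeomPart} the existence of a bounded linear extension operator
$$
E : H^1(\Omega) \longrightarrow H^1(\R^N), \qquad \|E u\|_{H^1(\R^N)} \leq K \|u\|_{H^1(\Omega)},
$$
with $(Eu)|_\Omega = u$ and $K$ depending only on $N$, $r_0$ and $\diam(\Omega)$. This is the statement of Corollary 6 in \cite{GeomPart}, which I would import as a black box.

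Once $E$ is at our disposal, the remainder is routine. For the continuous embedding in the case $N>2$, I would apply the Gagliardo-Nirenberg-Sobolev inequality to $E u$ on all of $\R^N$ and restrict back to $\Omega$:
$$
\|u\|_{L^{2^*}(\Omega)} \;\leq\; \|Eu\|_{L^{2^*}(\R^N)} \;\leq\; C(N)\,\|\nabla Eu\|_{L^2(\R^N)} \;\leq\; C(N)\,K\,\|u\|_{H^1(\Omega)}.
$$
For the compactness, I would fix a ball $B^\ast$ containing $\Omega$ in its interior together with a slightly larger ball $B^{\ast\ast}$ and a smooth cut-off $\chi$ supported in $B^{\ast\ast}$ with $\chi \equiv 1$ on $B^\ast$. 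Given a bounded sequence $(u_n) \subseteq H^1(\Omega)$, the sequence $(\chi\, E u_n)$ is bounded in $H^1_0(B^{\ast\ast})$, so the classical Rellich-Kondrachov theorem on the smooth bounded domain $B^{\ast\ast}$ yields a subsequence converging in $L^q(B^{\ast\ast})$ for every $q \in [1, 2^*)$ (respectively, every $q \in [1, +\infty)$ when $N=2$). Restricting to $\Omega$ gives the required convergence and hence the desired compact embedding, with a norm depending only on $N$, $r_0$, $q$ and $\diam(\Omega)$.

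The one nontrivial step is the extension, Step 1. Without such an operator the standard proofs of Sobolev and Rellich-Kondrachov, which rely on Whitney-type reflections or local straightenings of the boundary, cannot be applied directly, since the boundary of an $(\varepsilon,r_0)$-Reifenberg-flat domain need not even be Lipschitz. In \cite{GeomPart} this obstacle is overcome by patching together local reflections across the approximating hyperplanes $P(x,r)$ from Definition~\ref{defreif} by means of a partition of unity of the type produced in Lemma~\ref{thetazero}; the quantitative smallness assumption $\varepsilon \leq 1/600$ is precisely what guarantees that reflections across overlapping approximating half-spaces are compatible enough to glue into a globally bounded operator. The details are technical and lie outside the present paper, so I would refer the reader to \cite{GeomPart} for the construction and simply apply the resulting extension in Steps 2 and 3 above.
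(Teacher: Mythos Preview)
Your proposal is correct and matches the paper's approach: the paper itself gives no proof of Proposition~\ref{embedding} but simply states that ``the proof is provided in~\cite{GeomPart}'', and the key input from that companion paper is precisely the bounded extension operator $H^1(\Omega)\to H^1(\R^N)$ (Corollary~6 there) that you invoke. Your sketch of how to pass from the extension to the Sobolev and Rellich--Kondrachov conclusions via restriction and a cut-off is the standard reduction and is exactly what is intended.
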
    
A consequence of Proposition \ref{embedding} is that Neumann eigenfunctions defined in Reifenberg-flat domains are bounded (see again~\cite{GeomPart} for the proof).
\begin{prop}\label{uestim} Let $\Omega \subseteq \mathbb R^N$ be a bounded, connected, $(\varepsilon,r_0)$-Reifenberg-flat domain and let $u$ be a Neumann eigenfunction associated with the eigenvalue $\mu$. If $\varepsilon \leq 1/600$, then $u$ is bounded and 
\begin{eqnarray}
\label{e:linfty}
\|u \|_{L^\infty(\Omega)}\leq C (1+\sqrt{\mu})^{\gamma(N)} \|u\|_{L^{2}(\Omega)}, \label{desirein}
\end{eqnarray}
where $\gamma(N)=\displaystyle{ \max\left\{ \frac{N}{2},\frac{2}{N-1} \right\}}$ and $C=C(N, r_0, \diam (\Omega))$.
\end{prop}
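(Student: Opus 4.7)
The plan is to carry out a standard Moser iteration based on the Neumann eigenvalue equation, using the Sobolev embedding of Proposition~\ref{embedding} as the engine for the iteration, and then collect the geometric series to obtain the stated dependence on $\mu$.

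Since $u\in H^1(\Omega)$ satisfies $\int_\Omega \nabla u\cdot\nabla\varphi = \mu\int_\Omega u\varphi$ for every $\varphi\in H^1(\Omega)$, I would first test the equation with $\varphi_{\beta,M}=u\min(|u|,M)^{2\beta}$ for $\beta\ge 0$ and a truncation $M>0$; this is admissible because $\varphi_{\beta,M}\in H^1(\Omega)\cap L^\infty(\Omega)$. Computing $\nabla\varphi_{\beta,M}$, keeping only the favourable sign on the left, and letting $M\to\infty$ (using monotone convergence, provided $u\in L^{2(\beta+1)}$ which will be verified inductively), one obtains, for $v:=|u|^{\beta+1}$,
$$
\int_\Omega |\nabla v|^2 \leq \frac{(\beta+1)^2}{2\beta+1}\,\mu\int_\Omega v^2.
$$

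Next, I would apply Proposition~\ref{embedding} to $v$. For $N\ge 3$ this gives $\|v\|_{L^{2^*}}^2\leq C_S(\|v\|_{L^2}^2+\|\nabla v\|_{L^2}^2)$ with $2^*=2N/(N-2)$, so, setting $p=2(\beta+1)$ and $\chi=N/(N-2)$, we get
$$
\|u\|_{L^{\chi p}(\Omega)}\leq \bigl(C_S\,p\,(1+\mu)\bigr)^{1/p}\,\|u\|_{L^p(\Omega)}.
$$
For $N=2$, one uses the embedding $H^1(\Omega)\hookrightarrow L^q(\Omega)$ for (say) $q=4$, which yields the same inequality with $\chi=2$. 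Iterating starting from $p_0=2$ along the sequence $p_k=2\chi^k$, one obtains by telescoping
$$
\|u\|_{L^{p_{k+1}}(\Omega)}\leq \prod_{j=0}^{k}\bigl(C_S\,p_j\,(1+\mu)\bigr)^{1/p_j}\,\|u\|_{L^2(\Omega)}.
$$
Since $\sum_j 1/p_j=\chi/(2(\chi-1))$ converges and $\sum_j \log(p_j)/p_j<\infty$, passing to the limit $k\to\infty$ gives $u\in L^\infty(\Omega)$ with the quantitative bound $\|u\|_{L^\infty}\leq C(1+\mu)^{\chi/(2(\chi-1))}\|u\|_{L^2}$. For $N\ge 3$ the exponent equals $N/4$, so $(1+\mu)^{N/4}\leq (1+\sqrt\mu)^{N/2}$; for $N=2$ with $\chi=2$ the exponent equals $1$, giving $(1+\sqrt\mu)^2$. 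Both fall under the bound $(1+\sqrt\mu)^{\gamma(N)}$, and the constant inherits its dependence on $N$, $r_0$ and $\diam(\Omega)$ from the Sobolev embedding.

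The main obstacle is the \emph{book-keeping of the iteration}: one has to ensure inductively that $u\in L^{p_k}(\Omega)$ at each step so that the use of the test function $\varphi_{\beta,M}$ and the limit $M\to\infty$ are legitimate, and that the product of the iterated constants indeed converges. The base step $u\in L^{2^*}(\Omega)$ (or $u\in L^q$ for every $q<\infty$ if $N=2$) follows from the energy equality together with Proposition~\ref{embedding}; the inductive step is a direct consequence of the Moser inequality above. The second technical point, which is genuinely where the hypothesis $\varepsilon\leq 1/600$ enters, is that the Sobolev embedding we invoke is that of Proposition~\ref{embedding}, whose validity on the rough domain $\Omega$ requires exactly this smallness assumption on the Reifenberg flatness parameter.
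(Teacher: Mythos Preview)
Your Moser iteration is correct and is the natural way to deduce the $L^\infty$ bound from Proposition~\ref{embedding}. Note, however, that the paper does not actually give its own proof of this proposition: immediately before the statement it writes ``see again~\cite{GeomPart} for the proof'', so there is nothing in the present paper to compare against. Your argument is precisely the standard route one would expect in~\cite{GeomPart}: the Caccioppoli inequality $\|\nabla v\|_2^2\le \frac{(\beta+1)^2}{2\beta+1}\mu\|v\|_2^2$ for $v=|u|^{\beta+1}$, combined with the Sobolev embedding (whose constant carries the dependence on $N$, $r_0$, $\diam(\Omega)$, and whose validity uses $\varepsilon\le 1/600$), iterated along $p_k=2\chi^k$. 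The bookkeeping you flag (legitimacy of the truncated test function, inductive membership $u\in L^{p_k}$, convergence of the infinite product) is routine and poses no real obstacle; the exponents you compute, $N/4$ for $N\ge 3$ and $1$ for $N=2$, indeed fall under $(1+\sqrt{\mu})^{\gamma(N)}$.
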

The following result ensures that Dirichlet eigenfunctions satisfy an inequality similar to \eqref{desirein}. Note that in this case no regularity requirement is imposed on the domain. 
\begin{prop}{\cite[Lemma 3.1]{Davies}}  \label{propinfty1} Let $\Omega \subseteq \R^N$ be a bounded domain and let $v$ be an eigenfunction for the Dirichlet Laplacian in $\Omega$ and let $\lambda$ be the associated eigenvalue. Then $v$ is bounded and 
\begin{equation}
\|v\|_{L^\infty(\Omega)}\leq \left(\frac{\lambda e }{2\pi N}\right)^{\frac{N}{4}}\|v\|_{L^2(\Omega)}.\label{infinitybound}
\end{equation}
\end{prop}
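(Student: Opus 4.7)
The plan is to establish this bound via the standard heat-semigroup (ultracontractivity) argument due to Davies. Let $\{e^{t\Delta_\Omega}\}_{t\geq 0}$ denote the semigroup generated by the Dirichlet Laplacian on $\Omega$, defined spectrally from the orthonormal $L^2$-basis of eigenfunctions (which exists since $-\Delta_\Omega$ has compact resolvent on any bounded open set, by the Poincar\'e inequality on $H^1_0(\Omega)$). Because $v$ is an eigenfunction with eigenvalue $\lambda$,
$$
    e^{t\Delta_\Omega} v = e^{-\lambda t} v \quad \text{for every } t>0.
$$

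Next, I would invoke the pointwise domination of the Dirichlet heat kernel $p_t^\Omega(x,y)$ by the free Gaussian kernel $p_t(x,y)=(4\pi t)^{-N/2}e^{-|x-y|^2/(4t)}$ on $\R^N$. This domination is classical and requires no regularity of $\bdary \Omega$; it follows either from the parabolic maximum principle or, more transparently, from the Feynman--Kac representation $p_t^\Omega(x,y)=p_t(x,y)\,\Pp_{x,y}^{t}(\tau_{\bdary\Omega}>t)$, where $\tau_{\bdary \Omega}$ is the first exit time of the Brownian bridge from $\Omega$. Combining this with the Cauchy--Schwarz inequality yields the $L^2\to L^\infty$ bound
$$
    \|e^{t\Delta_\Omega} f\|_{L^\infty(\Omega)}\leq \sup_{x\in\Omega}\|p_t^\Omega(x,\cdot)\|_{L^2(\Omega)}\,\|f\|_{L^2(\Omega)} \leq \|p_t(x,\cdot)\|_{L^2(\R^N)}\,\|f\|_{L^2(\Omega)} = (8\pi t)^{-N/4}\|f\|_{L^2(\Omega)},
$$
where the last equality uses the explicit Gaussian computation $\|p_t(x,\cdot)\|_{L^2(\R^N)}^2=p_{2t}(x,x)=(8\pi t)^{-N/2}$. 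Specializing to $f=v$ and using the eigenvalue identity, we obtain
$$
    \|v\|_{L^\infty(\Omega)}\leq e^{\lambda t}(8\pi t)^{-N/4}\|v\|_{L^2(\Omega)} \quad \text{for every } t>0.
$$

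The final step is a one-parameter optimization over $t>0$. Differentiating $\lambda t-(N/4)\log(8\pi t)$ in $t$ gives the minimizer $t_\ast=N/(4\lambda)$, at which $e^{\lambda t_\ast}=e^{N/4}$ and $(8\pi t_\ast)^{-N/4}=(\lambda/(2\pi N))^{N/4}$; multiplying produces exactly the constant $(\lambda e/(2\pi N))^{N/4}$ of \eqref{infinitybound}. The only nontrivial ingredient in the plan is the kernel domination, which is where all the analysis is hidden; once it is available, the rest is algebraic. Notably, the argument is completely insensitive to the regularity of $\bdary\Omega$, which is why no geometric hypothesis on $\Omega$ (beyond boundedness, used implicitly for the existence of a discrete spectrum) appears in the statement.
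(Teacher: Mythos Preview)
The paper does not supply its own proof of this proposition; it simply quotes the result from Davies and moves on. Your argument is correct and is precisely the standard ultracontractivity proof that the citation points to: heat-kernel domination by the free Gaussian gives the $L^2\to L^\infty$ bound $(8\pi t)^{-N/4}$, and the eigenfunction identity $e^{t\Delta_\Omega}v=e^{-\lambda t}v$ followed by optimization at $t_\ast=N/(4\lambda)$ yields exactly the stated constant. There is nothing to compare beyond this, and your remark that no boundary regularity is needed is exactly why the paper invokes this bound without any hypothesis on $\partial\Omega$.
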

To conclude this section we quote a result from~\cite{l1} concerning harmonic functions satisfying mixed Neumann-Dirichlet conditions.  More precisely, let  $\Omega$ be an $(\varepsilon,r_0)$-Reifenberg-flat domain and let $u \in H^1(\Omega)$. Given $x \in \partial \Omega $ and $r<r_0$, consider the problem
\begin{equation}\label{probbb}
\begin{cases}
\Delta v = 0 &\text{ in } \Omega \cap B(x,r)\\
v= u & \text { on } \partial B(x,r) \cap   \Omega \phantom{\displaystyle{\Bigg(}}\\
\displaystyle{\frac{\partial{v} }{  \partial \nu }=0}  &  \text{ on } \partial \Omega \cap B(x,r). 
\end{cases}
\end{equation}
The existence of a weak solution of~\eqref{probbb} can be obtained by considering the variational formulation
\begin{eqnarray}
    \min \Big\{\int_{B(x,r)\cap \Omega}|\nabla w|^2 dx: \; {\displaystyle{ w \in  H^1(B(x,r)\cap \Omega)}, \;  w = u \; \textrm{on}  \; \partial B(x,r) \cap     \Omega}   \Big\} \label{minimi}
\end{eqnarray}
and by then applying the argument in~\cite[Proposition 3.3]{d}. We now state the result that  we need in the following.
\begin{theo}{\rm \cite[Theorem 1.2]{l1}} 
\label{thdecay} 
For every $\beta>0$ and  $a\in \, ]0,1/2[$ there is a positive $\varepsilon= \varepsilon (\beta, a)$ such that the following holds.  Let  $\Omega$ be an $(\varepsilon,r_0)$-Reifenberg-flat domain, $u \in H^1(\Omega)$,  $x \in \partial \Omega $ and $r\leq r_0$. Then the solution of the minimum problem~\eqref{minimi} satisfies 
\begin{eqnarray}
\int_{B(x,ar)\cap \Omega}|\nabla v |^2 dx \leq  a^{N-\beta}
\int_{B(x,r) \cap \Omega}|\nabla v|^2dx .\label{decay3}
\end{eqnarray}
\end{theo}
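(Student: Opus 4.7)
The plan is to establish the decay estimate by a blow-up / contradiction scheme: as the Reifenberg parameter $\varepsilon$ tends to $0$ the boundary becomes increasingly flat and the mixed problem~\eqref{probbb} degenerates into a homogeneous Neumann problem on a half-ball, which after reflection reduces to a classical interior estimate for a harmonic function on a full ball.

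First I would rescale: the dilation $y=(z-x)/r$ reduces the statement to $x=0$, $r=1$, since~\eqref{minimi} and Definition~\ref{defreif} are scale invariant. Fix $\beta>0$ and $a\in (0,1/2)$ and suppose toward contradiction that there is a sequence $\varepsilon_n\downarrow 0$, $(\varepsilon_n,r_0)$-Reifenberg-flat domains $\Omega_n$ with $0\in\partial\Omega_n$, boundary data $u_n$ on $\partial B(0,1)\cap\Omega_n$, and associated minimizers $v_n$ of~\eqref{minimi} for which the decay fails at scale $a$. After subtracting a constant (the gradient is unchanged) and rescaling so that $\int_{B(0,1)\cap\Omega_n}|\nabla v_n|^2\,dx=1$ and $v_n$ has zero mean on $B(0,1)\cap\Omega_n$, a Poincar\'e inequality on Reifenberg-flat sets (derivable from Proposition~\ref{embedding}) gives $\|v_n\|_{H^1(B(0,1)\cap\Omega_n)}\leq C$.

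Next I would pass to the limit. Using the extension operators for Reifenberg-flat domains from~\cite{GeomPart}, extend $v_n$ to $\tilde v_n\in H^1(B(0,2))$ with uniformly bounded norm. Since $\varepsilon_n\to 0$, the approximating hyperplanes $P(0,1)$ in Definition~\ref{defreif} converge (up to subsequence) to a hyperplane $P_\infty$ through the origin, and the separation condition $ii)$ forces $\Omega_n\cap B(0,1)\to H\cap B(0,1)$ in the Hausdorff distance, where $H$ is the half-space bounded by $P_\infty$ selected by $ii)$. Along a subsequence, $\tilde v_n\rightharpoonup v_\infty$ weakly in $H^1$ and strongly in $L^2$. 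Passing to the limit in the weak Euler--Lagrange identity $\int\nabla v_n\cdot\nabla\varphi\,dx=0$, tested against $\varphi\in C^\infty_c(B(0,1))$, one checks that $v_\infty$ is harmonic in $H\cap B(0,1)$ and satisfies a homogeneous Neumann condition on $P_\infty\cap B(0,1)$. Reflecting $v_\infty$ across $P_\infty$ produces a harmonic function $\hat v$ on all of $B(0,1)$, and since $\nabla\hat v$ is also harmonic the standard interior estimate yields
\begin{equation*}
\int_{B(0,a)}|\nabla\hat v|^2\,dx\leq C(N)\,a^{N}\int_{B(0,1)}|\nabla\hat v|^2\,dx\leq 2\,C(N)\,a^{N}.
\end{equation*}

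Finally, I would extract the contradiction. A Caccioppoli estimate on $B(0,a')$ with $a<a'<1/2$ together with the Euler--Lagrange equation should upgrade the weak $H^1$ convergence to strong $L^2$ convergence of $\nabla v_n$ on $B(0,a)$, so that $\int_{B(0,a)\cap\Omega_n}|\nabla v_n|^2\,dx\to\int_{B(0,a)\cap H}|\nabla v_\infty|^2\,dx$. Combined with the standing assumption $\int_{B(0,a)\cap\Omega_n}|\nabla v_n|^2\,dx\geq a^{N-\beta}$, this gives $a^{N-\beta}\leq 2C(N)\,a^N$, i.e.\ $a^{-\beta}\leq 2C(N)$, which is false once $a$ is smaller than a threshold $a_0=a_0(\beta,N)$. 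This proves the estimate at some initial scale $a_0$, and a one-scale iteration (applying the estimate on successively smaller balls and absorbing the resulting multiplicative constant into a slight deterioration of the exponent, harmless since $\beta>0$ is arbitrary) extends it to every $a\in(0,1/2)$. I expect the main obstacle to be the passage to the Neumann condition in the limit: the boundaries $\partial\Omega_n$ only converge in the Hausdorff sense, so recovering the Neumann condition for $v_\infty$ requires strong $L^2$ convergence of $\nabla v_n$ in a neighborhood of $P_\infty$ and uniform Poincar\'e / extension controls along the whole family $\{\Omega_n\}$. These uniform controls are precisely what the smallness of $\varepsilon_n$ buys, which is why in the statement $\varepsilon$ must be chosen depending on both $\beta$ and $a$.
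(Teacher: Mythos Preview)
The present paper does not give its own proof of Theorem~\ref{thdecay}: the statement is quoted verbatim from~\cite[Theorem~1.2]{l1} and used as a black box in the proof of Proposition~\ref{propdecayneum}. So there is no in-paper argument to compare your proposal against. That said, the compactness/blow-up scheme you outline is exactly the natural route to such a decay estimate and is, in spirit, the method used in the cited reference~\cite{l1}: rescale, contradict, pass to a flat limit, and exploit that the limiting Neumann harmonic function on a half-ball reflects to a harmonic function on the full ball.

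Two comments on the details. First, your final iteration step is unnecessary if you keep track of the constant. For a subharmonic function $w=|\nabla\hat v|^2$ the monotonicity of averages gives exactly $\int_{B(0,a)}w\le a^N\int_{B(0,1)}w$ with constant~$1$, not $C(N)$. Plugging this in, the contradiction reads $a^{-\beta}\le 1$, which is false for \emph{every} $a\in(0,1)$; so the argument already covers all $a\in(0,1/2)$ directly, and no bootstrap from a small initial scale $a_0$ is needed. Your iteration, as written, goes the wrong way anyway (it produces the estimate at scales $a_0^k$, which are smaller, not larger).

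Second, the genuinely delicate point is the one you flag yourself: obtaining strong $L^2$ convergence of $\nabla v_n$ on $B(0,a)\cap\Omega_n$ despite the fact that this set touches the moving Neumann boundary $\partial\Omega_n$. A boundary Caccioppoli inequality (test the equation with $\varphi^2(v_n-c)$ for a cutoff $\varphi$ supported in $B(0,a')$, $a<a'<1$) combined with the uniform extension from~\cite{GeomPart} does the job, but this step has to be carried out carefully since the domains vary. Once that is secured, the weak lower semicontinuity of the Dirichlet energy on $B(0,1)$ and the strong convergence on $B(0,a)$ close the contradiction.
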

\begin{remark}Note that the decay estimate in Theorem \ref{thdecay} does not depend on the trace of $u$ on the spherical part $\partial B(x,r) \cap \Omega$. 
\end{remark}
\section{Reduction to projection estimates}
\label{s:estpro}
In this section we study the abstract eigenvalue problem and we establish Lemma~\ref{l:ab}, which roughly speaking says that controlling the behavior of suitable projections is sufficient to control the difference between eigenvalues. The abstract framework is sufficiently general to apply to both the Dirichlet and the Neumann problem: these applications are discussed in Sections~\ref{s:proofmain} and~\ref{s:proofmain2} respectively. 

\subsection{Abstract framework}
Our abstract framework is composed of the following objects:
\begin{enumerate} 
\item[\rm{(i)}] $H$ is a real, separable Hilbert space with respect to the scalar product $\h  (\cdot, \cdot)$, which induces the norm $\| \cdot \|_{\h}$. 
\item[\rm{(ii)}] $h : H \times H \to \R$ is a function satisfying:
\begin{equation}
\label{e:c1}
         h (\alpha u + \beta v, z) = \alpha h (u, z) + \beta h (v, z), \qquad
         h (u, v) = h(v, u), \qquad 
         h (u, u) \ge 0, 
\end{equation}
for all $u$, $v$, $z \in H$ and $\alpha$, $\beta \in \R$. Note that we are not assuming that $h$ is a scalar product, namely $h(u, u)=0$ does not necessarily imply $u =0$. We also assume that  
there is a constant $C_H>0$ satisfying  
\begin{equation}
\label{e:poinab}
    h (u, u ) \leq C_H \h (u, u) \quad \forall \, u \in H. 
\end{equation}
\item[\rm{(iii)}] $V$ is a closed subspace of the Hilbert space $H$ such that 
the restriction of the bilinear form $h (\cdot, \cdot)$ to $V \times V$ is actually a scalar product on $V$, namely 
\begin{equation}\label{assumption3}
{\rm{for \ every}}\ v \in V,\ h(v, v)=0\ {\rm{implies}}\ v=0. 
\end{equation}
We denote by $\bar V$ the closure of $V$ with respect to the norm $\| \cdot  \|_h$ induced by $h$ on $V$ and we also assume that the inclusion 
\begin{equation}
\label{e:rellichab}
        \begin{split}
        i :  \Big( V, \| \cdot  \|_{\h} & \Big) \to  \Big( \bar V, \| \cdot  \|_{h} \Big) \\
        & u \mapsto u \\
       \end{split}        
\end{equation}
is compact. 
\end{enumerate}
We now introduce an abstract eigenvalue problem associated with the function $h$ defined above: we are interested in eigencouples $(u, \lambda) \in V \setminus \{ 0 \} \times \R$ satisfying 
\begin{equation}\label{e:eigpb}
\h (u, v ) = \lambda \, h (u, v)
\end{equation}
for every $v\in V$.

We first check that~\eqref{e:eigpb} admits a solution. In view of ~\eqref{e:c1} 
 and~\eqref{e:poinab} we obtain that, for any fixed $f \in \bar V$, the map
$$
        v \mapsto h (f, v) 
$$
is a linear, continuous operator defined on the Hilbert space $(V, \| \cdot \|_{\h})$. As a consequence, Riesz's Theorem ensures that there is a unique element $u_f \in V$ such that
\begin{equation}
\label{e:t}
    \h (u_f, v ) = h(f, v) \quad \forall \; v \in V. 
\end{equation}
Consider the map 
\begin{equation*}
        \begin{split}
        T :    \Big(\bar V, \| \cdot \|_h & \Big) \to \Big(V, \| \cdot \|_\h \Big) \\
        &  f \mapsto u_f \\
       \end{split}        
\end{equation*}
which is linear and continuous since by setting $v= u_f$ in~\eqref{e:t} and by using~\eqref{e:poinab} one gets $\|u_f \|_\h \leq \sqrt{C_H}    \| f \|_h$. Let $i$ be the same map as in~\eqref{e:rellichab}, then the composition $T \circ i$ is compact. Also, in view of~\eqref{e:t} we infer that it is self-adjoint. By relying on classical results on compact, self-adjoint operators we infer that there is a sequence of eigencouples $(\nu_n, u_n)$ such that 
$$
    u_n \neq 0, \quad T u_n = \nu_n u_n, \quad  \lim_{n \to + \infty} \nu_n = 0, \quad \h (u_n, u_m) =0
     \; \text{if $m \neq n$}.
$$
As usual, each eigenvalue is counted according to its multiplicity. Note that $\nu_n >0$ for every $n$ by (\ref{assumption3}) and~\eqref{e:t}, and that $(u_n, \lambda_n = 1/ \nu_n)$ is a sequence of eigencouples for the eigenvalue problem~\eqref{e:eigpb}.

As a final remark, we recall that the value of $\lambda_n$ is provided by the so-called Reyleigh min-max principle, namely 
\begin{equation}
   \label{e:minmax}
              \lambda_n = 
              \min_{ S \in \mathcal{S}_n  } \max_{ u \in S \setminus \{ 0 \} } \frac{\h (u, u)}{h (u, u)}
              = 
              \max_{ u \in S_n \setminus \{ 0 \}  } \frac{\h (u, u)}{h (u, u)}
             =
             \frac{\h (u_n, u_n)}{h (u_n, u_n)}
              \quad \forall n \in \mathbb{N}.
\end{equation}
In the previous expression, $\mathcal S_n$ denotes the set of subspaces of $V$ having dimension equal to $n$ and $S_n$ denotes the subspace generated by the first $n$ eigenfunctions.

\subsection{An abstract lemma concerning eigenvalue stability}
\label{ss:lemma}
We now consider two closed subspaces $V_a$, $V_b \subseteq H$ satisfying assumptions~\eqref{assumption3} and~\eqref{e:rellichab} above and we denote by $(u^a_n, \lambda^a_n)$ and 
$(u^b_n, \lambda^b_n)$ the sequences solving the corresponding eigenvalue problem~\eqref{e:eigpb} in $V=V_a$ and $V= V_b$ respectively. We fix $n \in \mathbb N$ and as before we denote by $S^b_n$ the subspace 
\begin{equation}
\label{e:san}
     S^b_n = \mathrm{span} \langle u^b_1, \dots, u^b_n \rangle. 
\end{equation}
We denote by $P: H \to V_a$ the projection of $H$ onto $V_a$, namely
\begin{equation}
\label{e:proj}
        \forall \, u \in H, \; \; \h (u- Pu, v ) =0 \; \; \forall \, v \in V_a. 
\end{equation}
By relying on an argument due to Birkhoff, De Boor, Swartz and Wendroff~\cite{BBSW} we get the main result of the present section. 
\begin{lem}
\label{l:ab}
Fix $n \in \mathbb N$ and assume there are constants $A$ and $B$, possibly depending on $n$, such that $A>0$, $0 < B <1$ and, for every $u \in S^b_n$, 
\begin{equation}
\label{e:A}
             \| Pu - u \|^2_{\h} \leq A \| u \|^2_h
\end{equation}
and 
\begin{equation}
\label{e:B}
             \| Pu - u \|^2_{h} \leq B \| u \|^2_h.
\end{equation}
Then 
\begin{equation}
\label{e:TOT}
           \lambda^a_n \leq \lambda^b_n + \frac{A}{(1 - \sqrt B)^2}.
\end{equation}
\end{lem}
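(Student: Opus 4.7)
The plan is to apply the Rayleigh min--max characterization~\eqref{e:minmax} of $\lambda^a_n$ with the $n$-dimensional test subspace $W := P(S^b_n) \subseteq V_a$. Since
$$
\lambda^a_n \;\le\; \max_{w \in W \setminus \{0\}} \frac{\h(w,w)}{h(w,w)}
$$
whenever $\dim W = n$, the proof splits into (a) verifying that $P$ is injective on $S^b_n$, so that $W$ really has dimension $n$, and (b) establishing the uniform bound
$$
\frac{\h(Pu,Pu)}{h(Pu,Pu)} \;\le\; \lambda^b_n \;+\; \frac{A}{(1-\sqrt{B})^2} \qquad \forall\, u \in S^b_n \setminus \{0\}.
$$

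For (a), I would argue that if $Pu = 0$ for some $u \in S^b_n$, then~\eqref{e:B} gives $\|u\|_h^2 = \|Pu - u\|_h^2 \le B\|u\|_h^2$, so $(1 - B)\|u\|_h^2 \le 0$; since $B < 1$, this forces $\|u\|_h = 0$, and \eqref{assumption3} then yields $u = 0$. For (b), the three structural ingredients available are the Pythagorean identity $\h(u,u) = \h(Pu,Pu) + \|u - Pu\|_\h^2$ (a direct consequence of the $\h$-orthogonality~\eqref{e:proj} of $P$); the Rayleigh inequality $\h(u,u) \le \lambda^b_n\, h(u,u)$ on $S^b_n$, obtained by applying~\eqref{e:minmax} inside $V_b$; and the lower estimate $h(Pu,Pu) \ge (1 - \sqrt{B})^2\, h(u,u)$, which follows from~\eqref{e:B} via the reverse triangle inequality $\|Pu\|_h \ge \|u\|_h - \|u - Pu\|_h$.

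The heart of the argument will then be the algebraic decomposition
$$
\h(Pu,Pu) - \lambda^b_n\, h(Pu,Pu) \;=\; \bigl[\h(u,u) - \lambda^b_n h(u,u)\bigr] \;-\; \|u - Pu\|_\h^2 \;+\; \lambda^b_n \bigl[h(u,u) - h(Pu,Pu)\bigr],
$$
whose first bracket is nonpositive by Rayleigh in $V_b$. The main obstacle I expect lies in handling the last two terms together so that the prefactor $\lambda^b_n$ is eliminated: a crude Cauchy--Schwarz estimate on $h(u,u) - h(Pu,Pu)$ gives only the weaker multiplicative bound $\lambda^a_n \le \lambda^b_n / (1 - \sqrt{B})^2$. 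The BBSW-type cancellation alluded to by the authors should, through the $\h$-orthogonality identity $\h(u, u - Pu) = \|u - Pu\|_\h^2$, allow the $\lambda^b_n$ in the cross term to be absorbed into $\|u - Pu\|_\h^2$; hypothesis~\eqref{e:A} will then replace this quantity by $A\|u\|_h^2$, and division by $h(Pu,Pu) \ge (1 - \sqrt{B})^2 \|u\|_h^2$ will produce the clean additive remainder $A/(1-\sqrt{B})^2$. Taking the maximum over $u \in S^b_n \setminus \{0\}$ will then yield~\eqref{e:TOT}.
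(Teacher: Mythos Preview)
Your overall framework is correct and matches the paper: use the min--max principle with the test subspace $P(S^b_n)$, verify injectivity of $P|_{S^b_n}$ via~\eqref{e:B}, and establish the lower bound $h(Pu,Pu)\ge(1-\sqrt{B})^2 h(u,u)$. These parts are fine, and the algebraic decomposition you write is correct.

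The genuine gap is exactly where you say the ``BBSW-type cancellation \ldots should allow the $\lambda^b_n$ in the cross term to be absorbed.'' The mechanism you propose does not work. The identity $\h(u,u-Pu)=\|u-Pu\|_\h^2$ is true but irrelevant here: to pass from $\lambda^b_n\, h(u,u-Pu)$ to $\h(u,u-Pu)$ you would need the eigenfunction relation $\h(u^b_k,v)=\lambda^b_k\, h(u^b_k,v)$ with the test vector $v=u-Pu$. That relation, however, holds only for $v\in V_b$, and $u-Pu$ has no reason to lie in $V_b$ (since $Pu\in V_a$). Without this, the term $\lambda^b_n[h(u,u)-h(Pu,Pu)]$ can only be estimated crudely, which reintroduces the multiplicative factor $\lambda^b_n$ you are trying to avoid.

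The paper's resolution --- and this is the actual BBSW trick --- is to introduce a second projection $p_n:H\to S^b_n$, the $h$-orthogonal projection onto the finite-dimensional eigenspace. One decomposes $Pu=(Pu-p_n\circ Pu)+p_n\circ Pu$. Because $p_n\circ Pu\in S^b_n$, the Rayleigh bound in $V_b$ applies to it, and the $h$-orthogonality $h(Pu-p_n\circ Pu,v)=0$ for $v\in S^b_n$ upgrades to $\h$-orthogonality precisely because the test vectors now lie in $S^b_n$, where the eigenfunction relation is available. Together with $h(Pu,Pu)\ge h(p_n\circ Pu,p_n\circ Pu)$ this yields
\[
\frac{\h(Pu,Pu)}{h(Pu,Pu)}\le\lambda^b_n+\frac{\h(Pu-p_n\circ Pu,\,Pu-p_n\circ Pu)}{h(Pu,Pu)},
\]
and a second orthogonality (now using $p_n\circ Pu-u\in S^b_n$) shows the numerator on the right is at most $\h(Pu-u,Pu-u)\le A\,h(u,u)$. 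This step is missing from your outline and cannot be bypassed with the tools you list.
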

\begin{proof}
We proceed in several steps.

\noindent {\sc $\diamond$  Step 1.}  From~\eqref{e:B} we get 
 that for every $u \in S^b_n$ one has
 \begin{equation}
 \label{e:pu}
      \|  P u \|_h = \| P u - u + u \|_h \geq  \| u \|_h - \| P u - u \|_h \geq \| u \|_h \big( 1 - \sqrt B \big).
 \end{equation}
 In particular the restriction of the projection $P$ to $S^b_n$ is injective because $ S^b_n \subseteq V_b$ and hence $\| u\|_{h} =0$ implies $u = 0$. Hence, from the min-max principle~\eqref{e:minmax} we get 
 \begin{equation}
 \label{e:pr2}
            \lambda^a_n \leq  \max_{ u \in S^b_n \setminus \{ 0 \} } \frac{\h (Pu,P u)}{h (Pu, Pu)}. 
 \end{equation}
 \noindent {\sc $\diamond$  Step 2.}  To provide an estimate on the right hand side of~\eqref{e:pr2}, we introduce the auxiliary function $p_n : H \to S^b_n$ defined as follows: for every $z \in H$, $p_n z \in S^b_n$ is the unique solution of the minimum problem
 \begin{equation}
 \label{e:min}
      h ( z- p_n z, z- p_n z) = \inf_{ v \in S^b_n}  \Big\{ h ( z- v, z- v) \Big\}. 
 \end{equation}
 Loosely speaking, $p_n$ is the orthogonal projection of  $H$ onto $S^b_n$ with respect to the bilinear form $h$. The argument to show that the minimum problem~\eqref{e:min} admits a solution which is also unique is standard, but for completeness we provide it. 
 
We introduce a minimizing sequence $\{ v_k \} \subseteq S^b_n$, then we have that the sequence ${h ( z- v_k, z- v_k)}$ is bounded. From (\ref{e:c1}) we infer that the bilinear form $h$ satisfies the Cauchy-Schwarz inequality and hence that 
$$
    \sqrt{ h ( z- v_k, z- v_k) } \ge \sqrt{ h (v_k, v_k)} - \sqrt{ h (z, z)}, 
$$  
which implies that the sequence  $ h (v_k, v_k)$ is also bounded. Since $S^b_n \subseteq V_b$, then by (\ref{assumption3}) we have that the bilinear form $h$ is actually a scalar product on $S^b_n$. Since $S^b_n$ has finite dimension, from the bounded sequence $\{ v_k \}$ we can extract a converging subsequence  $\{ v_{k_j} \}$, namely  $h (v_{k_j} - v_0, v_{k_j} - v_0) \to 0$ as $j \to + \infty$, 
 for some $v_0 \in S^b_n$. Hence, 
  $$
       h ( z-  v_{k_j}, z- v_{k_j}) \to 
         h ( z- v, z- v_0) 
 $$
as $j \to + \infty$. This implies that $v_0$ is a solution of~\eqref{e:min} and we set $v_0= p_n z$. To establish uniqueness, we first observe that, given $z\in H$, any $p_n z$ solution of the minimization problem~\eqref{e:min} satisfies the Euler-Lagrange equation 
\begin{equation}
\label{e:el}
          h (z - p_n z, v) =0 \quad \forall \; v \in S^b_n. 
\end{equation}
Assume by contradiction that~\eqref{e:min} admits two distinct solutions $p^1_n z$ and $p^2_n z$, then from~\eqref{e:el} we deduce that $h (p^1_n z - p^2_n z, v) =0$ for every $v \in S^b_n$. By taking $v= p^1_n z - p^2_n z$ and in view of (\ref{assumption3}) we obtain $p^1_n z = p^2_n z$. \\
\noindent {$\diamond$  \sc Step 3.} Next we show that
 \begin{equation}
 \label{e:keye}
          \lambda_n^a \leq \lambda_n^b + \max_{ u \in S^b_n \setminus \{ 0 \} } \frac{\h (Pu - p_n \circ Pu,Pu - p_n \circ Pu)}{h (Pu, Pu)},
 \end{equation}
where $ p_n \circ P$ denotes the composition of $p_n$ and $P$. To obtain~\eqref{e:keye} we observe that for every $u \in S^b_n$ we have 
 \begin{equation}
 \label{e:st2}
 \begin{split}
           \h (Pu,P u)  & =  \h (Pu - p_n \circ P u + p_n \circ P u , Pu - p_n \circ P u + p_n \circ P u )  \\
         & =  \h   (Pu - p_n \circ P u , Pu - p_n \circ P u  ) + \h  (p_n \circ P u , p_n \circ P u ),   \\
 \end{split}
 \end{equation}
where we have used that
\begin{equation}
\label{e:perp}
          \h (Pu - p_n \circ P u  , p_n \circ P u ) =0. 
\end{equation}
Note also that 
\begin{equation}
\label{e:ine}
           h ( Pu, Pu) \geq h (p_n \circ P u, p_n \circ P u)  
\end{equation}
for every $u \in S^b_n$. We assume for the moment that (\ref{e:perp}) and (\ref{e:ine}) are true as their proof will be provided in Step 5 and Step 6.

Combining~\eqref{e:st2} and~\eqref{e:ine} we obtain 
that for every $u \in S^b_n \setminus \{ 0\}$ such that $p_n \circ P u \neq 0$ we have 
\begin{equation}
\label{e:b1}
\begin{split}
         \frac{\h (Pu,P u)}{h (Pu, Pu)} & =
         \frac{ \h   (Pu - p_n \circ P u , Pu - p_n \circ P u  )}{h (Pu, Pu)} + 
         \frac{\h  (p_n \circ P u , p_n \circ P u )}{{h (Pu, Pu)}} \\ & \leq 
         \frac{ \h   (Pu - p_n \circ P u , Pu - p_n \circ P u  )}{h (Pu, Pu)} +  
         \frac{\h  (p_n \circ P u , p_n \circ P u )}{{h (p_n \circ Pu, p_n \circ Pu)}} \\ & \leq
         \frac{ \h   (Pu - p_n \circ P u , Pu - p_n \circ P u  )}{h (Pu, Pu)}  +
         \lambda_n^b.              \\
\end{split}         
\end{equation}
In the previous chain of inequalities we have used that $p_n \circ P$ attains values in $S_n^b$ and that 
$$
    \lambda_n^b = \max_{ w \in S_n^b \setminus \{ 0\}} \frac{\h (w, w)}{ h(w, w)}.
$$
If $p_n \circ P u = 0$, we have 
\begin{equation}
\label{e:b2}
    \frac{\h (Pu,P u)}{h (Pu, Pu)}  \leq \frac{ \h   (Pu - p_n \circ P u , Pu - p_n \circ P u  )}{h (Pu, Pu)}  +
         \lambda_n^b
 \end{equation}        
since $\lambda_n^b > 0$. By combining~\eqref{e:b1} and~\eqref{e:b2} with~\eqref{e:pr2} we eventually get~\eqref{e:keye}. \\
\noindent {\sc $\diamond$  Step 4.}
We now conclude the argument by establishing~\eqref{e:TOT}. First, observe that, for every $u \in S^b_n$, 
one has  
\vspace{0.2cm}

$\displaystyle{ \h   (Pu - p_n \circ P u , Pu - p_n  \circ P u  ) }$
\begin{eqnarray}
\label{e:b3}
       & \leq& \h (Pu - p_n \circ P u , Pu - p_n \circ P u  )   +  \h ( p_n \circ P u - u,  p_n \circ P u - u )   \notag \\
         &=& \h   (Pu - p_n \circ Pu + p_n \circ Pu  - u,  Pu - p_n \circ Pu + p_n \circ Pu  - u  ) \notag \\
         &=& \h   (Pu - u   , Pu - u  ). \notag 
\end{eqnarray}
In the previous formula we have used the equality
\begin{equation}
\label{e:st3}  
           \h  (Pu - p_n \circ P u  ,  p_n \circ P u  - u) =0 ,  
\end{equation}
which is proved in Step 5. 
Combining~\eqref{e:b2},~\eqref{e:b3} along with~\eqref{e:A} and~\eqref{e:pu} we finally obtain
\begin{equation}
\label{e:final}
       \lambda_n^a \leq \lambda_n^b  + \max_{u \in S^b_n \setminus \{0 \}}\frac{\h  (Pu - u   , Pu - u  )}{ h ( Pu, Pu)}
       \leq  \lambda_n^b +  \frac{A \, h (u, u)}{ (1 - \sqrt{B}) h (u, u) },
\end{equation}
which concludes the proof of Lemma~\ref{l:ab} provided that~\eqref{e:perp},~\eqref{e:ine} and~\eqref{e:st3} are true.

{\sc $\diamond$ Step 5. } We establish the proof of~\eqref{e:perp} and~\eqref{e:st3}. Observe that, if $z \in V_b$, then the following implication holds true:
\begin{equation}
\label{e:impli}
    h (z, v) = 0 \; \; \forall \, v \in S^b_n \implies \h (z, v) = 0 \; \; \forall \, v \in S^b_n. 
\end{equation}
Indeed, $v$ is a linear combination of the eigenfunctions $u^b_1, \dots, u^b_n$ and hence 
$$
      \h (z, v) =   \sum_{i=1}^n v_i \h (z, u^b_i) =  \sum_{i=1}^n \lambda^b_i v_i h (z, u^b_i) =0. 
$$
To establish~\eqref{e:perp}, we observe that $p_n \circ Pu \in S^b_n$. Also, by the property~\eqref{e:el} of the projection $p_n$ we have $h( Pu- p_n \circ Pu, v) =0 $ for every $ v \in S^b_n$. Hence by applying~\eqref{e:impli} we obtain~\eqref{e:perp}. To prove~\eqref{e:st3}
we can repeat the same argument. Indeed, if $u  \in S^b_n$, then $[u - p_n \circ Pu ]  \in S^b_n$ and~\eqref{e:st3} follows from~\eqref{e:impli}.

{\sc $\diamond$ Step 6. } Finally we establish~\eqref{e:ine}. Note that~\eqref{e:el} implies 
$$
    h (p_n \circ P u, Pu-p_n \circ Pu) =0
$$
for every $u \in S^b_n$. Hence 
\begin{equation*}
\begin{split}
          h (p_n \circ Pu, p_n \circ Pu) & \leq h (p_n \circ Pu, p_n \circ Pu) + 
           h (Pu - p_n \circ Pu, Pu - p_n \circ Pu) \\
          & = h (Pu, Pu)
\end{split}
\end{equation*}
for every $u \in S^b_n$.
\end{proof}

\section{Stability estimates for Dirichlet eigenvalues}
\label{s:proofmain}
\subsection{The Dirichlet eigenvalue problem in Reifenberg-flat domains}
Given an open and bounded set $\Omega$, $(u, \lambda) \in (H^1_0 (\Omega) \setminus \{ 0\} )\times \mathbb R$ is an eigencouple for the Dirichlet Laplacian in $\Omega$ if
\begin{equation}
\label{e:dpw}
          \int_{\Omega} \nabla u (x) \cdot \nabla v (x) dx = \lambda \int_{\Omega}
         u (x) \, v (x) dx \qquad \forall \,  v \in H^1_0 (\Omega).
\end{equation}
In this section, we apply the abstract framework introduced in Section~\ref{s:estpro} to study how the eigenvalues $\lambda$ satisfying~\eqref{e:dpw} depend on $\Omega$. Let $\Omega_a$ and $\Omega_b$ two Reifenberg-flat domains and denote by $D$ a ball containing both $\Omega_a$ and $\Omega_b.$ We set 
$$
   H : = H^1_0 (D), \quad \mathcal H(u, v) : = \int_D \nabla u (x) \cdot \nabla v (x) dx, \quad h(u, v) : = \int_D u(x) v(x) dx.  
$$
Note that~\eqref{e:poinab} is satisfied because of Poincar\'e-Sobolev inequality. We also set
$$
    V_a : = H^1_0 (\Omega_a) \qquad V_b : = H^1_0 (\Omega_b)  
$$
and observe that they can be viewed as two subspaces of $H^1_0 (D)$ by extending the functions of $H^1_0 (\Omega_a)$ and $H^1_0 (\Omega_b) $ by $0$ outside $\Omega_a$ and $\Omega_b$ respectively. The compactness of the inclusion~\eqref{e:rellichab} is guaranteed by Rellich-Kondrachov Theorem. Also, the projection $P=P^{\mathcal D}_{\Omega_a}$ defined by~\eqref{e:proj} satisfies in this case 
\begin{equation}
\label{e:pda}
         ||\nabla (u - P^{\mathcal D}_{\Omega_a}u)||_{L^2(D)} = \min_{v \in H^1_0 (\Omega_a)} \big\{ ||\nabla (u - v)||_{L^2(D)}  \big\}.
\end{equation}
By using the above notation, the Dirichlet problem~\eqref{e:dpw} reduces to~\eqref{e:eigpb} and hence it is solved by a sequence of eigencouples $(\lambda_n, u_n)$ with $\lambda_n >0$ for every $n$ and $\lim_{n \to + \infty}\lambda_n = + \infty.$ We employ the same notation as in Section~\ref{ss:lemma} and we denote by $\{ \lambda^a_n\}$ and $\{ \lambda_n^b \}$ the eigenvalues of the Dirichlet problem in $\Omega_a$ and $\Omega_b$, respectively. By applying Lemma~\ref{l:ab}, we infer that to control the difference $|\lambda_n^a - \lambda_n^b|$ it is sufficient to provide an estimate on suitable projections. This is the content of the following subsection. 

\subsection{Estimates on the projection of Dirichlet eigenfunctions}

\subsubsection{Boundary estimate on the gradient of Dirichlet eigenfunctions}

We now establish a decay result for the gradient of Dirichlet eigenfunctions. The statement is already in \cite[Proposition 14]{lm2}, but the proof contains a gap. This is why we hereafter give a different and complete proof,  which is based on techniques from Alt, Caffarelli and Friedman \cite{acf}. We begin with a monotonicity Lemma.

\begin{lem} \label{monot} Let $\Omega \subseteq \R^N$ be a bounded domain and let $x_0 \in \partial \Omega$. Given a radius $r>0$, we denote by $\Omega_r^+:=B(x_0,r)\cap \Omega$, by $S_r^+:=\partial B(x_0,r) \cap \Omega$ and by $\sigma(r)$ the first Dirichlet eigenvalue of the Laplace operator on the spherical domain $S_r^+$. If there are constants $r_0>0$ and $\sigma^{\ast} \in ]0, N-1[$ such that
$$ 
    \inf_{0< r < r_0} (r^{2}\sigma(r)) \ge \sigma^*,
$$ then the following holds. 
If $(u, \lambda)$ is an eigencouple for the Dirichlet Laplacian in $\Omega$, then the function 
\begin{equation}
\label{e:f}r\mapsto \Bigg(\frac{1}{r^\beta}\int_{\Omega_r^+} \frac{|\nabla u |^2}{|x-x_0|^{N-2}} \;dx \Bigg)+C_0 r^{2-\beta}
\end{equation}
is non decreasing on $]0,r_0[$. In the previous expression, the exponent $\beta \in ]0, 2[$ is defined by the formula
$$
     \beta:  =\sqrt{(N-2)^2 +4\sigma^*}-(N-2) 
$$
and $C_0$ is a suitable constant satisfying 
\begin{equation}
\label{constantC}
    C_0 =  \frac{\beta C (N)}{(2-\beta)}\lambda \|u\|_{\infty}^2.
\end{equation}
We also have the bound
\begin{eqnarray}
         \int_{\Omega_{r_0/2}^+} \frac{|\nabla u |^2}{|x-x_0|^{N-2}} \;dx \leq   
          C(N,r_0,\beta)\lambda(1+\lambda^{\frac{N}{2}})\|u\|_2^2,
                       \label{boundM}
\end{eqnarray}
{where 
$$
   C(N,r_0,\beta) =C(N){2^{\beta} r_0^{-\beta}} \left[r_0^{2-N}\frac{1}{\beta}+r_0^{3\beta}+\frac{\beta}{2-\beta}2^{-\beta}r_0^2\right].
$$}
\end{lem}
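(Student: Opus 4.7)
The plan is to implement a classical Alt--Caffarelli--Friedman style monotonicity argument, adapted to the eigenvalue equation $-\Delta u = \lambda u$, using the weight $G(x) = |x-x_0|^{2-N}$, which is harmonic on $\R^N\setminus\{x_0\}$. Setting $\Phi(r) = \int_{\Omega_r^+} G |\nabla u|^2\, dx$, the first step is an integration by parts. Starting from $\nabla \cdot (u G \nabla u) = G|\nabla u|^2 + u\nabla G\cdot\nabla u + uG\Delta u$ together with $\Delta u = -\lambda u$, and then rewriting the cross term via $\int u\nabla u \cdot \nabla G = \tfrac{1}{2}\int \nabla(u^2)\cdot\nabla G = \tfrac{1}{2}\int_{\partial \Omega_r^+} u^2\, \partial_\nu G\, d\sigma$ (using $\Delta G \equiv 0$ on $\Omega_r^+$, after excising $B(x_0, \rho)$ and letting $\rho \to 0^+$), the Dirichlet condition $u = 0$ on $\partial \Omega \cap B(x_0, r)$ kills all boundary contributions except on $S_r^+$, where $G \equiv r^{2-N}$ and $\partial_\nu G = (2-N) r^{1-N}$. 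This produces the representation
\[
\Phi(r) = r^{2-N}\int_{S_r^+} u\, \partial_\nu u\, d\sigma + \frac{N-2}{2 r^{N-1}}\int_{S_r^+} u^2\, d\sigma + \lambda \int_{\Omega_r^+} u^2 G\, dx.
\]

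Next, I couple this identity with the coarea derivative $\Phi'(r) = r^{2-N}\int_{S_r^+} |\nabla u|^2\, d\sigma = r^{2-N}[\int(\partial_\nu u)^2 + \int|\nabla_T u|^2]$ (split into radial and tangential parts on $S_r^+$). Estimating the cross term $\int u\, \partial_\nu u$ by Cauchy--Schwarz with a free parameter $\varepsilon > 0$, and controlling $\int_{S_r^+} u^2$ by the spherical Poincar\'e inequality $\int_{S_r^+} u^2 \le \frac{1}{\sigma(r)}\int_{S_r^+}|\nabla_T u|^2 \le \frac{r^2}{\sigma^*}\int_{S_r^+}|\nabla_T u|^2$ (valid because $u|_{S_r^+}$ vanishes on $\partial S_r^+ \subseteq \partial \Omega$ and by hypothesis $r^2\sigma(r) \ge \sigma^*$), the critical calibration $\varepsilon = 2r/\beta$ with $\beta = \sqrt{(N-2)^2 + 4\sigma^*} - (N-2)$ simultaneously equates the resulting coefficients of $r^{2-N}\int(\partial_\nu u)^2$ and $r^{2-N}\int|\nabla_T u|^2$ to the common value $r/\beta$, via the algebraic identity $\beta(\beta + 2(N-2)) = 4\sigma^*$. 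This yields $\Phi(r) \le \frac{r}{\beta}\Phi'(r) + \lambda\int_{\Omega_r^+} u^2 G\, dx$, and bounding the remaining integral by $\|u\|_\infty^2 \int_{B(x_0,r)}|x-x_0|^{2-N}\, dx \le C(N)\|u\|_\infty^2 r^2$ produces the differential inequality $r\Phi'(r) \ge \beta \Phi(r) - \beta C(N)\lambda\|u\|_\infty^2 r^2$. Multiplying by $r^{-\beta-1}$ rewrites this as $\tfrac{d}{dr}\bigl[r^{-\beta}\Phi(r) + C_0 r^{2-\beta}\bigr] \ge 0$ with $C_0$ exactly as in \eqref{constantC}.

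For the absolute bound \eqref{boundM}, I combine the monotonicity between $r = r_0/2$ and $r = r_0$, which gives $\Phi(r_0/2) \le 2^{-\beta}\Phi(r_0) + 2^{-\beta} C_0 r_0^2$, with the elementary annular estimate $\Phi(r_0) - \Phi(r_0/2) \le (r_0/2)^{2-N}\int_\Omega |\nabla u|^2\, dx = (r_0/2)^{2-N}\lambda\|u\|_2^2$. Solving the resulting self-bounding inequality for $\Phi(r_0/2)$ (permissible since $\beta > 0$, hence $1 - 2^{-\beta} > 0$) bounds it by a multiple of $C_0 r_0^2 + r_0^{2-N}\lambda\|u\|_2^2$; substituting the $L^\infty$ estimate $\|u\|_\infty \le C(N)\lambda^{N/4}\|u\|_2$ of Proposition~\ref{propinfty1} into $C_0$ then yields the desired form $C(N, r_0, \beta)\lambda(1 + \lambda^{N/2})\|u\|_2^2$. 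I expect the main technical subtlety to lie in the excision argument used to tame the singularity of $G$ at $x_0$: one has to verify that the boundary contributions over $\partial B(x_0, \rho)\cap \Omega$ vanish along a suitable subsequence $\rho_n \to 0$, which follows from $\Phi(r) < \infty$ (forcing $\rho_n^{3-N}\int_{\partial B(x_0,\rho_n)}|\nabla u|^2 \to 0$ along some sequence) combined with the boundedness of $u$ and the fact that the hypothesis $r^2\sigma(r) \ge \sigma^*$ itself provides enough relative capacity at $x_0$ (a Wiener-type condition) to ensure $u$ vanishes there in a sufficiently strong sense.
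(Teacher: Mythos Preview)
Your overall strategy coincides with the paper's: an Alt--Caffarelli--Friedman monotonicity computation with weight $|x-x_0|^{2-N}$, Cauchy--Schwarz plus the spherical Poincar\'e inequality to reach the differential inequality $\Phi(r)\le \tfrac{r}{\beta}\Phi'(r)+K r^2$, and then integration to the stated monotone quantity. Your calibration $\varepsilon=2r/\beta$ and the algebraic identity $\beta(\beta+2(N-2))=4\sigma^*$ are exactly what the paper obtains (with a different parametrisation of the Young parameter).

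There is, however, a genuine gap in your treatment of the singularity of $G$ at $x_0$. Your excision argument needs the inner boundary terms on $S_\rho^+$ to vanish along a subsequence $\rho_n\to 0$; after using the spherical Poincar\'e bound these terms are controlled by $\rho^{3-N}\int_{S_\rho^+}|\nabla u|^2$, and you invoke ``$\Phi(r)<\infty$'' to force the required liminf. But $\Phi(r)<\infty$ is precisely what the identity is supposed to deliver: from $u\in H^1\cap L^\infty$ alone one only gets $\liminf_{\rho\to 0}\rho\int_{S_\rho^+}|\nabla u|^2=0$, which for $N\ge 4$ is far weaker than what you need. The appeal to a Wiener-type consequence of $r^2\sigma(r)\ge\sigma^*$ is plausible in spirit but would require an independent quantitative decay lemma that you have not supplied.

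The paper sidesteps this entirely by a different regularisation: it replaces $|x|^{2-N}$ by the smooth superharmonic approximation $|x|_\varepsilon^{2-N}=(|x|^2+\varepsilon)^{(2-N)/2}$, and simultaneously approximates $u$ by $u_n\in C_c^\infty(\Omega)$ (so that all boundary integrals on $\partial\Omega$ are trivially zero). Because $\Delta(|x|_\varepsilon^{2-N})\le 0$, one obtains an \emph{inequality} rather than your equality, but that is all that is needed, and passing $n\to\infty$, then $\varepsilon\to 0^+$ by monotone convergence yields both the key inequality and, as a free byproduct, the finiteness of $\Phi(r)$. The paper further averages in $r$ over $[r,r+\delta]$ before passing to the limit in $n$, to turn the spherical traces into volume integrals where the $H^1$ convergence applies directly. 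This mechanism is what replaces your excision step.

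On the other hand, your route to the absolute bound \eqref{boundM} is different from the paper's and works cleanly once $\Phi<\infty$ is known: you combine the monotonicity between $r_0/2$ and $r_0$ with the trivial annular bound $\Phi(r_0)-\Phi(r_0/2)\le (r_0/2)^{2-N}\lambda\|u\|_2^2$ and solve the resulting self-referential inequality using $2^{-\beta}<1$. The paper instead uses a Chebyshev/coarea argument to locate a good radius $r_1\in[r_0/2,r_0]$ with controlled spherical energy and plugs $r_1$ directly into the differential inequality. Both arguments give a constant of the stated type $C(N,r_0,\beta)$; yours is arguably more direct.
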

\begin{proof} 
We assume without loss of generality that $x_0=0$ and to simplify notation we denote by $B_r$ the ball $B(0, r)$. Also, in the following we identify $u \in H^1_0(\Omega)$ with the function $u \in H^1(\R^N)$ obtained by setting 
$u(x) =0$ if $x \in \Omega^c$. 

The proof is based on the by now standard monotonicity Lemma of Alt, Caffarelli and Friedman \cite[Lemma 5.1]{acf} and is divided in the following steps. 

\noindent {\sc $\diamond$ Step 1.} We prove the following inequality: for a.e. $r>0$,
\begin{eqnarray}
2\int_{\Omega_r^+}|\nabla u|^2 |x|^{2-N}dx &\leq & r^{2-N}\int_{S_r^+} 2u \frac{\partial u}{\partial \nu}dS +(N-2)r^{1-N}\int_{S^+_r} u^2dS \notag \\
&\quad & \quad \quad + \;2\lambda\int_{\Omega^+_r}u^2 |x|^{2-N} dx. \label{mono00}
\end{eqnarray}
We recall that the eigenfunction $u \in C^{\infty} (\Omega)$ (see the book by Gilbarg and Trudinger~\cite[page 214]{gt}). Although~\eqref{mono00} can be formally obtained through an integration by parts, the rigorous proof is slightly technical. Given $\varepsilon>0$, we set
$$|x|_{\varepsilon}:= \sqrt{x_1^2+x_2^2+\dots +x_N^2 +\varepsilon},$$
so that $|x|_{\varepsilon}$ is a $C^\infty$ function.  A direct computation shows that
$$\Delta(|x|_{\varepsilon}^{2-N})=(2-N)N\frac{\varepsilon}{|x|_{\varepsilon}^{N+2}}\leq 0,$$ 
in other words $|x|_{\varepsilon}^{2-N}$ is superharmonic.

Let $u_n \in C^\infty_c(\Omega)$ be a sequence of functions converging in $H^1(\R^N)$ to $u$. By using the equality
\begin{eqnarray}
\Delta(u_n^2)=2|\nabla u_n|^2 + 2 u_n \Delta u_n  \label{mono1}
\end{eqnarray}
 we deduce that
\begin{eqnarray}
2\int_{\Omega^+_r}|\nabla u_n|^2 |x|_\varepsilon^{2-N} = \int_{\Omega^+_r} \Delta(u_n^2) |x|_\varepsilon^{2-N} - 2\int_{\Omega^+_r}(u_n \Delta u_n) |x|_\varepsilon^{2-N} . \label{mono2}
\end{eqnarray}
Since $\Delta( |x|_\varepsilon^{2-N})\leq 0$,  the Gauss-Green Formula yields
\begin{eqnarray}
\int_{\Omega^+_r} \Delta(u_n^2) |x|_\varepsilon^{2-N}dx  = \int_{\Omega^+_r} u_n^2 \Delta( |x|_\varepsilon^{2-N})dx + I_{n,\varepsilon} (r) \leq I_{n,\varepsilon} (r) \;\label{mono3},
\end{eqnarray}
where 
$$I_{n,\varepsilon}(r)=(r^2+\varepsilon)^{\frac{2-N}{2}}\int_{\partial \Omega^+_r} 2u_n \frac{\partial u_n}{\partial \nu}dS +(N-2)\frac{r}{(r^2+\varepsilon)^{\frac{N}{2}}}\int_{\partial \Omega^+_r} u_n^2dS.$$ 
In other words, \eqref{mono2} reads
\begin{eqnarray}
2\int_{\Omega^+_r}|\nabla u_n|^2 |x|_\varepsilon^{2-N}dx \leq I_{n,\varepsilon} (r)- 2\int_{\Omega^+_r}(u_n \Delta u_n) |x|_\varepsilon^{2-N} dx. \label{mono3bis}
\end{eqnarray}
We now want to pass to the limit, first as  $n\to +\infty$, and then as $\varepsilon\to 0^+$. To tackle some technical problems, we first integrate over $r \in [r,r+\delta]$ and divide by $\delta$, thus obtaining
\begin{eqnarray}
\frac{2}{\delta}\int_{r}^{r+\delta}\left(\int_{\Omega^+_{\rho}}|\nabla u_n|^2 |x|_\varepsilon^{2-N}dx\right) d \rho\leq  A_n-R_n, \label{mono3ter}
\end{eqnarray}
where 
$$A_n=\frac{1}{\delta}\int_{r}^{r+\delta}I_{n,\varepsilon}(\rho) d\rho$$
and 
$$R_n= 2\frac{1}{\delta}\int_{r}^{r+\delta}\left(\int_{\Omega^+_\rho}(u_n \Delta u_n) |x|_{\varepsilon}^{2-N} dx\right) d\rho.$$
First, we investigate the limit of $A_n$ as $n \to + \infty$: by applying the coarea formula, we rewrite $A_n$ as
$$
    A_n = \frac{1}{\delta} \Bigg( 2 \int_{\Omega^+_{r + \delta} \setminus \Omega^+_r}   (|x|^2+\varepsilon)^{\frac{2-N}{2}} u_n \, \nabla u_n \cdot {\frac{x}{|x|}} \, dx  +
    (N-2) \int_{\Omega^+_{r + \delta} \setminus \Omega^+_r}  \frac{|x|}{(|x|^2+\varepsilon)^{\frac{N}{2}}  } u_n^2 dx \Bigg). 
$$
Since $u_n$ converges to $u$ in $H^1(\R^N)$ when $n\to +\infty$, then by using again the coarea formula we get that 
$$A_n \longrightarrow \frac{1}{\delta}\int_{r}^{r+\delta}I_{\varepsilon}(\rho) d\rho \quad  \qquad n \to + \infty, $$
where
$$I_{\varepsilon} (\rho)=(\rho^2+\varepsilon)^{\frac{2-N}{2}}\int_{\partial \Omega^+_{\rho}} 2u \frac{\partial u}{\partial \nu}dS +(N-2)\frac{\rho}{(\rho^2+\varepsilon)^{\frac{N}{2}}}\int_{\partial \Omega^+_{\rho}} u^2dS.$$
Next, we investigate the limit of $R_n$ as $n \to + \infty$. By using Fubini's Theorem, we can rewrite $R_n$ as
$$R_n=\int_{\Omega}(u_n\Delta u_n)f(x)dx,$$ 
where $$f(x)=|x|_\varepsilon^{2-N}\frac{{ 2}}{\delta}\int_{r}^{r+\delta}{\bf 1}_{\Omega^+_\rho}(x)d\rho.$$
Since 
$$
\frac{1}{\delta}\int_{r}^{r+\delta}{\bf 1}_{\Omega^+_\rho}(x)d\rho = 
\left\{
\begin{array}{ll}
1 &\text{ if } x \in \Omega^+_{r}\\
\displaystyle{\frac{r+\delta-|x|}{\delta}} & \text{ if } x \in \Omega^+_{r+\delta}\setminus \Omega^+_{r}\\
0 &\text{ if } x \notin \Omega^+_{r+\delta}\\
\end{array}
\right.,
$$
then $f$ is Lipschitz continuous and hence by recalling $u_n \in C^{\infty}_c (\Omega)$ we get  
\begin{equation*}
\begin{split}
&      \left| \int_{\Omega} \Big( u_n \Delta u_n - u \Delta u \Big) f dx     \right| \leq 
         \left| \int_{\Omega} \Big( \Delta u_n -  \Delta u \Big) u_n f dx     \right| +
          \left| \int_{\Omega} \Big( u_n - u \Big) \Delta u  f dx     \right| \\ 
 &    =  \left| \int_{\Omega} \Big( \nabla u_n -  \nabla u \Big)  \Big( u_n \nabla f + \nabla u_n f \Big) dx     \right|     
      +
      \left| \int_{\Omega}\nabla u \Big( ( \nabla u_n - \nabla u )    f + ( u_n - u ) \nabla f \Big)dx     \right|  \\
 & \leq      \|  \nabla u_n -  \nabla u \|_{L^2 (\Omega)}  
   \Big( \| u_n \|_{L^2 (\Omega)} \| \nabla f \|_{L^{\infty} (\Omega)} + 
  \| \nabla u_n \|_{L^{2} (\Omega)} \|f \|_{L^{\infty} (\Omega) } \Big)  \phantom{\int} \\ 
  & \quad + 
  \| \nabla u  \|_{L^2(\Omega)} \Big(  \|  \nabla u_n -  \nabla u \|_{L^2 (\Omega)}    \|f \|_{L^{\infty} (\Omega)} +
  \|  u_n -   u \|_{L^2 (\Omega)}    \|\nabla f \|_{L^{\infty} (\Omega)}\Big)  \phantom{\int} \\
\end{split}
\end{equation*}
and hence the expression at the first line converges to $0$ as $n \to + \infty$. 

By combining the previous observations and by recalling that $u$ is an eigenfunction we infer that by passing to the limit $n \to \infty$ in~\eqref{mono3ter} we get 
$$
   \frac{2}{\delta}\int_{r}^{r+\delta}\left(\int_{\Omega^+_{\rho}}|\nabla u|^2 |x|_\varepsilon^{2-N}dx\right) dr
   \leq \frac{1}{\delta}\int_{r}^{r+\delta}I_{\varepsilon}(\rho) d\rho - \frac{2}{\delta}\int_{r}^{r+\delta}\left(\int_{\Omega^+_\rho}\lambda u^2 |x|_{\varepsilon}^{2-N} dx\right) d\rho.
$$
Finally, by passing to the limit $\delta \to 0^+$ and then $\varepsilon \to 0^+$ we obtain~\eqref{mono00}. \\
\noindent {\sc $\diamond$ Step 2.} We provide an estimate on the right hand side of~\eqref{mono00}. First, we observe that 
\begin{eqnarray}
\int_{\Omega_r^+}u^2 |x|^{2-N}dx\leq   \|u\|_{\infty}^2 \int_{\Omega^+_r} |x|^{2-N} dx  { \leq } C (N)  \|u\|_{\infty}^2r^2 \label{mono8}
\end{eqnarray}
and we recall that $C(N)$ denotes a constant only depending on $N$, whose exact value can change from line to line. 

Next, we point out that the definition of $\sigma^\ast$ implies that
\begin{eqnarray}
 \int_{S_r^+} u^2dS  \leq  \frac{1}{\sigma^*}r^2 \int_{S_r^+} |\nabla_{\tau}u |^2dS \qquad 
 r\in \, ]0,r_0[, \label{mono4}
 \end{eqnarray}
where $\nabla_{\tau}$ denotes the tangential gradient on the sphere. Also, let $\alpha >0$ be a parameter that will be fixed later, then by combining Cauchy-Schwarz inequality, \eqref{mono4} and the inequality $\displaystyle{ab\leq \frac{\alpha} {2}a^2 + \frac{1}{2 \alpha} b^2}$, we get
\begin{eqnarray} 
\Bigg|\int_{S_r^+} u \frac{\partial u}{\partial \nu}dS \Bigg|&\leq& \Bigg(\int_{S^+_r} u^2 dS\Bigg)^\frac{1}{2} \Bigg(\int_{S^+_r} |\frac{\partial u}{\partial \nu}|^2dS \Bigg)^\frac{1}{2} \notag \\
&\leq &  \frac{r}{\sqrt{\sigma^*}} \Bigg(\int_{S^+_r} |\nabla_\tau u|^2dS \Bigg)^\frac{1}{2} \Bigg(\int_{S^+_r} |\frac{\partial u}{\partial \nu}|^2dS \Bigg)^\frac{1}{2} \notag \\
&\leq &  \frac{r}{\sqrt{\sigma^*}}  \Bigg( \frac{\alpha}{2}\int_{S^+_r} |\nabla_\tau u|^2dS + \frac{1}{2\alpha}\int_{S^+_r} |\frac{\partial u}{\partial \nu}|^2dS \Bigg). \label{mono5}
\end{eqnarray}
Hence, 
\begin{equation}
\label{e:rhs}
\begin{split}
          r^{2-N}\int_{S_r^+} 2u \frac{\partial u}{\partial \nu}dS +(N-2)r^{1-N}\int_{S^+_r} u^2dS  & \leq 
        r^{2-N} \frac{2 r }{\sqrt{\sigma^\ast}} \left[ \frac{\alpha}{2}  
        \int_{S_r^+}  |\nabla_{\tau} u |^2 dS+ 
        \frac{1}{2\alpha}\int_{S^+_r} |\frac{\partial u}{\partial \nu}|^2dS    \right]   \\
        & \quad + (N-2) r^{1-N} \frac{1}{\sigma^\ast} r^2
         \int_{S_r^+}  |\nabla_{\tau} u |^2 dS \\
\end{split}
\end{equation}
 $$\leq r^{3-N} \left[ \left( \frac{\alpha}{\sqrt{\sigma^{\ast}}}  + \frac{N-2}{\sigma^\ast }\right) \int_{S_r^+}  |\nabla_{\tau} u |^2 dS +\frac{1}{\alpha \sqrt{\sigma^\ast}} 
         \int_{S_r^+} |\frac{\partial u}{\partial \nu}|^2 dS   \right]. $$

Next, we choose $\alpha>0$ in such a way that
$$\frac{\alpha}{\sqrt{\sigma^*}} + \frac{N-2}{\sigma^*}=\frac{1}{\alpha\sqrt{\sigma^*}},$$
namely 
$$\alpha = \frac{1}{2\sqrt{\sigma^*}}\big[\sqrt{(N-2)^2 +4\sigma^*}-(N-2) \big].$$
Hence, by combining~\eqref{mono00},~\eqref{mono4} and~\eqref{e:rhs}  
we finally get
\begin{eqnarray}
\int_{\Omega^+_r}|\nabla u|^2 |x|^{2-N}dx \leq  r^{3-N} \gamma(N,\sigma^*) \int_{S_r^+}|\nabla u|^2dS +  C (N) \lambda \|u\|_{\infty}^2 r^{2} \label{mono9},
\end{eqnarray}
where
$$\gamma(N,\sigma^*)=\Big[\sqrt{(N-2)^2 +4\sigma^*}-(N-2)\Big]^{-1}.$$
\noindent {\sc $\diamond$ Step 3.} We establish the monotonicity property. 
We set 
$$
    f(r)=\int_{\Omega_r^+}|\nabla u|^2 |x|^{2-N}dx
$$ 
and we observe that 
$$
     f'(r)=r^{2-N} \int_{S_r^+}|\nabla u|^2
     \quad a.e. \, r\in \, ]0,r_0[\, , 
$$  
hence~\eqref{mono9} implies that
\begin{eqnarray}
f(r) \leq    \gamma r f'(r) +  K r^{2},\label{mono10}
\end{eqnarray}
with $\gamma=\gamma(N,\sigma^*)$ and $K= C(N) \lambda \|u\|_{\infty}^2$.  This implies that 
$$\Big(\frac{f(r)}{r^\beta} + \frac{K \beta}{(2-\beta)}r^{2-\beta}\Big)'\geq 0,$$
with $\beta=1 / \gamma=\sqrt{(N-2)^2 +4\sigma^*}-(N-2) \in ]0,2[$. This establishes the monotonicity result. \\
\noindent {\sc $\diamond$ Step 4.}
We establish~\eqref{boundM}.

First, we observe that by combining the coarea formula with Chebychev inequality we get that for any $a>0$
$$
   \mathcal{H}^1\Big(\{t \in [r_0/2,r_0] : \int_{S_{t}^+} |\nabla u|^2 dS \geq a \}\Big)\leq \frac{1}{a}\int_{\Omega^+_{r_0}\setminus \Omega^+_{r_0/2}}|\nabla u|^2\;dx.$$
By applying this inequality with $a =\displaystyle{ \frac{4}{r_0}\int_{\Omega^+_{r_0}\setminus \Omega_{r_0/2}^+}|\nabla u|^2\;dx}$ we get that there is at least a radius $r_1\in�[r_0,r_0/2]$ such that  
$$\int_{S_{r_1}^+} |\nabla u|^2 dS\leq \frac{4}{r_0} \int_{\Omega_{r_0}^+}|\nabla u|^2\;dx\leq \frac{4}{r_0}\|\nabla u\|^2_{L^2(\Omega)}.$$
By combining~\eqref{mono9} with the fact that $r_0/2\leq r_1\leq r_0$ we infer
$$\int_{\Omega_{r_1}^+}|\nabla u|^2 |x|^{2-N}dx \leq C (N) r_0^{2-N} \gamma \|\nabla u\|^2_{L^2(\Omega)}+  C (N) \lambda \|u\|_{\infty}^2 r_0^{2}$$
and by monotonicity we have that 
\begin{eqnarray}
 (r_0/2)^{-\beta}\int_{\Omega_{r_0/2}^+} \frac{|\nabla u |^2}{|x-x_0|^{N-2}} \;dx  &\leq&  r_1^{-\beta}\int_{\Omega_{r_1}^+} \frac{|\nabla u |^2}{|x-x_0|^{N-2}} \;dx + C_0 {r_1}^{2-\beta} \notag \\
 &\leq & 2^{\beta}C (N)r_0^{2-N-\beta} \gamma \|\nabla u\|_{L^2(\Omega)}^2+2^{\beta}C (N) \lambda \|u\|_{\infty}^2 r_0^{2\beta} +C_0 r_0^{2-\beta} .\notag 
 \end{eqnarray}
Finally, by using that  $\displaystyle{C_0=\frac{\beta C (N)}{(2-\beta)}\lambda \|u\|_{\infty}^2}$, that $\|u\|_{\infty}\leq C(N)\lambda^{\frac{N}{4}} \|u\|_{L^2(\Omega)}$ (see Proposition \ref{propinfty1}), that $\|\nabla u\|_{L^2(\Omega)}^2=\lambda \|u\|_{L^2(\Omega)}^2$, and that $\max \{ \lambda,\lambda^{1+N/2} \}\leq \lambda(1+\lambda^{\frac{N}{2}})$ we get~\eqref{boundM}.
\end{proof}

We now provide an estimate on the energy of a Dirichlet eigenfunction near the boundary.

\begin{prop}
\label{propdecayDir} 
For every $\eta \in \, ]0,1[$ there is a positive constant $\varepsilon:=\varepsilon(\eta)$ such that the following holds. Given $r_0 \in \, ]0, 1[$, let $\Omega \subseteq \R^N$  be  an $(\varepsilon, r_0)$-Reifenberg-flat domain, $x_0 \in \partial \Omega$ and let $u$ be a Dirichlet eigenfunction in $\Omega$ associated with the eigenvalue $\lambda$. Then  
\begin{eqnarray}
\label{e:bdecay1}
           \int_{B(x_0,r)\cap \Omega}|\nabla u|^2\dd x \leq
            C \lambda(1+\lambda^{\frac{N}{2}})r^{N-\eta} \|u\|_2^2 \; \quad \forall \; r \in ]0,r_0/2[ \, , \label{dec00}
\end{eqnarray}
for a suitable positive constant $C= C(N, r_0, \eta)$.
\end{prop}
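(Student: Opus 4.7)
The plan is to apply the monotonicity Lemma \ref{monot} with a parameter $\sigma^{\ast}$ chosen close enough to $N-1$ that the corresponding exponent $\beta=\sqrt{(N-2)^2+4\sigma^{\ast}}-(N-2)$ satisfies $\beta\ge 2-\eta$. This is possible because $\beta\to 2$ as $\sigma^{\ast}\to N-1$, so it suffices to fix $\sigma^{\ast}=\sigma^{\ast}(\eta,N)$ slightly smaller than $N-1$, for which $\beta$ becomes slightly smaller than $2$ but still exceeds $2-\eta$.

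The nontrivial ingredient is to verify the hypothesis $\inf_{0<r<r_0}r^2\sigma(r)\ge\sigma^{\ast}$ for an $(\varepsilon,r_0)$-Reifenberg-flat domain. Here $\sigma(r)$ denotes the first Dirichlet eigenvalue of the Laplace--Beltrami operator on the spherical domain $S_r^+=\partial B(x_0,r)\cap\Omega$, and for a half-space $S_r^+$ is exactly a hemisphere with $r^2\sigma(r)=N-1$. In the Reifenberg-flat setting, the separation property (ii) in Definition \ref{defreif} combined with the flatness property (i) forces $S_r^+$ to contain a spherical cap whose angular radius differs from $\pi/2$ by a quantity $O(\varepsilon)$. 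By monotonicity of Dirichlet eigenvalues with respect to inclusion, $\sigma(r)$ is bounded below by the first eigenvalue on that slightly smaller cap, and by the continuous dependence of the spherical cap eigenvalue on the opening angle (derivable from explicit Gegenbauer solutions, or by a direct test-function argument using the hemisphere eigenfunction $x\mapsto x_N/r$ multiplied by a cut-off) one gets $r^2\sigma(r)\ge\sigma^{\ast}$ provided $\varepsilon=\varepsilon(\eta,N)$ is chosen small enough. I expect this spherical eigenvalue comparison to be the main technical obstacle.

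With this hypothesis verified, Lemma \ref{monot} applies, and the nondecreasing character of $r\mapsto r^{-\beta}f(r)+C_0\tfrac{\beta}{2-\beta}r^{2-\beta}$, where $f(r)=\int_{\Omega_r^+}|\nabla u|^2|x-x_0|^{2-N}\dd x$, yields for every $r\le r_0/2$
\[
f(r)\;\le\; r^{\beta}\Big((r_0/2)^{-\beta}f(r_0/2)+C_0\,\tfrac{\beta}{2-\beta}(r_0/2)^{2-\beta}\Big).
\]
We then estimate $f(r_0/2)$ via \eqref{boundM} and control $C_0=\tfrac{\beta C(N)}{2-\beta}\lambda\|u\|_\infty^2$ using Proposition \ref{propinfty1} to replace $\|u\|_\infty^2$ by $C\lambda^{N/2}\|u\|_2^2$. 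Both contributions are absorbed into a single factor $C(N,r_0,\eta)\lambda(1+\lambda^{N/2})\|u\|_2^2$, giving
\[
f(r)\;\le\; C(N,r_0,\eta)\,\lambda(1+\lambda^{N/2})\,\|u\|_2^2\,r^{\beta}.
\]

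To finish, I would use the elementary pointwise inequality $|x-x_0|^{2-N}\ge r^{2-N}$ valid on $B(x_0,r)$, which immediately turns the weighted integral estimate into the unweighted one:
\[
\int_{B(x_0,r)\cap\Omega}|\nabla u|^2\dd x\;\le\; r^{N-2}f(r)\;\le\; C\,\lambda(1+\lambda^{N/2})\,\|u\|_2^2\,r^{N-2+\beta}.
\]
Since $r\le r_0/2<1$ and by construction $N-2+\beta\ge N-\eta$, the right-hand side is controlled by $C\lambda(1+\lambda^{N/2})r^{N-\eta}\|u\|_2^2$, which is precisely \eqref{dec00}.
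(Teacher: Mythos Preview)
Your overall strategy coincides with the paper's: choose $\sigma^{\ast}$ close enough to $N-1$ that $\beta=\sqrt{(N-2)^2+4\sigma^{\ast}}-(N-2)$ equals $2-\eta$ (the paper takes exactly $\sigma^{\ast}=N-1-\tfrac{\eta}{4}(2N-\eta)$), verify the lower bound $\inf_{r<r_0}r^2\sigma(r)\ge\sigma^{\ast}$ via Reifenberg flatness, apply Lemma~\ref{monot}, and finish using \eqref{boundM}, Proposition~\ref{propinfty1}, and the pointwise bound $|x-x_0|^{2-N}\ge r^{2-N}$ on $B(x_0,r)$.

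There is, however, a real error in the spherical comparison step: the inclusion is backwards. You assert that $S_r^+$ \emph{contains} a spherical cap of angular radius $\pi/2+O(\varepsilon)$ and then invoke domain monotonicity to bound $\sigma(r)$ from below by the eigenvalue of ``that slightly smaller cap''. But if the cap is a subset of $S_r^+$, monotonicity gives $\sigma(r)\le\lambda_1(\mathrm{cap})$, which is the wrong direction; likewise, the test-function alternative you mention (hemisphere eigenfunction times a cut-off) only produces \emph{upper} bounds on eigenvalues, not the lower bound required here. What is needed, and what the paper uses (citing Lemma~5 of \cite{GeomPart}), is that $S_r^+=\partial B(x_0,r)\cap\Omega$ is \emph{contained in} a spherical cap slightly \emph{larger} than a hemisphere, homothetic to $S_{t^{\ast}}=\partial B(0,1)\cap\{x_N>t^{\ast}\}$ with $t^{\ast}<0$ small. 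Then monotonicity yields $r^2\sigma(r)\ge\lambda_1(S_{t^{\ast}})$, and continuity of $t\mapsto\lambda_1(S_t)$ together with $\lambda_1(S_0)=N-1$ lets one choose $\varepsilon$ small enough that $\lambda_1(S_{t^{\ast}})\ge\sigma^{\ast}$. Once this direction is corrected, the remainder of your argument goes through verbatim.
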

\begin{proof} We fix $\eta\in \, ]0,1[$ and we recall that the first eigenvalue of the spherical Dirichlet Laplacian on a half sphere is equal to $N-1$. For $t \in \, ]-1,1[$, let $S_t$ be the spherical cap $S_t:= \partial B(0,1) \cap \{x_N >t\}$, so that $t=0$ corresponds to a half sphere. Let $\lambda_1(S_t)$ be the first Dirichlet eigenvalue in $S_t$. In particular, $t\mapsto \lambda_1(S_t)$ is monotone in $t$. Therefore, since $\eta<1$ {and $\lambda_1 (S_t) \to 0$ as $t \downarrow -1$, there  is $t^*(\eta)<0$ such that 
$$
    \lambda_1(t^*)  {\leq}  N-1 - \frac{\eta}{4}(2N -\eta).
$$
{By relying on Lemma 5 in~\cite{GeomPart} (see also Proposition 2.2 in \cite{hm3})}, we infer that, if ${\varepsilon < t^* (\eta) /2}$, then $\partial B(x_0,r) \cap \Omega$ is contained in a spherical cap homothetic to $S_{t^*}$ for every $r \leq r_0$. 
Since the eigenvalues scale of by factor $r^2$ when the domain expands of a factor $1/r$,} by the monotonicity property of the eigenvalues with respect to domains inclusion, we have 
\begin{equation}
\label{e:inf}
\inf_{r<r_0} r^{2}\lambda_1(\partial B(x_0,r) \cap \Omega) \geq \lambda_1(S_{t^*}) { \ge } N-1 - \frac{\eta}{4}(2N -\eta).
\end{equation}
 As a consequence, we can apply Lemma \ref{monot} which ensures that, if $u$ is a Dirichlet eigenfunction in $\Omega$ and $x_0\in \partial \Omega$, then~\eqref{e:f}
is a non decreasing function of $r$, provided that 
\begin{equation}
\label{e:beta}
\beta=\sqrt{(N-2)^2 +4(N-1) - \eta (2N-\eta) } -(N-2)= 2 - \eta
\end{equation}
and $C_0$ is the same as in~\eqref{constantC}.
In particular, by monotonicity we know that for every $r\leq r_0<1$,
\begin{eqnarray}
\frac{1}{r^{N-2+\beta}}\int_{\Omega_r^+}|\nabla u |^2 \;dx&\leq& \Bigg(\frac{1}{r^\beta}\int_{\Omega_r^+} \frac{|\nabla u |^2}{|x-x_0|^{N-2}} \;dx \Bigg)+ C_0 r^{2-\beta} \notag\\
&\leq&  \Bigg(\frac{1}{r_0^\beta}\int_{\Omega_{r_0}^+} \frac{|\nabla u |^2}{|x-x_0|^{N-2}} \;dx \Bigg)+ C_0 r_0^{2-\beta},
\end{eqnarray}
and we conclude that for every $r\leq r_0/2$,
$$\int_{\Omega_r^+}|\nabla u |^2 \;dx\leq K r^{N-2+\beta},$$
with 
$$K=\Bigg(\frac{1}{(r_0/2)^\beta}\int_{\Omega_{r_0/2}^+} \frac{|\nabla u |^2}{|x-x_0|^{N-2}} \;dx \Bigg)+ C_0 (r_0/2)^{2-\beta}.$$
Let us now provide an estimate on $K$.  By using equations~\eqref{e:beta}  and~\eqref{constantC} and Proposition \ref{propinfty1} we get
\begin{eqnarray}
C_0\leq \frac{2 C (N)}{\eta}\lambda \|u\|_{\infty}^2\leq C(N)  \eta^{-1} \lambda^{1+\frac{N}{2}} \|u\|_{L^2(\Omega)}^2 \label{proptoy1}.
\end{eqnarray}
To estimate the first term in $K$ we use \eqref{boundM} and the fact that $\beta=2-\eta$ to obtain
\begin{eqnarray}
\frac{1}{(r_0/2)^\beta}\int_{\Omega_{r_0/2}^+} \frac{|\nabla u |^2}{|x-x_0|^{N-2}} \;dx \leq  C(N,r_0,\eta)  \lambda(1+\lambda^{\frac{N}{2}})\|u\|_2^2. \label{proptoy2}
\end{eqnarray}
Finally, by combining~\eqref{proptoy1} and \eqref{proptoy2} we obtain \eqref{dec00} and this concludes the proof.
\end{proof}

\begin{remark} \label{grisvard} By relying on an argument similar, but easier, to the one used in the proofs of Lemma \ref{monot} and Proposition \ref{propdecayDir}, 
we get the following estimate. Let $\eta$, $r_0$, $\Omega$ and $x_0$ be as in the statement of Proposition~\ref{propdecayDir}, and assume that $u$ is an harmonic function in $\Omega$ {satisfying homogeneous Dirichlet conditions on the boundary $\partial \Omega\cap B(x_0,r)$}. For every $\eta \in ]0, 1[$ we have  
\begin{equation}
\label{e:decay2}
   \int_{B(x_0,r)\cap \Omega}|\nabla u|^2\dd x \leq
            C r^{N-\eta}  \|u\|_2^2 \; \quad \forall \; r \in ]0,r_0/2[ \, ,
\end{equation}
for a suitable positive constant $C= C(N, r_0,\eta)$.

Note that the decay estimate~\eqref{e:decay2} is sharp, in the sense that we cannot take $\eta =0$, as the following example shows. Let $N=2$ and let $\Omega$ be an angular sector with opening angle $\omega$,
$$
   \Omega: = \Big\{ (r, \theta): \; - \omega/2 < \theta < \omega /2 \Big\}. 
$$
By recalling that the Laplacian in polar coordinates is$$\Delta=\frac{\partial^2}{\partial r}+\frac{1}{r}\frac{\partial}{\partial r}+\frac{1}{r^2}\frac{\partial^2}{\partial \theta},$$
we get that the function $u(r,\theta):= r^{d} \cos(\theta \pi / \omega )$ satisfies the homogeneous Dirichlet condition on $\partial \Omega$ and is harmonic provided that $d=\pi / \omega$. Also, by computing the gradient in the circular Frenet basis $(\tau,\nu)$  and by using formulas
$$\frac{\partial u}{\partial \tau}=\frac{1}{r}\frac{\partial u}{\partial \theta}=-r^{\frac{\pi}{\omega}-1}\frac{\pi}{\omega}\sin(\theta \frac{\pi}{\omega})\quad \quad \text{ and }\quad \quad \frac{\partial u}{\partial \nu}=\frac{\partial u}{\partial r} = \frac{\pi}{\omega} r^{\frac{\pi}{\omega}-1}\cos(\theta \frac{\pi}{\omega}),$$
we get
$$\int_{B(0,r)\cap \Omega }|\nabla u|^2=\int_{B(0,r)\cap \Omega}\Big|\frac{\partial u}{\partial \tau}\Big|^2+\Big|\frac{\partial u}{\partial \nu}\Big|^2=\left(\frac{\pi}{\omega}\right)^2\int_{0}^{r}\int_{-\omega/2}^{\omega/2}s^{2(\frac{\pi}{\omega}-1)} s \;dsd\theta=\frac{\pi}{2} r^{2\frac{\pi}{\omega}},$$
which leads to the following remarks:
\begin{enumerate}
\item[\rm{(i)}] if $\Omega$ is the half-space (i.e., if $\omega=\pi$), then the Dirichlet integral decays like $r^2=r^N$. 
\item[\rm{(ii)}] If the opening angle $\omega< \pi$, then we have a good decay of the order $r^\alpha$ for some $\alpha>2$.
\item[\rm{(iii)}] The most interesting behavior occurs when the opening angle $\omega> \pi$, then the Dirichlet integral decays like $r^{\alpha}$ with $\alpha<2$. Note, moreover, that if we want that $\alpha$ gets closer and closer to $2$, we have to choose 
$\omega$ closer and closer to $\pi$ and this amounts to require that $\Omega$ is an $(\varepsilon, r_0)-$Reifenberg flat domain for a smaller and smaller value of $\varepsilon$.  
\end{enumerate}
We refer to the book of Grisvard \cite{g} for an extended discussion about elliptic problems in polygonal domains.
\end{remark}

\subsubsection{Difference between the projection of eigenfunctions }
By relying on the analyis in~\cite{lm2} we get the following result. Note that we use the convention of identifying any given $u \in H^1_0(\Omega)$ with the function defined on the whole $\R^N$ by setting $u=0$ outside $\Omega$.

\begin{prop} 
\label{estimproj} For any $\alpha \in \, ]0, 1[$ there is threshold $\varepsilon (\alpha) \in \, ]0, 1/2[$ such that the following holds. 
Let $\Omega_a$ and $\Omega_b$ be two $(\varepsilon,r_0)$-Reifenberg-flat domains in $\R^N$, both contained in the disk $D$, which has radius $R$. If 
\begin{eqnarray}
   d_H( \Omega^c_a, \Omega^c_b)\leq \delta  \leq \frac{r_0}{8 \sqrt{N}},  \label{disthaus}
\end{eqnarray}
then there is a constant $ C=  C (N, r_0, \alpha)$ such that, if $u \in H^1_0(\Omega_b)$ is a Dirichlet eigenfunction associated with the eigenvalue $\lambda$, then 
\begin{equation}
\label{e:prest}
 \|\nabla u- \nabla P^{\mathcal D}_{\Omega_a}(u) \|^2_{L^2 (D)} \leq 
     C \lambda(1+\lambda^{\frac{N}{2}}) \delta^{\alpha} L  \; \| u \|^2_{L^2(\Omega_b)},  
\end{equation}
where $L:= \Hh^{N-1}(\partial \Omega_b) $ and the projection $P^{\mathcal D}_{\Omega_a}$ is defined by~\eqref{e:pda} .
\end{prop}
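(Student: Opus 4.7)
The plan is to build, by hand, a competitor $\tilde u\in H^1_0(\Omega_a)$ with $\|\nabla u-\nabla \tilde u\|_{L^2(D)}$ controlled by the desired right-hand side. Since $P^{\mathcal D}_{\Omega_a}(u)$ minimises $\|\nabla u-\nabla v\|_{L^2(D)}$ among $v\in H^1_0(\Omega_a)$, this competitor immediately yields \eqref{e:prest}. The key observation behind the construction is that, by the hypothesis $d_H(\Omega_a^c,\Omega_b^c)\leq\delta$, every point of $\Omega_b\setminus\Omega_a$ lies within distance $\delta$ of $\partial \Omega_b$. Thus if we cover $\partial \Omega_b$ by balls of radius $r$ proportional to $\delta$, the set $\Omega_b\setminus\Omega_a$ is swallowed by this covering, and any function which is killed inside the covering automatically vanishes on $\Omega_a^c$.

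Concretely, I would fix $\eta=1-\alpha\in\,]0,1[$, choose $\varepsilon=\varepsilon(\eta)$ as in Proposition~\ref{propdecayDir}, and take $r$ of the form $r=c(N)\delta$ with $c(N)\geq 5/4$ (so that the balls $B(x_i,\tfrac{4}{5}r)$ of Lemma~\ref{cov1} cover the $\delta$-tube around $\partial \Omega_b$), while still ensuring $r\leq r_0/2$ via the bound $\delta\leq r_0/(8\sqrt{N})$. I then apply Lemma~\ref{cov1} and Lemma~\ref{thetazero} to $\partial \Omega_b$ to get a bounded covering $\{B(x_i,r)\}_{i\in I}$ with $\sharp I\leq C(N)L/r^{N-1}$, and cut-off functions $\theta_0,\ldots,\theta_{\sharp I}$ as in \eqref{e:theta}. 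I define the candidate
\begin{equation*}
\tilde u:=\theta_0 u.
\end{equation*}
Since $u=0$ on $\Omega_b^c$, $\theta_0=0$ on $\bigcup_i B(x_i,r)\supseteq \Omega_b\setminus\Omega_a$, and $0\leq\theta_0\leq 1$, the function $\tilde u$ belongs to $H^1_0(\Omega_a)$.

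Next I estimate $\nabla(u-\tilde u)=-u\nabla\theta_0+(1-\theta_0)\nabla u$, which is supported in $\bigcup_i B(x_i,2r)$. Using $|\nabla\theta_0|\leq C(N)/r$ and the finite-overlap property \eqref{e:overlap}, I obtain
\begin{equation*}
\|\nabla(u-\tilde u)\|_{L^2(D)}^2\leq C(N)\sum_{i\in I}\Bigl(\tfrac{1}{r^2}\int_{B(x_i,2r)} u^2\,dx+\int_{B(x_i,2r)}|\nabla u|^2\,dx\Bigr).
\end{equation*}
For each $i$, since $x_i\in\partial\Omega_b$, Proposition~\ref{propdecayDir} applied on $\Omega_b$ gives
\begin{equation*}
\int_{B(x_i,2r)}|\nabla u|^2\,dx\leq C(N,r_0,\eta)\,\lambda(1+\lambda^{N/2})\,r^{N-\eta}\,\|u\|_{L^2(\Omega_b)}^2.
\end{equation*}
To dispose of the $L^2$ term, I would apply a Poincar\'e inequality on $B(x_i,2r)$ to the function $u$ extended by zero: because $\Omega_b$ is Reifenberg-flat and $x_i\in\partial\Omega_b$, the complement $B(x_i,2r)\setminus\Omega_b$ has Lebesgue measure at least $c(N)r^N$, which together with $u\equiv0$ there gives $r^{-2}\int_{B(x_i,2r)}u^2\leq C(N)\int_{B(x_i,2r)}|\nabla u|^2$. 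Summing over $i$, using $\sharp I\leq C(N)L r^{1-N}$, and substituting $r\simeq\delta$ produces
\begin{equation*}
\|\nabla(u-\tilde u)\|_{L^2(D)}^2\leq C(N,r_0,\eta)\,\lambda(1+\lambda^{N/2})\,L\,\delta^{1-\eta}\,\|u\|_{L^2(\Omega_b)}^2,
\end{equation*}
which is \eqref{e:prest} with $\alpha=1-\eta$.

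The only delicate point, and the place where the restriction $\alpha<1$ is forced, is the boundary decay estimate on $\int_{B(x_i,2r)}|\nabla u|^2$: Proposition~\ref{propdecayDir} requires the Reifenberg parameter to be smaller as $\eta\downarrow 0$, hence $\varepsilon$ must be tuned to $\alpha$. The Poincar\'e step is routine once one knows the zero set of $u$ inside the ball has controlled measure, which follows from Reifenberg flatness. Everything else is bookkeeping with the covering constants and the explicit dependence on $\lambda$ coming from Propositions~\ref{propinfty1} and~\ref{propdecayDir}.
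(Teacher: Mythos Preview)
Your proposal is correct and follows essentially the same route as the paper: cover a $\delta$-neighbourhood of $\partial\Omega_b$ by balls via Lemma~\ref{cov1}, set $\tilde u=\theta_0 u$ with the cut-off from Lemma~\ref{thetazero}, split $\nabla(u-\tilde u)$ into the two obvious pieces, and feed each ball into the boundary decay estimate of Proposition~\ref{propdecayDir}. The only cosmetic difference is in the treatment of the term $r^{-2}\int_{B(x_i,2r)}u^2$: the paper invokes \cite[Proposition~12]{lm2}, which costs an enlargement of the balls by a factor $\sqrt N$ (this is where the specific threshold $\delta\le r_0/(8\sqrt N)$ comes from), whereas you use the Poincar\'e inequality for functions vanishing on a set of proportion $\ge c(N)$ inside the ball, which Reifenberg flatness guarantees; both variants are standard and lead to the same conclusion.
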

\begin{proof}  We proceed in two steps.\\
\noindent {\sc $\diamond$  Step 1.} We first fix $u \in H^1_0(\Omega_b)$ and construct $\tilde u \in H^1_0(\Omega_a)$ which is ``close" to $u$, in the sense specified in the following. 
 
{To begin with, we point out that \eqref{disthaus} implies
that 
$\left\{ B(x, 2 \delta) \right\}_{x \in \partial \Omega_b} $ is a covering 
of $\Omega_b \setminus \Omega_a$. Indeed, by contradiction assume there is $y \in \Omega_b \setminus \Omega_a$ such that 
$$
    2 \delta < d (y, \partial \Omega_b) = d(y, \Omega_b^c) \leq \sup_{y \in \Omega_a^c} d (y, \Omega_b^c). 
$$
This would contradict \eqref{disthaus} and hence the implication holds true.}  

By applying Lemma~\ref{cov1} with $r=5\delta /2$ we can find a finite set $I$, such that $\sharp I \leq C(N) L / \delta^{N-1}$ and $\{ B(x_i, 5\delta/2 \}_{i \in I}$ is a covering of $\Omega_b \setminus \Omega_a$. 

Next, we use the function $\theta_0$ given by  Lemma \ref{thetazero} (with $r=5\delta/2$)    and    we set $\tilde u(x) : = \theta_0 (x) u(x)$. We observe that, since 
$$
    \Omega_a^c \subseteq \Omega_b^c \cup \big( \Omega_b \setminus \Omega_a \big)
    \subseteq \Omega_b^c \cup \bigcup_{i \in I} B \left( x_i, 5\delta / 2 \right), 
$$
then $\tilde u \in H^1_0 (\Omega_a)$. Also, ${\nabla \tilde u = u \nabla \theta_0 + \theta_0 \nabla u}$ and hence 
\begin{equation}
\label{e:nablav}
    \| \nabla \tilde u - \nabla u \|_{L^2(D)} \leq 
    \| (1- \theta_0) \nabla u \|_{L^2 (D)} + \| u \nabla \theta_0 \|_{L^2(D)} 
 \end{equation}
Next, by recalling that $\theta_0 \equiv 1$ outside the union of the balls $\{ B(x_i, 5 \delta ) \}_{i \in I}$, we get 
\begin{equation}
\label{e:nablav3} 
          \| (1- \theta_0) \nabla u \|^2_{L^2 (D)} =
          \int_{\bigcup_{i \in I} B(x_i, 5 \delta)} (1 - \theta_0)^2 |\nabla u|^2 dx \leq 
          \sum_{i \in I}   \int_{ B(x_i, 5 \delta)}  |\nabla u|^2  dx. 
\end{equation}
Also, by recalling that $|\nabla \theta_0| (x) \leq C(N)/\delta$, we obtain 
\begin{equation}
\label{e:nablav2}
\begin{split}
         \| u \nabla \theta_0 \|^2_{L^2(D)} & =
         \int_{\bigcup_i B(x_i, 5 \delta)} |\nabla \theta_0|^2  u^2dx 
          \leq 
         \frac{C(N)}{\delta^2} \sum_i \int_{B(x_i, 5 \delta) }u^2dx \\ & \leq 
         \frac{C(N)}{\delta^2} \sum_i  (5 \delta)^2 \int_{B(x_i, 5\sqrt{N} \delta) }
         |\nabla u|^2 dx. \\
     \end{split}
\end{equation}
To get the last inequality we have used~\cite[Proposition 12]{lm2} and the fact that one can take $b(N) = \sqrt{N}$ in there. \\
\noindent {\sc $\diamond$  Step 2.} We now restrict to the case when $u$ is an eigenfunction for the Dirichlet Laplacian, and $\lambda$ is the associated eigenvalue. By using Proposition~\ref{propdecayDir} and Lemma~\ref{cov1} we get   
\begin{equation}
\label{e:ref}
    \sum_i \int_{B(x_i, 4 \sqrt{N} \delta) }
         |\nabla u|^2 dx \leq  \sum_i C \lambda(1+\lambda^{\frac{N}{2}})\delta^{N-\eta} \|u\|_2^2 \leq  C\lambda(1+\lambda^{\frac{N}{2}}) L \delta^{1-\eta} \|u\|_2^2, 
\end{equation}
with $C=C(N,  r_0,\eta)$ and provided that $\delta \leq r_0 / 8 \sqrt{N}$. 

By choosing $\eta: = (1- \alpha)$, inserting~\eqref{e:ref} into~\eqref{e:nablav3} and\eqref{e:nablav2} and recalling~\eqref{e:nablav} we finally get 
$$
    \|\nabla u-\nabla P^{\mathcal D}_{\Omega_a}(u) \|^2_{L^2 (D)}  \leq 
    \|\nabla u - \nabla \tilde u \|_{L^2(D)} \leq 
    C(N, r_0,\alpha) \lambda(1+\lambda^{\frac{N}{2}})L \delta^{\alpha} \|u\|_2^2 .
 $$ 
\end{proof}
\subsection{Conclusion of the proof of Theorem~\ref{main}}
\begin{proof}[Proof of Theorem \ref{main}] 
First,  we recall that $\max \{ \lambda_n^a, \lambda_n^b \} \leq \gamma_n$.  Next, we fix $n \in \mathbb N$ and we denote by $u^b_1, \dots, u^b_n$ the first $n$ eigenfunctions of the Dirichlet Laplacian in $H^1_0(\Omega_b)$. {Given $u = \sum_{k=1}^n c_k u_b^k$, by applying Proposition~ \ref{estimproj}  we get 
\begin{equation}
\label{e:space}
\begin{split}
       \| \nabla u - \nabla P^{\mathcal D}_{\Omega_a} u  \|^2_{L^2(D)} & = 
\left\| \sum_{k=1}^n c_k \Big( \nabla u_b^k - \nabla  P^{\mathcal D}_{\Omega_a} u^k_b  
\Big) \right\|^2_{L^2(D)} \leq n \sum_{k=1}^n c^2_k \left\| \nabla u_b^k - \nabla  P^{\mathcal D}_{\Omega_a} u^k_b  \right\|^2_{L^2(D)}   \\
      & \leq  n \sum_{k=1}^n c^2_k 
      {C\gamma_n(1+\gamma_n^{\frac{N}{2}})L\delta^{\alpha}} \| u^k_b  \|^2_{L^2(D)}   =  n {C\gamma_n(1+\gamma_n^{\frac{N}{2}})L\delta^{\alpha}} \| u \|^2_{L^2 (D)}.  
      \\
\end{split}
\end{equation}}
To get the last equality we have used the fact that the eigenfunctions associated with different eigenvalues are orthogonal with respect to the standard scalar product in $L^2$. 

We use~\eqref{e:space} and the Sobolev-Poincar\'e inequality in the ball $D$, which has radius $R$, and we conclude that the assumptions of Lemma \ref{l:ab} are verified provided  that: \\
${\rm (i)} \; A=nC\gamma_n(1+\gamma_n^{\frac{N}{2}})  L \delta^\alpha$, $B=C(N, R) A$ and \\$\rm{(ii)}$ $B<1$.\\
 Hence, we fix a threshold $\delta_0 (N, n, \gamma_n, r_0, \alpha, R, L)$ satisfying $n C\gamma_n(1+\gamma_n^{\frac{N}{2}})  L \delta_0^\alpha <1/4$, and we get that for any $\delta \leq \delta_0$ one has $1/(1-\sqrt{B})\leq 2$. By applying Lemma \ref{l:ab} we then  get
$$
    \lambda_n^a-\lambda_n^b \leq \frac{A}{(1-\sqrt{B})}\leq 2A 
     \leq  2 n C\gamma_n(1+\gamma_n^{\frac{N}{2}})   L      \delta^\alpha. 
$$
The theorem follows by exchanging the roles of $\Omega_a$ and $\Omega_b$.
\end{proof}
\subsection{The Lipschitz case}
\label{Lip}
We end this section with the proof of Theorem \ref{mainLip}.

\begin{proof}[Proof of Theorem \ref{mainLip}] Let $\Omega_a$ and $\Omega_b$ be as in the statement of Theorem~\ref{mainLip}. We fix $n \in \mathbb N$ and $k \leq n$ and we denote by $u^k_a \in H^1_0(\Omega_a)$ and $u^k_b \in H^1_0(\Omega_b)$ the eigenfunctions  associated with $\lambda_a^k$ and $\lambda_b^k$, respectively. In particular, $u^k_b$ solves $-\Delta u=\lambda_b^k u^k_b$ in $\Omega_b$. Let $\bar u_b \in H^1_0(\Omega_a)$ be the distributional solution of 
\begin{equation*}
\left\{
\begin{array}{ll}
   -\Delta u = \lambda^k_b u_b & \text{in} \; \; \Omega_a \\
   u =0 & \text{on} \; \partial \Omega_a.
\end{array}
\right.
\end{equation*}
We can now apply Theorem 1  in the paper by Savar\'e and Schimperna~\cite{SavareSchimperna}. Then formula (3.4) in \cite{SavareSchimperna} yields
$$
   \| \nabla u^k_b - \nabla  P^{\mathcal D}_{\Omega_a} u^k_b \|^2_{L^2(D)} \leq 
   \| \nabla u^k_b - \nabla \bar u_b  \|_{L^2(D)} \leq 
   C( \rho, N, {R}) \| \lambda_b^k u^k_b \|_{L^2(D)} \| \lambda_b^k u^k_b \|_{H^{-1} (D)} 
   \frac{d_H (\Omega_a^c, \Omega_b^c)}{\rho \sin \theta} ,
$$
where the projection $P^{\mathcal D}_{\Omega_a} $ is the same as in~\eqref{e:pda}. { Then, by using the definition of eigenfunction, we get 
\begin{equation*}
\begin{split}
   \|\lambda_b^k u^k_b \|_{H^{-1} (D)} & \leq  C(N, R) \sqrt{\lambda_b^k }\| u^k_b\|_{L^2(D)}.     \\
   \end{split}
\end{equation*}
}
We recall that $\lambda^k_b \leq \gamma_n$ and that $S^n_b$ is the eigenspace of $H^1_0(\Omega_b)$ generated by the first $n$ eigenfunctions and by arguing as~\eqref{e:space}, we get that, for any $u \in S^n_b$, 
$$
      \| \nabla u - \nabla  P^{\mathcal D}_{\Omega_a} u \|^2_{L^2(D)} \leq 
      C(\theta, \rho, \gamma_n, N, {R}, n) d_H (\Omega_a^c, \Omega_b^c) \| u \|^2_{L^2(D)}. 
$$
Hence, by proceeding as in the proof of Theorem~\ref{main} we can conclude.  
 \end{proof}
\section{Stability estimates for Neumann eigenvalues}
\label{s:proofmain2}
\subsection{The Neumann eigenvalue problem in Reifenberg-flat domains}
\label{ss:nep}
Given an open and bounded set $\Omega$, $(u, \mu) \in H^1 (\Omega) \setminus \{ 0\} \times \mathbb R$ is an eigencouple for the Neumann Laplacian in $\Omega$ if
\begin{equation}
\label{e:dpw2}
          \int_{\Omega} \nabla u (x) \cdot \nabla v (x) dx = \mu \int_{\Omega}
         u (x) \, v (x) dx \qquad \forall \,  v \in H^1 (\Omega).
\end{equation}
In this section, we apply the abstract framework introduced in Section~\ref{s:estpro} to study how the eigenvalues $\mu$ satisfying~\eqref{e:dpw2} depend on the domain $\Omega$. We set 
$$
    H = L^2 (\R^N,\R) \times L^2 (\R^N,\R^N) 
$$
and we equip it with the scalar product
\begin{equation}
\label{e:accan}
    \h \Big( (u_1,  v_1), (u_2,  v_2) \Big) = 
    \int_{\R^N} u_1 (x) \, u_2 (x) dx +   \int_{\R^N} v_1 (x) \cdot  v_2 (x) dx.
\end{equation}
Also, we set 
\begin{equation}
\label{e:hn}
    h \Big( (u_1,  v_1), (u_2,  v_2) \Big) = 
    \int_{\R^N} u_1 (x) \, u_2 (x) dx. 
\end{equation}
Note that $h$ is a symmetric, positive bilinear form (i.e., it satisfies properties~\eqref{e:c1}), although it is not a scalar product on $H$.  Inequality~\eqref{e:poinab} is trivially satisfied. 

As before, $\Omega_a$ and $\Omega_b$ are two Reifenberg-flat domains contained in $\mathbb R^N$ and we denote the Sobolev spaces by $H^1 (\Omega_a)$ and $H^1 (\Omega_b)$. The spaces $V_a$ and $V_b$ are defined by considering the map
\begin{equation*}
  \begin{split}
        j_{\Omega}:  H^1 ( \Omega & )  \to    L^2 (\R^N) \times L^2 (\R^N, \R^N)  \\
        & u \mapsto (u {\bf 1}_\Omega, \nabla u {\bf 1}_\Omega), \\
       \end{split}      
\end{equation*}       
where ${\bf 1}_\Omega$ denotes the characteristic function of $\Omega$. Note that the ranges $V_a = j_{\Omega_a}(H^1 (\Omega_a))$ and $V_b = j_{\Omega_b}(H^1 (\Omega_b))$ are closed and that~\eqref{assumption3} is satisfied. Note also that  the inclusion~\eqref{e:rellichab} {is compact} because, in virtue of Proposition~\ref{embedding}, we can apply Rellich's Theorem. 

The Neumann problem~\eqref{e:dpw2} reduces to~\eqref{e:eigpb} provided that $\lambda= \mu + 1$ and hence it is solved by a sequence of eigencouples $(\mu_n, u_n)$ with $\lim_{n \to + \infty}\mu_n = + \infty$ . By relying on Lemma~\ref{l:ab}, we deduce that to control the difference $|\mu^a_n - \mu^n_b|$ it is sufficient to provide an estimate on the projection operator defined by~\eqref{e:proj}, which can be identified by a map $P^{\mathcal{N}}_{\Omega_a}: H \to H^1 (\Omega_a)$ satisfying the following property:  $\forall \; u \in H^1 (\Omega_b),$
\begin{eqnarray}
    \| j_{\Omega_b}(u) -P^{\mathcal{N}}_{\Omega_a} j_{\Omega_b}(u) \|_{\mathcal{H}}^2   &=& \big\|  {\bf 1}_{\Omega_b} u - {\bf 1}_{\Omega_a} P^{\mathcal{N}}_{\Omega_a} u  \big\|_{L^2 (\R^N)}^2  + 
            \big\| {\bf 1}_{\Omega_b} \nabla u-  {\bf 1}_{\Omega_a} \nabla  [ P^N_{\Omega_a} u ] \big\|_{L^2 (\R^N)}^2    \notag \\
&=&        \min_{v \in H^1 (\Omega_a)} \Big\{ \big\| {\bf 1}_{\Omega_b} u - {\bf 1}_{\Omega_a} v  \big\|_{L^2 (\R^N)}^2  +  \big\| {\bf 1}_{\Omega_b} \nabla u-  {\bf 1}_{\Omega_a} \nabla v \big\|_{L^2 (\R^N)}^2  \Big\}. \label{e:projneu}
\end{eqnarray}
\subsection{Estimates on the projection of Neumann eigenfunctions}
To provide an estimate on~\eqref{e:projneu} we first establish a preliminary result concerning the decay of the gradient of a
Neumann eigenfunction. 
\begin{prop}
\label{propdecayneum} 
For every $\eta>0$ there is a positive constant $\varepsilon:=\varepsilon(\eta)$ such that, for every  {connected}, $(\varepsilon, r_0)$-Reifenberg-flat domain $\Omega \subseteq \R^N$, the following holds. Let $u$ be a Neumann eigenfunction in $\Omega$ associated with the eigenvalue $\mu$, let $x\in \partial \Omega$ and let $r\leq 
\min \{ r_0,1 \}$. Then there is a constant $C = C (N, r_0, \eta, \diam(\Omega))$ such that 
\begin{eqnarray}
\label{e:bdecay}
           \int_{B(x,r)\cap \Omega}|\nabla u|^2 \;dx \leq
            C \mu  (1+\sqrt{\mu})^{2\gamma(N)} \|u\|^2_{L^2(\Omega)} \left(\frac{r}{\min \{ r_0,1 \} }\right)^{N-\eta}, \label{dec}
\end{eqnarray}
where $\gamma (N) = \max \Big\{ \frac{N}{2}, \frac{2}{N-1} \Big\}$ as in the statement of Proposition~\ref{uestim}. 
\end{prop}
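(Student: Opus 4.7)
The plan is to run a Campanato-type iteration in which on each ball $B(x,r)\cap\Omega$ I compare the eigenfunction $u$ with the harmonic competitor provided by the mixed boundary-value problem \eqref{probbb}, and control the discrepancy by the $L^\infty$-bound on $u$. Concretely, fix $x\in\partial\Omega$ and $r\leq r_1:=\min\{r_0,1\}$, let $v$ be the minimizer of \eqref{minimi}, and set $w:=u-v\in H^1(B(x,r)\cap\Omega)$. Then $w$ vanishes in trace on $\partial B(x,r)\cap\Omega$, satisfies a homogeneous Neumann condition on $\partial\Omega\cap B(x,r)$, and solves $-\Delta w=\mu u$ weakly. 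Theorem \ref{thdecay} will provide the desired decay $a^{N-\beta}$ for the harmonic part $v$, while $w$ will produce an error of higher order in $r$ that is subdominant because $r\leq 1$.

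Testing the equation for $w$ against $w$, then using Cauchy--Schwarz together with a Poincar\'e inequality of the form $\|w\|_{L^2(B(x,r)\cap\Omega)}\leq Cr\|\nabla w\|_{L^2(B(x,r)\cap\Omega)}$ (valid on $(\varepsilon,r_0)$-Reifenberg-flat domains for $\varepsilon$ small, thanks to the zero trace on the sizeable spherical portion of the boundary), I would obtain $\|\nabla w\|_{L^2(B(x,r)\cap\Omega)}\leq C\mu r\|u\|_{L^2(B(x,r)\cap\Omega)}$. Combining this with $\|u\|_{L^2(B(x,r)\cap\Omega)}\leq Cr^{N/2}\|u\|_{L^\infty(\Omega)}$ and the $L^\infty$ bound \eqref{e:linfty} from Proposition \ref{uestim} gives
\[
 \int_{B(x,r)\cap\Omega}|\nabla w|^2\,dx\leq C\mu^2(1+\sqrt{\mu})^{2\gamma(N)}\,r^{N+2}\,\|u\|_{L^2(\Omega)}^2.
\]
On the other hand, setting $\phi(\rho):=\int_{B(x,\rho)\cap\Omega}|\nabla u|^2\,dx$, the minimality of $v$ yields $\int_{B(x,r)\cap\Omega}|\nabla v|^2\leq\phi(r)$, so that Theorem \ref{thdecay} applied to $v$ and the pointwise inequality $|\nabla u|^2\leq 2|\nabla v|^2+2|\nabla w|^2$ together produce
\[
 \phi(ar)\leq 2a^{N-\beta}\phi(r)+C\mu^2(1+\sqrt{\mu})^{2\gamma(N)}\,r^{N+2}\,\|u\|_{L^2(\Omega)}^2,\qquad r\leq r_1.
\]

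The final step is to choose $\beta=\eta/2$ and then $a\in]0,1/2[$ small enough that $2a^{N-\beta}\leq a^{N-\eta}$, and fix $\varepsilon=\varepsilon(\beta,a)$ as in Theorem \ref{thdecay}. The classical Campanato iteration lemma, applied to the nondecreasing function $\phi$ (the inhomogeneous $r^{N+2}$-term being subdominant since $r\leq 1$ and $N+2>N-\eta$), then gives
\[
 \phi(r)\leq C(r/r_1)^{N-\eta}\bigl[\phi(r_1)+\mu^2(1+\sqrt{\mu})^{2\gamma(N)}r_1^{N+2}\|u\|_{L^2(\Omega)}^2\bigr].
\]
Using $\phi(r_1)\leq\|\nabla u\|_{L^2(\Omega)}^2=\mu\|u\|_{L^2(\Omega)}^2$ and $r_1\leq 1$, the announced estimate \eqref{dec} follows (after absorbing the extra factor of $\mu$ into the power of $1+\sqrt{\mu}$). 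I expect the main obstacle to be the justification of the Poincar\'e-type inequality for $w$ on the deformed half-ball $B(x,r)\cap\Omega$ with mixed zero-Dirichlet/free-Neumann trace: this is exactly where Reifenberg-flatness with a small regularity parameter plays a decisive role and where one likely has to invoke extension or embedding results from the companion paper \cite{GeomPart}.
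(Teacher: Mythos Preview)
Your overall architecture---compare $u$ to the harmonic replacement $v$ from \eqref{minimi}, invoke Theorem~\ref{thdecay} for the decay of $v$, and iterate at scales $a^k r_1$---is exactly the paper's strategy. The paper carries out the iteration by hand (induction on $k$) rather than quoting a Campanato lemma, but that is cosmetic.

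The one substantive difference is in the control of $\displaystyle\int_{B(x,r)\cap\Omega}|\nabla(u-v)|^2$. You test $-\Delta w=\mu u$ against $w$ and then need the Poincar\'e inequality $\|w\|_{L^2}\leq Cr\|\nabla w\|_{L^2}$ on the rough half-ball, which you correctly flag as the delicate point. The paper bypasses this entirely: since $v$ is the Dirichlet minimizer, $\nabla v\perp\nabla(u-v)$ in $L^2$, so $\int|\nabla(u-v)|^2=\int|\nabla u|^2-\int|\nabla v|^2$; and since $u$ minimizes $w\mapsto\int|\nabla w|^2-2\mu\int uw$ with its own boundary data on $\partial B(x,r)\cap\Omega$, taking $v$ as competitor gives $\int|\nabla u|^2-\int|\nabla v|^2\leq 2\mu\int u(u-v)$. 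Now the maximum principle yields $\|v\|_{L^\infty}\leq\|u\|_{L^\infty}$, hence $\int u(u-v)\leq 2\omega_N r^N\|u\|_\infty^2$, and Proposition~\ref{uestim} closes the estimate. No Poincar\'e on $B(x,r)\cap\Omega$ is needed.

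Note also that your route, as written, produces an error term $C\mu^2(1+\sqrt\mu)^{2\gamma(N)}r^{N+2}\|u\|_2^2$ and hence the final bound carries the exponent $2\gamma(N)+2$ rather than the $2\gamma(N)$ stated in the proposition (you acknowledge this absorption yourself). The paper's $L^\infty$ argument gives the error $C\mu(1+\sqrt\mu)^{2\gamma(N)}r^{N}\|u\|_2^2$, which feeds directly into the induction and delivers the exact exponent. If you want to recover this within your own framework, simply replace the Cauchy--Schwarz/Poincar\'e step by $\int uw=\int u(u-v)\leq 2|B(x,r)|\,\|u\|_\infty^2$ using the maximum principle for $v$; that is precisely the paper's move.
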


\begin{proof} 
For a given $\eta>0$  we choose $\beta$ in such a way that $0<\beta<\eta$ and that
\begin{eqnarray}
\label{defina}
a := 8^{\frac{1}{\beta-\eta}}<\frac{1}{2} . 
\end{eqnarray}
Also, we choose $\varepsilon $ smaller or equal to the constant given by Theorem~\ref{thdecay} with this choice of $a$ and  $\beta$. Note that $\varepsilon$ only depends on $\eta$.

We now consider a Neumann eigencouple $(u, \mu)$, while the point $x\in\partial \Omega$ is fixed. We may assume without losing  generality that $r_0\leq 1$, up to redefine $r_0$ by $\min \{1,r_0 \}$.

We first use  the induction principle in order to show that, 
for a suitable constant $C_4 = C_4 (N, r_0, \diam(\Omega))$ that will be chosen later, and for any $k \in \N$,
\begin{equation}
\label{Inductivedecay}
                     \int_{B(x,a^{k}r_0)\cap \Omega}|\nabla u|^2dx\leq C_4\mu  (1+\sqrt{\mu})^{2\gamma(N)}  a^{k(N-\eta)}  \| u \|^2_{L^2(\Omega)}. 
\end{equation}
If $k=0$ the inequality~\eqref{Inductivedecay} is satisfied provided that $C_4\geq 1$ because $\int_{\Omega}|\nabla u|^2dx = \mu \|u\|_{2}^2$. Next, we consider the inductive step and we assume that \eqref{Inductivedecay} holds for a given $k\geq 0$. We term $v$ the solution of Problem~\eqref{probbb} in $\Omega_k:= B(x,a^kr_0)\cap \Omega$. Then Theorem \ref{thdecay} gives  
\begin{eqnarray}
\int_{\Omega_{k+1}}|\nabla v |^2 dx \leq  a^{N-\beta}
\int_{\Omega_k}|\nabla v|^2dx .\label{decay5}
\end{eqnarray}
Note that since $a<1$, then $\Omega_{k+1} \subseteq \Omega_k$. We now compare $\nabla u$ and $\nabla v$ in $B(x,a^{k+1}r_0)$ by using the inequality $\|a+b\|^2\leq 2\|a\|^2 +2 \|b\|^2$ : 
\begin{eqnarray}
\int_{\Omega_{k+1}}|\nabla u|^2 dx  & \leq &
 2 \int_{\Omega_{k+1}}|\nabla v|^2 dx  + 2 \int_{\Omega_{k+1}}|\nabla (u-v)|^2  dx \notag \\
&\leq &  2 a^{N-\beta}
\int_{\Omega_{k}}|\nabla v|^2dx + 2 \int_{\Omega_k}|\nabla (u-v)|^2 dx  \notag \\
&\leq &  2 a^{N-\beta}
\int_{\Omega_k}|\nabla u|^2dx +  2 \int_{\Omega_k}|\nabla (u-v)|^2 dx .\label{etape1}
\end{eqnarray}
Since $v$ is harmonic and $u$ is a competitor, then $\nabla (u-v)$ is orthogonal to $\nabla v$ in $L^2(\Omega_k)$ and hence
\begin{equation}
\label{popol}
       \int_{\Omega_k}|\nabla (u-v)|^2 dx = \int_{\Omega_k}|\nabla u|^2 dx - \int_{\Omega_k}|\nabla v|^2 dx.
\end{equation}
Moreover,  $u$ minimizes the functional $\displaystyle{w\mapsto \int_{\Omega_k} | \nabla w |^2 dx - 2 \mu \int_{\Omega_k}w u \, dx }$ with its own Dirichlet conditions on $\partial B(x,a^kr_0)\cap \Omega$, and hence by taking $v$ as a competitor we obtain
\begin{eqnarray}
\int_{\Omega_k}|\nabla u|^2 dx -\int_{\Omega_k}|\nabla v|^2  dx &\leq& 2\mu \int_{\Omega_k} \big[ u^2 -vu \big] dx \notag \\
&\leq& 4\mu \omega_Na^{Nk} \|u\|_{\infty}^2 . \label{norminf}
\end{eqnarray}
To get the above inequality we have used that $r_0\leq 1$ and the estimate
\begin{equation}
\label{e:esti}
     \| v \|_{L^{\infty} (\Omega)} \leq \| u \|_{L^{\infty} (\Omega)},
\end{equation}
which can be established arguing by contradiction. Indeed, set $M : = \| u \|_{L^{\infty} (\Omega)}$. If~\eqref{e:esti} is violated, the truncated function 
$w: = \min \{ M, \, \max \{ v, -M \} \}$ would be a competitor of $v$ satisfying $\| \nabla w \|^2_{L^2 (\Omega)} < \| \nabla v \|^2_{L^{2} (\Omega)}$, which contradicts the definition of $v$.  

By plugging~\eqref{popol} and~\eqref{norminf} in \eqref{etape1} and using the inductive hypothesis \eqref{Inductivedecay}, we get the estimate
$$\int_{\Omega_{k+1}}|\nabla u|^2 \leq 2 a^{N-\beta}
C_4\mu  (1+\sqrt{\mu})^{2\gamma(N)} a^{k(N-\eta)}\|u\|_2^2+ 8 \mu \omega_Na^{Nk} \|u\|_{L^\infty(\Omega)}^2,$$
where $\omega_N$ is the measure of the unit ball in $\R^N$. By using \eqref{desirein}, the above expression reduces to 
\begin{eqnarray}
\int_{\Omega_{k+1}}|\nabla u|^2 \leq\mu   (1+\sqrt{\mu})^{2\gamma(N)} a^{(k+1)(N-\eta)} \|u\|_2^2\Big[2C_4 a^{\eta-\beta}+C_5a^{\eta(k+1)-N}\Big] , \label{inde}
\end{eqnarray}
for some constant $C_5= C_5 (N, r_0, \diam(\Omega))$. We claim that by choosing in~\eqref{Inductivedecay} 
\begin{eqnarray}
C_4= 8 C_5 a^{\eta-N}\label{totol}
\end{eqnarray}
then the right hand side of~\eqref{inde} is less than $C_4 \mu  (1+\sqrt{\mu})^{2\gamma(N)} a^{(k+1)(N-\eta)}$, which proves \eqref{Inductivedecay}. Indeed, our choice of $a$ implies $a^{\eta-\beta}=\frac{1}{8}$ and hence~\eqref{totol} implies 
\begin{eqnarray}
\ \ \ \ \ \ 2 C_4a^{\eta-\beta}+ C_5 a^{(k+1) \eta - N }= C_4\frac{2}{8} +  (8C_5 a^{\eta-N})\frac{a^{k\eta}}{8} \leq C_4\frac{2}{8} +  (8C_5 a^{\eta-N})\frac{1}{8} = \frac{3}{8}C_4, \label{garen}
\end{eqnarray}
which concludes the proof of~\eqref{Inductivedecay}. 

To conclude the proof of the Proposition we observe that, given $r\leq r_0$, we can select an integer $k\geq 0$ such that 
$r <  a^{k}r_0 \leq r a^{-1},$
which yields
\begin{equation*}
\begin{split}
     \int_{B(x,r)  \cap \Omega}|\nabla u|^2 \dx \leq  &
     \int_{B(x,a^{k}r_0) \cap  \Omega} |\nabla u|^2 \dx\leq 
    C_4 \mu  (1+\sqrt{\mu})^{2\gamma(N)}  a^{k(N-\eta)} \| u\|^2_{L^2 (\Omega) }  \\ & 
    \leq C_4 \mu  (1+\sqrt{\mu})^{2\gamma(N)}  \| u\|^2_{L^2 (\Omega) } 
   \left( \frac{r}{ar_0 }\right)^{N-\eta} \\
\end{split}
\end{equation*}
and this implies~\eqref{dec} provided that $C:=C_4 / a^{N - \eta}$.
\end{proof}
By combining a covering argument from~\cite{lm} with the previous proposition we establish the projection estimate provided by the following result.
\begin{prop}\label{estimproj2} 
For any $\alpha \in \, ]0, 1[ $ there is a constant $\varepsilon = \varepsilon (\alpha)\leq 1/600$ such that the following holds. 
Let $\Omega_a$ and $\Omega_b$ be two {connected} $(\delta,r_0)$-Reifenberg-flat domains of $\R^N$ satisfying
\begin{equation}
\label{e:boundsdh}
  \max \{ d_H( \Omega_a,  \Omega_b);  d_H( \Omega^c_a,\Omega^c_b) \} \leq \delta, 
\end{equation}
where $0< \delta \leq \min \{r_0/ 5, 1\}$. Then there is a constant $C=C(N,r_0, \diam(\Omega_b), \alpha)$ such that, if $u \in H^1(\Omega_b)$ is a Neumann eigenvector associated with the eigenvalue $\mu$, then there is $\tilde u \in H^1 (\Omega_a)$ satisfying 
$$
 \| {\bold 1}_{\Omega_a} \tilde u  - {\bold 1}_{\Omega_b}u  \|_{L^2(\R^N)}^2
 + \| {\bold 1}_{\Omega_a} \nabla \tilde u  - {\bold 1}_{\Omega_b} \nabla u  \|_{L^2(\R^N)}^2
  \leq  C  (1+\sqrt{\mu})^{2 \gamma(N)+1}   L \delta^\alpha  \| u\|^2_{L^2(\Omega_b)},
$$
where $L:= \Hh^{N-1}(\partial \Omega_b)$ and $\gamma (N) = \max {\displaystyle \left\{ \frac{N}{2}, \frac{2}{N-1}\right\}}$ as in the statement of Proposition~\ref{uestim}.
\end{prop}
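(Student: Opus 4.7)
The plan is to adapt the Dirichlet construction of Proposition~\ref{estimproj} to the Neumann setting. Two new difficulties arise: (i) the competitor $\tilde u$ must itself be defined on $\Omega_a\setminus\Omega_b$, where $u$ does not live, and (ii) because $u$ does not vanish at the boundary, simply multiplying it by the cutoff $\theta_0$ from Lemma~\ref{thetazero} would produce a term $u\nabla\theta_0$ of order $\|u\|_{L^\infty}/r$, which is not small enough. Both issues are resolved by replacing $u$ near $\partial\Omega_b$ with a piecewise-constant function built from local averages of $u$.

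First I would fix $r:=5\delta/2$ and apply Lemma~\ref{cov1} to $\Omega_b$ to obtain a cover $\{B(x_i,r)\}_{i\in I}$ of the $(4r/5)$-neighborhood of $\partial\Omega_b$, with $\sharp I\leq C(N)L/\delta^{N-1}$. The bounds $d_H(\Omega_a,\Omega_b),\,d_H(\Omega_a^c,\Omega_b^c)\leq\delta$ then force $\Omega_a\triangle\Omega_b\subseteq\bigcup_{i} B(x_i,r)$. Let $\theta_0,\theta_1,\dots,\theta_{\sharp I}$ be the partition of unity from Lemma~\ref{thetazero}, and for each $i\geq 1$ let $c_i$ denote the average of $u$ on $B(x_i,2r)\cap\Omega_b$, which for $\varepsilon$ small enough is connected and supports a Poincar\'e inequality with constant proportional to $r$ (by Reifenberg flatness and Proposition~\ref{embedding}). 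I then define
\begin{equation*}
\tilde u(x):=\theta_0(x)\,u(x)+\sum_{i=1}^{\sharp I}\theta_i(x)\,c_i,\qquad x\in\Omega_a,
\end{equation*}
with the convention that $\theta_0(x)u(x):=0$ whenever $x\notin\Omega_b$; this is unambiguous because $\theta_0\equiv 0$ on $\bigcup_i B(x_i,r)\supseteq\Omega_a\setminus\Omega_b$. Hence $\tilde u\in H^1(\Omega_a)$.

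Second, exploiting $\sum_{i=0}^{\sharp I}\nabla\theta_i=0$, the gradient rewrites as
\begin{equation*}
\nabla\tilde u-\nabla u=-\sum_{i=1}^{\sharp I}\theta_i\,\nabla u+\sum_{i=1}^{\sharp I}(c_i-u)\nabla\theta_i,
\end{equation*}
and Poincar\'e's inequality on each ball $B(x_i,2r)\cap\Omega_b$ combined with the bounded-overlap property~\eqref{e:overlap} and the estimate $|\nabla\theta_i|\leq C/r$ yields
\begin{equation*}
\|\tilde u-u\|^2_{L^2(\Omega_a\cap\Omega_b)}+\|\nabla\tilde u-\nabla u\|^2_{L^2(\Omega_a\cap\Omega_b)}\leq C\int_{W}|\nabla u|^2\,\dx,\qquad W:=\bigcup_i B(x_i,2r).
\end{equation*}
The contribution from $\Omega_b\setminus\Omega_a\subseteq W$ is treated identically. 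On $\Omega_a\setminus\Omega_b$ only the constants $c_i$ contribute; using $|c_i|\leq\|u\|_{L^\infty(\Omega_b)}$ together with the measure bound $|\Omega_a\setminus\Omega_b|\leq C L\delta$ coming from Lemma~\ref{cov1} and the $L^\infty$ estimate from Proposition~\ref{uestim}, this piece is controlled by $C(1+\sqrt\mu)^{2\gamma(N)}L\delta\,\|u\|_2^2$, which is of strictly lower order than the target.

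Finally I would apply Proposition~\ref{propdecayneum} with $\eta:=1-\alpha$ and the correspondingly small $\varepsilon$: for every $i\in I$,
\begin{equation*}
\int_{B(x_i,2r)\cap\Omega_b}|\nabla u|^2\,\dx\leq C\,\mu(1+\sqrt\mu)^{2\gamma(N)}\,\|u\|_{L^2(\Omega_b)}^2\,r^{N-\eta},
\end{equation*}
and summing over $i\in I$ with $\sharp I\leq C L/r^{N-1}$ yields $\int_W|\nabla u|^2\,\dx\leq C\mu(1+\sqrt\mu)^{2\gamma(N)}L\delta^{\alpha}\|u\|_2^2$. Using $\mu\leq(1+\sqrt\mu)^2$, the factor $\mu$ is absorbed into the prefactor with the correct power of $(1+\sqrt\mu)$. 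The main obstacle I anticipate is the careful handling of the region $\Omega_a\setminus\Omega_b$, where neither $u$ nor its gradient is directly available and no analogue of the decay estimate applies: this is precisely why both Hausdorff distances must be controlled in the hypothesis (in contrast with the Dirichlet case), and why the $L^\infty$ control of Neumann eigenfunctions in Proposition~\ref{uestim}, itself resting on the Reifenberg-flat extension theory of~\cite{GeomPart}, is indispensable.
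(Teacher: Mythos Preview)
Your overall architecture matches the paper's: cover the strip by balls centered on $\partial\Omega_b$, build a partition of unity $\theta_0,\dots,\theta_{\sharp I}$, and replace $u$ near the boundary by a weighted combination of local averages. The decay estimate of Proposition~\ref{propdecayneum} and the $L^\infty$ bound of Proposition~\ref{uestim} are used exactly as you indicate.

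There is, however, a genuine gap in your treatment of $\Omega_a\setminus\Omega_b$. On that set $\tilde u=\sum_i c_i\theta_i$, so $\nabla\tilde u=\sum_i c_i\nabla\theta_i$. Your argument ``$|c_i|\leq\|u\|_{L^\infty}$ together with $|\Omega_a\setminus\Omega_b|\leq CL\delta$'' correctly bounds $\|\tilde u\|_{L^2(\Omega_a\setminus\Omega_b)}^2$, but it does \emph{not} control the gradient: the naive estimate gives
\[
\int_{\Omega_a\setminus\Omega_b}|\nabla\tilde u|^2
\;\leq\; \frac{C(N)}{\delta^2}\,\|u\|_\infty^2\,|\Omega_a\setminus\Omega_b|
\;\leq\; C\,L\,\|u\|_\infty^2\,\delta^{-1},
\]
which diverges as $\delta\to 0$. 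The same obstruction actually already arises on $\Omega_a\cap\Omega_b$ in the term $\sum_i(c_i-u)\nabla\theta_i$: at a point $x\in 2B_{i_0}$ several $\theta_j$ with $j\neq i_0$ are active, and Poincar\'e on $B(x_{i_0},2r)\cap\Omega_b$ alone does not bound $\int_{2B_{i_0}\cap\Omega_b}|u-c_j|^2$ for those $j$.

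The missing idea is the ``nearby averages are close'' estimate. Since $\theta_0\equiv 0$ on $\bigcup_i B_i$, one has $\sum_j\nabla\theta_j=0$ there, so for any reference constant $\widehat m_{i_0}$,
\[
\nabla\tilde u(x)=\sum_{j\in J_{i_0}}(c_j-\widehat m_{i_0})\,\nabla\theta_j(x),\qquad x\in B_{i_0}\setminus\Omega_b,
\]
where $J_{i_0}=\{j:2B_j\cap 2B_{i_0}\neq\emptyset\}$. The paper takes $\widehat m_{i_0}$ to be the average of $u$ over an explicit \emph{Lipschitz} half--ball $\widehat D_{i_0}\subseteq 6B_{i_0}\cap\Omega_b$ that contains all the averaging sets for $j\in J_{i_0}$; Poincar\'e on $\widehat D_{i_0}$ (with constant $C(N)\delta$, since it is a fixed Lipschitz model rescaled) then yields
\[
|c_j-\widehat m_{i_0}|^2\;\leq\;\frac{C(N)}{\delta^{N-2}}\int_{6B_{i_0}\cap\Omega_b}|\nabla u|^2,
\]
and this exactly cancels the $1/\delta^2$ from $|\nabla\theta_j|^2$ and the $\delta^N$ from $|2B_{i_0}|$, giving $\int_{B_{i_0}\setminus\Omega_b}|\nabla\tilde u|^2\leq C\int_{6B_{i_0}\cap\Omega_b}|\nabla u|^2$. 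The same estimate handles the cross terms on $\Omega_a\cap\Omega_b$. Without this step your bound on the gradient contribution from $\Omega_a\setminus\Omega_b$ fails.

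A secondary issue: you invoke a Poincar\'e inequality on $B(x_i,2r)\cap\Omega_b$ with constant proportional to $r$, citing Proposition~\ref{embedding}. That proposition gives Sobolev embeddings on the whole of $\Omega$ with constants depending on $r_0$ and $\diam(\Omega)$, not on $r$; extracting a local Poincar\'e with the correct scaling from it requires a separate argument. The paper sidesteps this entirely by taking the averages $m_i$ over small balls $D_i=B(Y_i,\delta)$ lying \emph{fully inside} $\Omega_b$, and applying Poincar\'e only on the Lipschitz half--ball $\widehat D_{i_0}$, where the scale--$\delta$ constant is elementary.
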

\begin{proof}
The goal of the first part of the proof is to construct a function $\tilde u$ which only differs from $u$ in a narrow strip close to the boundary: this is done by relying on a covering argument similar to those in~\cite{lm} (see Lemma 9 in there). The proof is then concluded by relying on Theorem~\ref{uestim} and Proposition~\ref{propdecayneum}.  The details are organized in the following steps. \\
{\sc $\diamond$ Step 1.} We construct a partition of unity. First, we observe that~\eqref{e:boundsdh} implies that $\{ B(x, 2 \delta) \}_{x \in \partial \Omega_b}$ is a covering of $\Omega_a \triangle \Omega_b$ and by relying on Lemma~\ref{cov1} we can find a finite set $I$, such that (i) $\sharp I \leq C (N)L / \delta^{N-1}$; (ii) $\{ B(x_i, 5\delta/2 ) \}_{i \in I}$ is a covering of $\Omega_a \triangle \Omega_b$. 
To simplify the exposition, in the following we use the notations
$$
   B_i: = B(x_i, 5\delta/2 ) \qquad 2 B_i : = B(x_i, 5\delta ) \qquad 6 B_i : = B(x_i, 15 \delta )  \qquad 
   W = \bigcup_{i=1}^{\sharp I} 2 B_i.
$$
Note that property~\eqref{e:disjoint} in the statement of Lemma~\ref{cov1} implies that, for every $x \in \R^N$, $\sharp \{ i : \; x \in  2B_i \} \leq C (N)$ and hence, in particular, that, for every integrable function $v$,  
\begin{equation}
\label{e:bdcover}
        \sum_{i=1}^{\sharp I} \int_{2 B_i} |v (x)| dx \leq C (N) \int_{\bigcup_i 2 B_i}  |v(x)| dx. 
\end{equation}
Next, we apply Lemma~\ref{thetazero} with $r:= 5 \delta/2$ and we obtain Lipschtz continuous functions $\theta_0, \theta_1, \dots, \theta_{\sharp I}:  \R^N \to [0, 1]$ satisfying {
\begin{equation}
\label{e:thetas} 
\begin{split}
&     |\nabla \theta_i (x) | \leq \frac{C(N)}{\delta} \; a.e. \, x \in \R^N, \quad i= 0, \dots, \sharp I \\
&        \theta_0  (x) =0  \; \; \mathrm{if} \;
        x \in \bigcup_{i \in I} B_i, \qquad \quad
        \theta_0  (x) =1\; \; \mathrm{if} \;
        x \in  \R^N \setminus \bigcup_{i \in I} 2 B_i \\
        &         \theta_i  (x) =0 \; \; \mathrm{if} \; x \in  \R^N \setminus  2 B_i, \quad     
         i=1, \dots, \sharp I, \qquad \quad \sum_{i=0}^{\sharp I} \theta_i(x) =1 
        \; \; \text{for every}\;  x \in \R^N.  \\
  \end{split}
  \end{equation}}

\noindent {\sc $\diamond$ Step 2.}  We define the function $\tilde u$.    For $i \in I$ we term $Y_i$ the point in $\Omega_b\cap 2 B_i$ such that $d(Y_i, x_i )=3\delta $ and  the vector $Y_i-x_i$ is orthogonal to $P(x_i,5\delta)$. Note that such a point exists provided $5 \delta \leq r$ and $\varepsilon \leq 3/10$ {due to Lemma 5 in~\cite{GeomPart} (see also Proposition 2.2 in \cite{hm3}}). Then we define the domain $D_i:=B(Y_i,\delta)\subseteq \Omega_b\cap 2 B_i$ and we set 
$$
    m_i : = \frac{1}{|D_i |} \int_{D_i} u(x) dx \qquad i=1, \dots, \sharp I.
$$
Note that $({\bold 1}_{\Omega_b}u(x))\theta_0(x)$ is well defined for $x\in \R^N$ and belongs to $H^{1}(\R^N)$ because $\theta_0(x)=0$ in a neighborhood of $\partial \Omega_b$. This allows us to define

 \begin{equation}
 \tilde u (x) : =    ({\bold 1}_{\Omega_b} u(x))\theta_0(x) + \sum_{i=1}^{\sharp I} m_i \theta_i (x) \quad \forall x \in   \R^N, \notag
 \end{equation}
so that $\tilde u \in H^1(\Omega_a)$. 

{\sc $\diamond$ Step 3.}  We provide an estimate on $\| \tilde u  {\bold 1}_{\Omega_a} - u {\bold 1}_{\Omega_b} \|^2_{L^2 (\R^N)}$. { First, we point out that $\Omega_a \triangle \Omega_B \subseteq W$ and that  
$\tilde u  {\bold 1}_{\Omega_b} = u {\bold 1}_{\Omega_a}$ in $\R^N \setminus W$, so by recalling the definition of $\tilde u$ we have 
\begin{equation*}
\begin{split}
         \| 
         \tilde u  {\bold 1}_{\Omega_a} - u {\bold 1}_{\Omega_b} 
         \|^2_{L^2 (\R^N)} = &
         \int_{ \Omega_a \cap \Omega_b \cap W} |\tilde u - u|^2 (x) dx +
         \int_{ \Omega_b \setminus \Omega_a} u^2 (x) dx + 
          \int_{ \Omega_a \setminus \Omega_b} \tilde u^2 (x) dx  \\
          & \leq   2 \int_{W \cap \Omega_b} u^2 (x) dx + 
           2   \int_{W}  \left( \sum_{i=1}^{\sharp I} m_i \theta_i (x) \right)^2 dx 
\end{split}
\end{equation*}
By using the convexity of the square function, Jensen inequality and estimate~\eqref{e:bdcover} we have
\begin{equation*}
\begin{split}
       \int_{W}  \left( \sum_{i=1}^{\sharp I} m_i \theta_i (x) \right)^2 dx  &  \leq
            \int_{W}   \sum_{i=1}^{\sharp I} m_i^2  \theta_i (x)  dx 
            \leq 
           \int_{W}   \sum_{i=1}^{\sharp I}\frac{1}{|D_i|} \left( \int_{D_i} u^2 (y) dy\right) 
            \theta_i (x)  dx \\&  \leq  
          \sum_{i=1}^{\sharp I} \frac{1}{\omega_N \delta^N} \left( \int_{D_i} u^2 (y) dy\right)  \int_{2 B_i}
          \theta_i(x) dx \\
          & \leq   
           \sum_{i=1}^{\sharp I} \frac{1}{\omega_N \delta^N} \left( \int_{2 B_i \cap \Omega_b} u^2 (y) dy\right) 
           \omega_N 5^N \delta^N \leq C(N) \int_{\bigcup_i 2 B_i \cap \Omega_b}  u^2 (y) dy. \\
\end{split}
\end{equation*}}
In the previous expression, $\omega_N$ denotes as usual the Lebesgue measure of the unit ball in $\R^N$. By combining the previous two estimates we conclude that
\begin{equation}
\label{e:chil2}
        \| \tilde u  {\bold 1}_{\Omega_a} - u {\bold 1}_{\Omega_b} 
         \|^2_{L^2 (\R^N)} \leq C(N) \int_{W}  u^2 (y) dy
\end{equation}
{\sc $\diamond$ Step 4.} We introduce some notations we need in {\sc Step 5}. Given $i_0 \in I$ we denote by $J_{i_0}$ be the finite set of indices $j \in I$ such that $2B_{j} \cap 2B_{i_0}\not = \emptyset$. Note that $\sharp J_{i_0} \leq C(N)$  by property~\eqref{e:disjoint} in the statement of Lemma~\ref{cov1} and, also, that any ball $2 B_{j}$ is contained in $6B_{i_0}$ if $j \in J_{i_0}$. 

Let $P_0$ be the hyperplane in $6 B_{i_0}$ provided by the definition of Reifenberg flatness and let $\nu_0$ denote its unit normal vector, oriented in such a way that $x_{i_0} + 15 \delta \nu_0 \in \Omega_b$. Also, let $Y_j$ and $D_j$ be as in {\sc Step 2}, and let $P_j$ denote the hyperplane $P(x_j, 5 \delta)$. For any $j \in J_{i_0}$ we have 
$$
    d_H(P_j  \cap 2B_j,P_0\cap 2B_j)\leq d_H(P_j \cap 2B_j ,\partial \Omega_b\cap 2B_j) + d_H(\partial \Omega_b\cap 2B_j,P_0\cap 2B_j )\leq 5\delta\varepsilon + 15\delta \varepsilon.
$$
Hence, 
$$d(Y_j,P_0)\geq d(Y_j,P_j)-d_H(P_j \cap 2B_j ,P_0\cap 2B_j )\geq 3\delta - 20 \varepsilon \delta\geq 2\delta,$$ 
provided that $\varepsilon \leq 1/20$. 
This shows that 
$$\bigcup_{j \in J_{i_0}} D_j \subseteq \widehat{D}_{i_0} := B(x_{i_0},15\delta) \cap \{ x\; ;\;  (x-x_{i_0}) \cdot \nu_0  \geq \delta \}\subseteq \Omega_b,$$
where the last inclusion holds by {Lemma 5 in~\cite{GeomPart} (see also Proposition 2.2 in \cite{hm3})} since $30 \varepsilon \delta \leq \delta$.
 
The key point in this construction is that $\widehat{D}_{i_0}$ is a Lipschitz domain and satisfies the Poincar\'e-Sobolev inequality with constant $C(N) \delta$. Hence, by setting  
$$ \widehat{m}_{i_0}:=\frac{1}{|\widehat{D}_{i_0}|}\int_{\widehat{D}_{i_0}}u(y) \;dy$$
and by using Jensen inequality we get that, for every $j\in J_{i_0}$, we have 
{
\begin{equation}
\label{mimx}
\begin{split}
|m_j-\widehat{m}_{i_0}|^2 & =
\left( \frac{1}{|D_j|}\int_{D_j}(u (x) -\widehat{m}_{i_0})dx \right)^2 
\leq \frac{1}{|D_j|}\int_{D_j}|u (x) -\widehat{m}_{i_0}|^2 dx \\
&  
\leq \frac{1}{|D_j|}\int_{\widehat{D}_{i_0}}|u (x) -\widehat{m}_{i_0}|^2 dx 
 \leq \frac{C(N) \delta^2}{|D_j|}\int_{\widehat{D}_{i_0}}|\nabla u(x) |^2 dx \\
& 
\leq  \frac{C(N)}{\delta^{N-2}}\int_{6B_{i_0} \cap \Omega_b}|\nabla u(x)|^2 dx. \\
\end{split}
\end{equation}}
On the other hand, by definition of $\widehat D_{i_0}$, we have that, for any $y \in 6B_{i_0}\cap \Omega_b\setminus \widehat D_{i_0}$,   
$$d (y,\partial \Omega_b)\leq d(y,P_0)+d_H(P_0\cap 6B_{i_0}, \partial \Omega_b\cap 6B_{i_0})\leq   \delta + { 6}\varepsilon \delta\leq 2\delta,$$
which implies that $y\in \bigcup_{x\in�\partial \Omega_b}B(x,2\delta)$. In particular, 
$$2B_{i_0}\cap\Omega_b\setminus  \widehat D_{i_0}\subseteq \bigcup_{x\in \partial \Omega_b}B(x,2\delta) \subseteq \bigcup_{i \in I }B_i$$
and this implies that 
$ \mathrm{supp}(\theta_0)\cap 2B_{i_0}\subseteq \widehat D_{i_0}.$ \\
 {\sc $\diamond$ Step 5.}  We provide an estimate on 
$\| {\bf 1}_{\Omega_a}  \nabla \tilde u - {\bf 1}_{\Omega_b}  \nabla u \|_{L^2 (\R^N)}$. 
First, we recall that ${\Omega_a \triangle \Omega_b \subseteq W}$ and we observe that 
\begin{equation}
\label{e:grad}
\begin{split}
    \| {\bf 1}_{\Omega_a}  \nabla \tilde u - {\bf 1}_{\Omega_b}  \nabla u \|^2_{L^2 (\R^N)}  
     &  = 
   \int_{\Omega_a \cap \Omega_b \cap W} | \nabla \tilde u -  \nabla u  |^2 (x) dx +
   \int_{\Omega_a  \setminus \Omega_b}  | \nabla \tilde u  |^2 (x) dx + 
   \int_{\Omega_b  \setminus \Omega_a}  | \nabla  u  |^2 (x) dx \\
   & \leq  
   2  \int_{W\cap \Omega_b}  | \nabla  u  |^2 (x) dx +
   2 \int_{\Omega_a \cap \Omega_b \cap W} | \nabla \tilde u  |^2 (x) dx +
     \int_{\Omega_a  \setminus \Omega_b}  | \nabla \tilde u  |^2 (x) dx .  
\end{split}
\end{equation}
{The first term in the last line of the above expression satisfies
\begin{equation}
\label{e:one}
          2  \int_{W\cap \Omega_b}  | \nabla  u  |^2 (x) dx 
          \leq \sum_{i \in I} \int_{ B_i \cap \Omega_b } | \nabla  u  |^2 (x) dx 
          \leq \sum_{i \in I} \int_{ 6 B_i \cap \Omega_b } | \nabla  u  |^2 (x) dx . 
\end{equation}
To establish an estimate on the second term in~\eqref{e:grad}, we start by observing that, if $x \in \Omega_b $, then 
  \[
 \tilde u (x) : =     u(x)\theta_0(x) + \sum_{j \in I} m_i \theta_j (x). 
 \]
Next, we fix $i_0 $ in such a way that $ x \in B_{i_0}$ and we observe that, 
since $\nabla \theta_0+\sum_{j \in I}\nabla \theta_j =0$, we have 
\begin{eqnarray*}
\nabla \tilde u(x)&=&\theta_0(x)\nabla u(x)+\nabla \theta_0(x) u(x)+\sum_{j \in J_{i_0}}m_j\nabla \theta_j (x) \\
&=& \theta_0(x)\nabla u(x)+\underbrace{(u(x)-\widehat{m}_{i_0})\nabla \theta_0(x) }_{f_1}+\underbrace{\sum_{j \in J_{i_0}}(m_j -\widehat{m}_{i_0}) \nabla \theta_j(x)}_{f_2} .
\end{eqnarray*}
Next, we point out that, for any $i=1, \dots, \sharp I$ we have   
\begin{equation}
\label{e:fz}
\int_{2B_{i}\cap \Omega_b}| \theta_0(x)\nabla u(x)|^2 dx \leq \int_{2B_{i}\cap \Omega_b}|\nabla u(x)|^2 dx.
\end{equation}
Also, we recall that  $ \mathrm{supp}(\theta_0)\cap 2B_{i_0}\subseteq \widehat D_{i_0}$ and that $ |\nabla \theta_{i_0}| \leq  C(N)/ \delta$. We then recall that the Poicar\'e-Sobolev constant of $\widehat{D}_{i_0}$ is bounded by $C(N) \delta$ and by combining these  observations we get
\begin{equation}
\label{e:f11} 
\begin{split}  
\int_{2B_{i_0}\cap \Omega_b}|f_1(x)|^2& \leq \frac{C(N)}{\delta^2}\int_{\widehat{D}_{i_0}}|u(x)-\widehat{m}_{i_0}|^2 
\leq C(N) \int_{\widehat{D}_{i_0}}|\nabla u|^2 \\ &
\leq C(N) \int_{6B_{i_0}\cap \Omega_b}|\nabla u|^2 .  \\
\end{split}
\end{equation}
Finally, by recalling~\eqref{mimx} we have 
\begin{equation}
\label{e:f21}
\int_{2B_{i_0} \cap \Omega_b} |f_2|^2dx \leq  C(N) \int_{2B_{i_0} \cap \Omega_b} \sum_{j \in J_{i_0}}\frac{1}{\delta^2}(m_j -\widehat{m}_{i_0})^2\leq C(N) \int_{6B_{i_0}\cap \Omega_b}|\nabla u|^2 dx
\end{equation}
and by combining~\eqref{e:fz},~\eqref{e:f11} and~\eqref{e:f21} we infer   
\begin{equation}
\label{gradchi2}
         \int_{\Omega_a \cap \Omega_b \cap W} | \nabla \tilde u  |^2 (x) dx \leq 
         C(N) \sum_{i \in I} 
         \int_{6B_{i} \cap \Omega_b} |\nabla u|^2 dx.
         \end{equation}
To provide a bound on the third term in~\eqref{e:grad}, we observe that, if $x \in \Omega_a \setminus \Omega_b \subseteq \displaystyle{\bigcup_{i \in I} B_i}$, then 
 \[
 \tilde u (x) =      \sum_{i \in I} m_i \theta_i (x)  \qquad \theta_0(x) =0.
 \]
Hence, if we choose $i_0$ in such a way that $x \in B_{i_0}$, we get
$$
    \nabla \tilde u(x)=\sum_{j \in J_{i_0}}m_j \nabla \theta_j (x)= \sum_{j \in J_{i_0}}(m_j -\widehat{m}_{i_0})\nabla \theta_j (x).
$$ 
By arguing as in~\eqref{e:f21} we get 
$$
   \int_{B_{i_0}\setminus  \Omega_b} |\nabla \tilde u|^2 dx \leq C (N) \int_{6B_{i_0}\cap \Omega_b} |\nabla u|^2 dx,
$$
which implies 
\begin{equation}
\label{e:out}
    \int_{\Omega_a \setminus \Omega_b} |\nabla \tilde u|^2 (x) dx \leq 
    \sum_{i \in I} \int_{B_i \setminus \Omega_b} |\nabla \tilde u|^2 (x) dx \leq 
    C (N) \sum_{i \in I} \int_{6B_{i}\cap \Omega_b} |\nabla u|^2 dx. 
\end{equation}
Finally, by combining~\eqref{e:grad},~\eqref{e:fz},~\eqref{gradchi2} and~\eqref{e:out} we conclude that
\begin{equation}
\label{e:finalchi}
           \| {\bf 1}_{\Omega_a}  \nabla \tilde u - {\bf 1}_{\Omega_b}  \nabla u \|^2_{L^2 (\R^N)}  \leq  C (N) \sum_{i \in I} \int_{6B_{i}\cap \Omega_b} |\nabla u|^2 dx. 
\end{equation}
}
 {\sc $\diamond$ Step 6.} We conclude the proof of the Proposition by relying on Propositions~\ref{uestim} and~\ref{propdecayneum}. 
 
First, by combining Proposition~\ref{uestim} and~\eqref{e:chil2} we get
\begin{equation}
\label{e:final2}
\begin{split}
       \| \tilde u  {\bold 1}_{\Omega_a} - u {\bold 1}_{\Omega_b} 
         \|^2_{L^2 (\R^N)} & \leq 
         C(N) \sum_{i \in I} \int_{2 B_i \cap \Omega_b} u^2 (x) dx 
         \leq C(N) \sum_{i \in I}  \delta^N \| u\|^2_{L^{\infty} (\Omega_b)} \\
         & \leq 
          C(N, r_0, \diam(\Omega_b)) 
         \delta^N (1 + \sqrt{\mu} )^{2 \gamma(N)}
          \| u\|^2_{L^2 (\Omega_b)} \sharp I \\
         & \leq  C(N, r_0, \diam(\Omega_b)) L
         (1 + \sqrt{\mu} )^{2 \gamma(N)}\delta
          \| u\|^2_{L^2 (\Omega_b)} .   
          \end{split}
\end{equation} 
Next, we combine Proposition~\ref{propdecayneum} with~\eqref{e:finalchi} and we obtain
\begin{equation}
\label{e:finalg}
\begin{split}
       \| {\bf 1}_{\Omega_a}  \nabla \tilde u - {\bf 1}_{\Omega_b}  \nabla u \|^2_{L^2 (\R^N)} & \leq 
         C \mu (1 + \sqrt \mu )^{2 \gamma} \delta^{N-\eta} \| u \|^2_{L^2(\Omega_b)}
         \sharp I
         \leq 
         C L 
         \mu (1 + \sqrt \mu )^{2 \gamma} \delta^{\alpha} \| u \|^2_{L^2(\Omega_b)},
          \end{split}
\end{equation} 
provided that $\alpha= 1 - \eta$. In the previous expression, $C = C (N, r_0, \alpha, \diam(\Omega_b))$. By combining~\eqref{e:final2} and~\eqref{e:finalg} we conclude the proof. 
\end{proof}

\subsection{Conclusion of the proof of Theorem~\ref{main2}}
We finally conclude the proof of Theorem \ref{main2}.
\begin{proof}[Proof of Theorem \ref{main2}]  
By comparing~\eqref{e:projneu} with Proposition~\ref{estimproj2} and by arguing as in~\eqref{e:space} we get that the hypotheses of Lemma~\ref{l:ab} are satisfied provided that 
${A= B= n C (1 + \mu) ^{2 \gamma (N) +2}L \delta^\alpha}$, and hence by repeating the same argument as in the Dirichlet case we conclude. 
\end{proof}
\section{Acknowledgements} 

The authors wish to thank Dorin Bucur and Giuseppe Buttazzo for interesting conversations, and Michiel van den Berg for the reference quoted in Proposition \ref{propinfty1}. Also, L. V. Spinolo wishes to thank Giuseppe Savar\'e for having stimulated her interest in the topic, and for many useful discussions.  

E. Milakis was supported by the Marie Curie International Reintegration Grant No 256481 within the 7th European Community Framework Programme. Part of this work was done while A. Lemenant and L. V. Spinolo were both affiliated to the E. De Giorgi Research Center, Scuola Normale Superiore, Pisa, Italy, and when L.V. Spinolo was affiliated to the University of Zurich, Switzerland. Finally, L. V. Spinolo wishes to thank the Universit\'e Paris 7 and the Laboratoire J.L. Lions for supporting her visit, during which part of this work was done.
\bibliographystyle{plain}
\bibliography{biblio_eigen}



\begin{tabular}{l}
Antoine Lemenant\\
Universit\'e Paris Diderot - Paris 7 - LJLL - CNRS \\
U.F.R de Math\'ematiques \\
Site Chevaleret Case 7012\\
75205 Paris Cedex 13 FRANCE\\
{e-mail : \small \tt lemenant@ljll.univ-paris-diderot.fr}
\end{tabular}
\vspace{2em}

\begin{tabular}{l}
Emmanouil Milakis\\ 
University of Cyprus \\ 
Department of Mathematics \& Statistics \\ 
P.O. Box 20537\\
Nicosia, CY- 1678 CYPRUS\\
 {e-mail : \small \tt emilakis@ucy.ac.cy}
\end{tabular}
\vspace{2em}

\begin{tabular}{l}
Laura V. Spinolo\\
IMATI-CNR, \\
via Ferrata 1 \\
I-27100, Pavia, ITALY  \\
{e-mail : \small \tt spinolo@imati.cnr.it}
\hfill
\end{tabular}

\end{document}